\renewcommand{\d}{\mathrm d}
\newcommand{\R}{\mathbb R}
\newcommand{\Z}{\mathbb Z}
\newcommand{\wt}{\widetilde}
\newcommand{\wh}{\widehat}
\newcommand{\ol}{\overline}
\newcommand{\eps}{\varepsilon}
\newcommand{\F}{\mathcal F}
\newcommand{\A}{\mathfrak A}
\renewcommand{\O}{\mathcal{O}}
\renewcommand{\P}{\mathbf P}
\newcommand{\e}{\mathrm e}
\renewcommand{\o}{\mathrm o}
\newcommand{\Be}{\mathrm{RW}}
\newcommand{\Br}{\mathrm{Br}}
\newcommand{\epi}{\mathfrak e}
\newcommand{\LSC}{\mathrm{LSC}}
\newtheorem{proposition}{Proposition}[section]
\newtheorem{theorem}[proposition]{Theorem}
\newtheorem{lemma}[proposition]{Lemma}
\newtheorem{definition}[proposition]{Definition}
\newtheorem{conjecture}[proposition]{Conjecture}
\theoremstyle{definition}
\numberwithin{equation}{section}
\title{The geometry of coalescing random walks, the Brownian web distance and KPZ universality}
\author{B\'alint Vet\H o\thanks{Department of Stochastics, Institute of Mathematics, Budapest University of Technology and Economics
and HUN-REN--BME Stochastics Research Group, M\H uegyetem rkp.\ 3., H-1111 Budapest, Hungary. E-mail: {\tt vetob@math.bme.hu}}
\and
B\'alint Vir\'ag\thanks{Departments of Mathematics and Statistics, University of Toronto, Canada. E-mail: {\tt balint@math.toronto.edu}}}
\begin{document}

\maketitle

\begin{abstract}
Coalescing simple random walks in the plane form an infinite tree.
A natural directed distance on this tree is given by the number of jumps between branches when one is only allowed to move in one direction.
The Brownian web distance is the scale-invariant limit of this directed metric.
It is integer-valued and has scaling exponents $0:1:2$ as compared to $1:2:3$ in the KPZ world.
However, we show that the shear limit of the Brownian web distance is still given by the Airy process.
We conjecture that our limit theorem can be extended to the full directed landscape.
\end{abstract}

\section{Introduction}

\subsection{The discrete web distance}

Consider a system of coalescing random walks $Y$ on the even points
\begin{equation}
\Z^2_\e=\{(i,n)\in\Z^2:i+n\mbox{ is even}\}
\end{equation}
 of the planar lattice
so that
from each $(i,n)\in\Z^2_\e$ there are two outgoing directed edges: to $(i+1,n-1)$ and to $(i+1,n+1)$.
Assign independent random variables $(\xi_{(i,n)})$ with $\P(\xi_{(i,n)}=1)=\P(\xi_{(i,n)}=-1)=1/2$ to the vertices in $\Z^2_\e$.
The \emph{random walk web} $Y$ is a family of coalescing random walks starting at each point of $\Z^2_\e$.
For all $(i,n)\in\Z^2_\e$, set $Y_{(i,n)}(i)=n$ and let
\begin{equation}\label{defY}
Y_{(i,n)}(j+1)=Y_{(i,n)}(j)+\xi_{(j,Y_{(i,n)}(j))}
\qquad\mbox{ for all }j=i,i+1,\dots.
\end{equation}

For any $(i,n;j,m)\in\Z^4$ we define $D^\Be(i,n;j,m)$ to be the smallest integer $k$ such that $(j,m)$ can be reached from $(i,n)$
by following the directed random walk paths in the random walk web $Y$ and by performing $k$ jumps between different random walk paths
along all possible directed paths from $(i,n)$ to $(j,m)$ in the graph $\Z^2_\e$.
The value of $D^\Be(i,n;j,m)$ is infinite if there are no such paths, e.g.\ if $(i,n;j,m)\in\Z^4\setminus\Z^4_\e$.
This defines a directed metric of positive sign in the sense introduced in~\cite{DV21}
hence we call the function $D^\Be$ as \emph{random walk web distance} or \emph{discrete web distance}.

\begin{definition}
For any $(i,n,j,m)\in\Z^4$ let $D^\Be(i,n;j,m)$ be the smallest non-negative integer $k$ such that there are $(i_1,n_1),\dots,(i_k,n_k)\in\Z^2_\e$
with the following property.
There are random walk paths in $Y$ from $(i,n)$ to $(i_1,n_1)$, from $(i_l+1,n_l-\xi_{(i_l,n_l)})$ to $(i_{l+1},n_{l+1})$ for $l=1,\dots,k-1$ and
from $(i_k+1,n_k-\xi_{(i_k,n_k)})$ to $(j,m)$.
We set $D^\Be(i,n;j,m)=\infty$ if there is no such $k$.
\end{definition}

For the function $D^\Be$ it holds that $D^\Be(i,n;i,n)=0$ for all $(i,n)\in\Z^2_\e$
and it satisfies the triangle inequality: for all $(i,n),(k,l),(j,m)\in\Z^2_\e$
\begin{equation}\label{DBtriangle}
D^\Be(i,n;j,m)\le D^\Be(i,n;k,l)+D^\Be(k,l;j,m)
\end{equation}
holds.
Hence $D^\Be$ is a directed metric of positive sign on $\Z^2_\e$ according to Definition 5.1 in~\cite{DV21}.
We mention that $D^\Be(i,n;j,m)$ can alternatively be defined as a first passage time between $(i,n)$ and $(j,m)$
where the passage time of an edge is 0 if it is used by some random walk in $Y$, and 1 otherwise.

The paths of coalescing random walks in $Y$ form a subset of the geodesics of this first passage percolation model and those of the discrete web
distance $D^\Be$.

\subsection{The Brownian web distance}

As the system of coalescing random walks converges to their Brownian counterparts, the discrete web distance also has a limit.
It turns out that the limiting object can be defined based on the Brownian web and its dual, hence we call it the \emph{Brownian web distance}.

Heuristically, the Brownian web contains independent coalescing Brownian motions starting at each point of the space-time $\R^2$.
Based on the work of Arratia~\cite{A81}, the Brownian web was first rigorously constructed by T\'oth and Werner~\cite{TW98}.
In their work, the Brownian web describes the local time profile for the true self-repelling motion.
The term Brownian web first appeared in~\cite{FINR04}.

Let $\Pi_0$ be the space of all continuous functions defined on arbitrary intervals of the type $[t_0,\infty)$.
The Brownian web $B$ is a random closed subset of $\Pi_0$.
It has the property that if $\gamma$ defined on $[t,\infty)$ is in $B$,
then so is $\gamma$ restricted to any interval $[s,\infty)$ for $s\ge t$.
Section \ref{s:BW} contains precise definitions.
It also describes the Hausdorff topology on subsets of a compactification of $\Pi_0$ that gives the Borel $\sigma$-field for the corresponding measure space.

A path $\gamma$ in $B$ defined on $[t,\infty)$ with $\gamma(t)=x$ is called an outgoing path from $(t,x)$.
With probability 1, almost all $(t,x)\in\R^2$ have a unique outgoing path denoted by $B_{(t,x)}$.
Also, almost surely there exist some exceptional points with multiple outgoing paths.
For one type of exceptional points referred to as type $(1,2)$ points with one incoming and two outgoing paths
exactly one of the outgoing paths is the continuation of the incoming path.
This outgoing path of the type $(1,2)$ point $(t,x)$ is denoted by $B_{(t,x)}$ and declared to be the Brownian web path starting from $(t,x)$.
For other types of exceptional points with multiple outgoing paths a unique $B_{(t,x)}$ is not defined.
See Section~\ref{s:BW} for more details.
The Brownian web distance can be introduced as an integer valued function as follows.

\begin{definition}\label{def:DBr}
For any $(t,x,s,y)\in\R^4$ let $D^\Br(t,x;s,y)$ denote the smallest $k\ge 0$
for which there exist points $(t,x)=(t_0,x_0),(t_1,x_1),\dots,(t_{k+1},x_{k+1})=(s,y)\in\R^2$ with $t_0\le t_1<t_2<\dots<t_{k+1}$
and a continuous path $\pi:[t,s]\to\R$ with $\pi(t_i)=x_i$ for $i=0,\dots,k+1$ so that for each $i=0,\dots,k$ there is a path $\gamma_i$ in the Brownian web
with $\pi(r)=\gamma_i(r)$ for all $r\in [t_i,t_{i+1}]$ and the choice of $\gamma_0$ is specified as follows.
It must hold that $\gamma_0=B_{(t,x)}$ if $(t,x)$ is a type $(1,2)$ point
and $t_0=t_1$ must hold if $(t,x)$ is any other exceptional point with multiple outgoing paths.
We set $D^\Br(t,x;s,y)=+\infty$ if there is no such $k\ge 0$.
\end{definition}

We explain some properties of the Brownian web distance that are investigated in this paper.
The scale invariance with exponents $0:1:2$ is an immediate consequence of Brownian scaling.
The value of the exponents is in contrast with the KPZ scaling exponents $1:2:3$, see, for example, Section 2.1 in~\cite{G21}.

\begin{proposition}
For any $\alpha>0$, we have the equality in distribution
\begin{equation}
\left(D^\Br(\alpha^2 t,\alpha x;\alpha^2 s,\alpha y),(t,x;s,y)\in\R^4\right)
\stackrel{\d}{=}\left(D^\Br(t,x;s,y),(t,x;s,y)\in\R^4\right).
\end{equation}
\end{proposition}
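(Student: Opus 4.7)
The plan is to reduce the claim to the scale invariance of the Brownian web itself, together with the observation that Definition~\ref{def:DBr} depends on $B$ only through its path structure. For $\alpha>0$, introduce the diffusive rescaling $\Phi_\alpha(t,x)=(\alpha^2 t,\alpha x)$, acting on continuous paths by $(\Phi_\alpha\pi)(u)=\alpha\,\pi(u/\alpha^2)$. Standard Brownian scaling, applied jointly to the family of coalescing Brownian motions started from every space-time point, shows that the image web $\Phi_\alpha B$ obtained by transporting every trajectory of $B$ through $\Phi_\alpha$ has the same law as $B$. This is a well-known property of the Brownian web as constructed in~\cite{TW98,FINR04}.

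The second step is to verify that Definition~\ref{def:DBr} transforms covariantly under $\Phi_\alpha$. Suppose $D^\Br(t,x;s,y)\le k$ is witnessed on $B$ by jump times $t<t_1<\dots<t_k<s$ and a continuous path $\pi:[t,s]\to\R$ whose restriction to each piece $[t,t_1],\dots,[t_k,s]$ coincides with a trajectory of $B$. Then the rescaled times $\alpha^2 t<\alpha^2 t_1<\dots<\alpha^2 t_k<\alpha^2 s$ together with the path $\Phi_\alpha\pi$ witness $D^\Br(\alpha^2 t,\alpha x;\alpha^2 s,\alpha y)\le k$ on $\Phi_\alpha B$, and this correspondence is a bijection between admissible witnesses. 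Consequently, pathwise,
\[D^\Br(\alpha^2 t,\alpha x;\alpha^2 s,\alpha y)\bigl[\Phi_\alpha B\bigr]=D^\Br(t,x;s,y)\bigl[B\bigr],\]
an identity valid simultaneously for all $(t,x;s,y)\in\R^4$ and every realisation of $B$.

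Combining this pathwise identity with the distributional equality $\Phi_\alpha B \stackrel{\d}{=} B$ from the first step immediately yields the claimed equality of the two random fields indexed by $\R^4$. No serious obstacle is expected: the only delicate point is to ensure that the scale invariance of the Brownian web holds jointly as a distributional equality of the whole random collection of paths (not merely at the level of individual finite-dimensional marginals), and that the admissibility condition ``$\pi$ agrees on a subinterval with an outgoing trajectory of $B$'' is preserved by $\Phi_\alpha$. Both are standard consequences of the construction of~\cite{FINR04} and require no new estimates.
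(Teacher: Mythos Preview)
Your argument is correct and is essentially the same as the paper's, which does not give a formal proof but merely states before the proposition that ``the scale invariance with exponents $0{:}1{:}2$ is an immediate consequence of Brownian scaling.'' You have simply made explicit the two ingredients implicit in that remark: the distributional invariance $\Phi_\alpha B\stackrel{\d}{=}B$ of the full web, and the deterministic covariance of Definition~\ref{def:DBr} under $\Phi_\alpha$.
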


A model related to the Brownian web distance is the Brownian castle
which arises as the scaling limit of the infinite temperature version of the ballistic deposition model.
It was introduced in~\cite{CH23} as a scale-invariant Markov process with scaling exponents $1:1:2$
which is different from the one in the Edwards--Wilkinson class as well as from the KPZ class.
The Brownian castle is constructed based on the Brownian web and its dual in the following way.
To each segment of path in the dual Brownian web a centered Gaussian random variable is associated
with variance equal to the length of the corresponding time interval and independently for disjoint paths.
Then the value of the process at a space-time point is the sum of the Gaussian variables along the dual Brownian web path started from the point.

As one may expect, the value of the Brownian web distance $D^\Br(t,x;s,y)$ can change dramatically
by a small perturbation of the starting point $(t,x)$ or by that of the endpoint $(s,y)$.
As we shall see from the description given in Subsection~\ref{ss:BWdistregions} the Brownian web distance is more sensitive to the changes of the endpoint.
Hence the Brownian web distance is not a continuous function of its variables.
However the Brownian web distance has a lower semicontinuous version $D^{\Br,\LSC}(t,x;s,y)$
which only differs from the Brownian web distance $D^\Br(t,x;s,y)$ when $(t,x)$ is an exceptional point in the Brownian web with multiple outgoing paths.
See Definition~\ref{def:DBrLSC} and Proposition~\ref{prop:DBrLSC} for details.

As a consequence, the lower semicontinuous Brownian web distance $D^{\Br,\LSC}$ is a random variable
with values in the space of lower semicontinuous functions on $\R^4$.
Lower semicontinuity of functions is equivalent with having a closed epigraph.
The natural metric is a partially compactified version of the Hausdorff distance between the epigraphs of the functions
where the range of the functions is mapped to a compact interval.
Then the space of lower semicontinuous functions becomes a separable metric space.

For an integer $n$ and $(t,x,s,y)\in \mathbb R^4$ we define the rescaled discrete web distance as
\begin{equation}\label{defDRWn}
D^\Be_n(t,x;s,y)=\begin{cases}
D^\Be(nt,n^{1/2}x;ns,n^{1/2}y),\quad  &\mbox{if } (nt,n^{1/2}x,ns,n^{1/2}y)\in \mathbb Z^4,\\
\infty, & \mbox{otherwise.}
\end{cases}
\end{equation}
We prove the convergence of the rescaled discrete web distance to the Brownian web distance in the epigraph sense which we define below.
Let $f:\R^4\to\ol\R=\R\cup\{\infty\}$ be a lower semicontinuous function.
The epigraph of $f$ is the set
\begin{equation}
\epi f=\{(x,y)\in\R^4\times\ol\R:y\ge f(x)\}
\end{equation}
which is closed by the lower semicontinuity of $f$.
The epigraph of lower semicontinuous functions are elements of the space $\mathcal E_*$,
the set of all closed subsets $\Gamma\subset\R^4\times\ol\R$ such that $\Gamma\cap(\{x\}\times\ol\R)\neq\emptyset$ for all $x\in\R^4$.
We equip $\mathcal E_*$ with the following version of the Hausdorff topology.
Consider the map $E:\R^4\times\ol\R\to\R^4\times[-1,1]$ given by
\begin{equation}
E(r,s)=\left(r,\frac{|u-v|e^{-|r|}s}{1+|s|}\right)
\end{equation}
for any $(r,s)\in\R^4\times\ol\R$ with $r=(u,v)\in\R^4$ with the convention $E(r,\pm\infty)=(r,\pm|u-v|e^{-|r|})$.
For two elements $e_1,e_2\in\mathcal E_*$ we define their distance $d_*(e_1,e_2)$ to be the Hausdorff distance of the images $E(e_1)$ and $E(e_2)$.
The space $(\mathcal E_*,d)$ is compact by Lemma 7.1 of~\cite{DV21}.

\begin{theorem}\label{thm:DBetoDBr}
There is a coupling of the Brownian web $B$ and the sequence $Y_n$ of random walk webs such that
the epigraphs of the lower semicontinuous functions $D^\Be_n$ on $\R^4$ defined in terms of $Y_n$ converge to the epigraph of $D^{\Br,\LSC}$,
that is, $\epi D^\Be_n\to\epi D^{\Br,\LSC}$ in $\mathcal E_*$ almost surely as $n\to\infty$.
\end{theorem}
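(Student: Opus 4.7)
The plan is to build on the classical convergence of the random walk web to the Brownian web and reduce the epigraph convergence to a Kuratowski--Painlev\'e type statement, which in turn splits into a recovery direction and a lim-inf lower bound.

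I would begin by producing a coupling, via the Skorokhod representation theorem applied to the known convergence of the random walk web to the Brownian web (see~\cite{FINR04}), on which $Y_n\to B$ almost surely in the standard Polish topology for the Brownian web. Under this coupling, for any finite collection of space-time points and lattice approximations converging to them, the outgoing random walk paths in $Y_n$ converge as continuous functions on compact time intervals to the outgoing Brownian web paths, and the coalescing structure is preserved in the limit. Since $(\mathcal E_*,d_*)$ is compact, it suffices to show that every subsequential Hausdorff limit of $\epi D^\Be_n$ almost surely equals $\epi D^\Br$; by the Kuratowski--Painlev\'e characterization this amounts to two inclusions: (A) any limit of sequences $z_n\in\epi D^\Be_n$ lies in $\epi D^\Br$, and (B) every $z\in\epi D^\Br$ is such a limit.

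For (B), I would fix $(t,x,s,y)\in\R^4$ with $k:=D^\Br(t,x;s,y)<\infty$ and take points $(t_1,x_1),\dots,(t_k,x_k)$ and a continuous path $\pi$ realizing the infimum in Definition~\ref{def:DBr}. For each $n$ I would choose nearby lattice points $(t^n,x^n),(t_i^n,x_i^n),(s^n,y^n)$ in the scaled version of $\Z^2_\e$, respecting the parity constraint at a vanishing cost. The almost sure convergence $Y_n\to B$ implies that the random walk paths connecting consecutive points converge to the corresponding Brownian web segments of $\pi$, while switching outgoing direction at each $(t_i^n,x_i^n)$ is always available because every lattice point has two outgoing edges. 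Hence for $n$ large the constructed discrete path witnesses $D^\Be_n(t^n,x^n;s^n,y^n)\le k$; the case of $v> k$ is handled by taking integer $v_n\to v$.

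For (A), suppose $(t_n,x_n,s_n,y_n,v_n)\in\epi D^\Be_n$ converges to $(t,x,s,y,v)$ along a subsequence; the case $v=\infty$ is trivial. Since the $v_n$ are non-negative integers converging to $v$, eventually $v_n=v$ is a fixed integer $k$. For each $n$ I would fix switch points $(t_i^n,x_i^n)$ and a discrete path realizing $D^\Be_n(t_n,x_n;s_n,y_n)=k$. The switch times lie in a bounded interval, and by the Brownian scaling of the random walks on bounded time windows the spatial coordinates are tight. Passing to a further subsequence, each $(t_i^n,x_i^n)$ converges to some $(t_i,x_i)$ with $t\le t_1\le\cdots\le t_k\le s$. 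By the coupling, on each nontrivial limiting interval the random walk segments converge to a Brownian web trajectory; where consecutive $t_i$ coincide, the adjacent segments glue into a single trajectory, only reducing the effective number of switches. This produces a continuous path on $[t,s]$ coinciding with Brownian web paths on at most $k+1$ pieces, so $D^\Br(t,x;s,y)\le k=v$, which is the required inclusion.

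The main obstacle is step (A): the switch points may accumulate, drift to the boundary, or land at exceptional space-time points where the Brownian web exhibits non-uniqueness of outgoing trajectories. One must identify the limits of random walk paths joining almost coinciding endpoints as genuine Brownian web paths between the limiting endpoints, not as spurious ``boundary'' paths. Handling this cleanly requires invoking the full coalescing-path topology of the Brownian web and combining it with the lower semicontinuity from Theorem~\ref{thm:DBrLSC} to ensure that the concatenated limit path is a legitimate witness in Definition~\ref{def:DBr}. The parity issue in (B) and the routine reduction to finite $v$ are by comparison minor.
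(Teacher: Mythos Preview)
Your overall framework---splitting epigraph convergence into a liminf lower bound (your (A)) and a recovery sequence (your (B))---matches the paper's Lemma~\ref{lemma:epigraphconv}. Your argument for (A) is essentially the paper's: the paper uses the compactness of $B$ as an element of $(H,d_H)$ to extract subsequential limits of the $k+1$ path segments directly inside $B$, which cleanly disposes of your worries about exceptional points. Contrary to your assessment, (A) is the \emph{easy} half.

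The genuine gap is in (B). Your claim that ``switching outgoing direction at each $(t_i^n,x_i^n)$ is always available because every lattice point has two outgoing edges'' does not produce a discrete path that tracks $\pi$. At a switch point $(t_i,x_i)$ of the Brownian geodesic, $\pi$ leaves the continuing path $B_{(t_i,x_i)}$ and follows the newly born path $B'_{(t_i,x_i)}$; the point $(t_i,x_i)$ is a random $(1,2)$ point of the web. If you take a lattice approximation $(t_i^n,x_i^n)$ and make the single non-web step, the random walk web path from the resulting neighbour will typically coalesce back with the original path after $O(1)$ unscaled steps---instantaneously in the scaling limit. So your constructed discrete path, after each ``switch'', does \emph{not} converge to $B'_{(t_i,x_i)}$ but rather merges back into $B_{(t_i,x_i)}$, and you have no control showing $D^\Be_n\le k$ at the chosen endpoints. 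The paper explicitly flags this: ``the random walk distance uses microscopic information (two curves being a single edge away from each other) that does not behave well in the scaling limit.''

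The paper's remedy for (B) is entirely different from yours. Instead of tracking the geodesic, it studies the \emph{level sets}: the backward boundaries $r_k^\pm$ of the discrete regions $R_k=\{D^\Be(\cdot;j,I)\le k\}$ and their continuum analogues $\rho_k^\pm$. Propositions~\ref{prop:Skorokhod}--\ref{prop:discreteregions} identify both as iterated Skorokhod reflections of backward walk/Brownian paths, and Proposition~\ref{prop:regionconv} combines Donsker's theorem (giving $r_{k,n}^\pm\to\rho_k^\pm$ in law) with a one-sided almost sure bound from the Hausdorff convergence $\wh Y^{(n)}\to\wh B$ to upgrade this to almost sure uniform convergence, for countably many rational targets $(s,I)$. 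Then, given $D^\Br(t,x;s,y)=k$, one approximates $(s,y)$ by rational segments $\{s_j\}\times I_j$ intersecting the geodesic, and uses the boundary convergence to find nearby lattice points inside the discrete region $R_k(s_j,I_j)$. This is the missing idea in your proposal.
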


The proof of Theorem \ref{thm:DBetoDBr} is based on understanding the boundaries of regions that are distance at most $k$ from a spatial half line.
These are closely related to boundaries of disks, see Figure \ref{fig:my_label}.

\begin{figure}
\centering
\includegraphics[width=200pt]{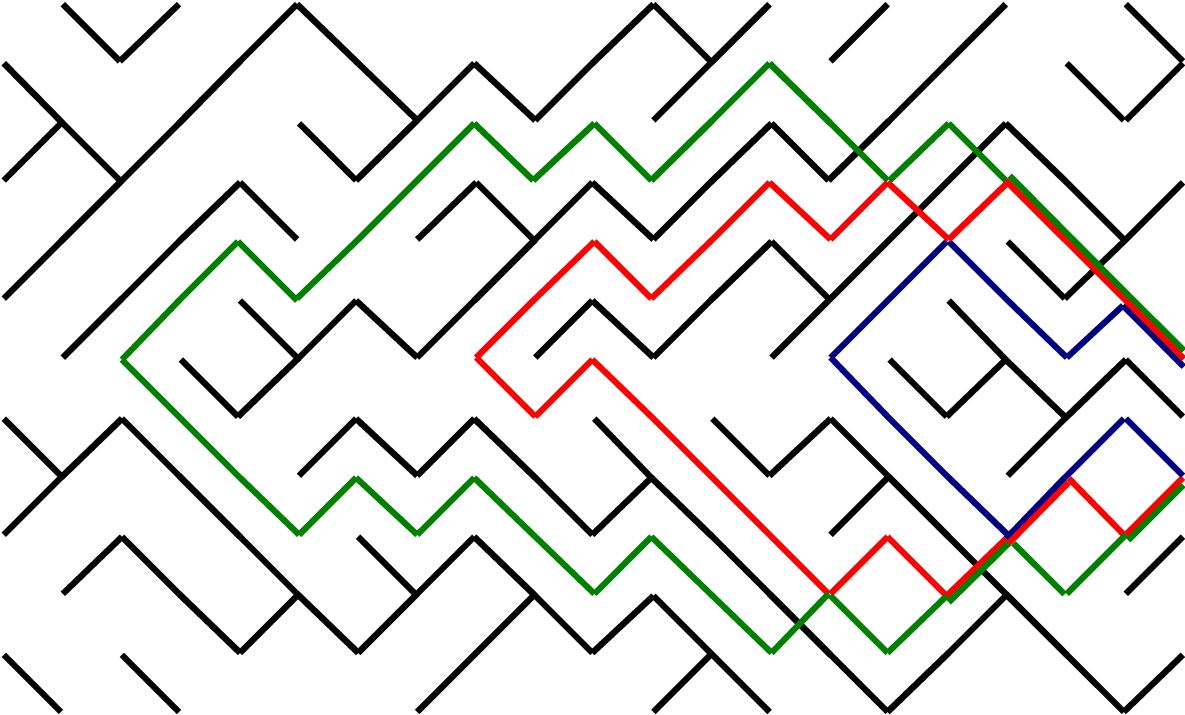}
\qquad\includegraphics[width=200pt]{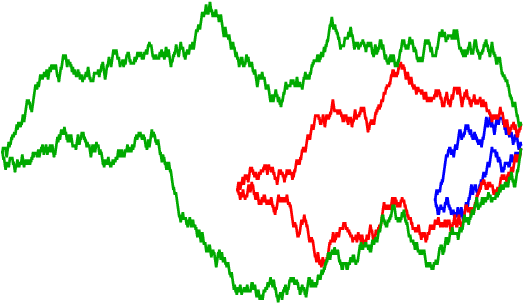}
\caption{Boundaries of  left disks with radii $0$ (in blue), $1$ (in red) and $2$ (in green) for random walk and Brownian web distance.
The centers of nontrivial left Brownian web disks are atypical points.}
\label{fig:my_label}
\end{figure}

We give an alternative characterization of the Brownian web distance as an induced metric given in Definition 5.3 in~\cite{DV21}.
We introduce a natural partial directed metric $d(t,x;s,y)$ defined only if the point $(s,y)$ is on one of the outgoing paths starting at $(t,x)$
and in particular if $s\ge t$.

\begin{definition}\label{def:DBrtilde}
We set $d(t,x;s,y)=0$ if $(t,x)=(s,y)$ or if there is a unique outgoing path $B_{(t,x)}$ starting at $(t,x)$ and $B_{(t,x)}(s)=y$.
We let $d(t,x;s,y)=1$ in the following cases:
either if $(t,x)$ is a type $(1,2)$ point and $(s,y)$ is on the other outgoing path $B'_{(t,x)}$ starting at $(t,x)$
or if $(t,x)$ is another type of exceptional point with multiple outgoing paths and $(s,y)$ is a point along one of these paths.

Let $\wt D^\Br$ be the directed metric induced by $d$ on $\R^4$, that is,
the supremum of all directed metrics on $\R^4$ which are upper bounded by $d$ for those pairs of points for which $d$ is defined.
\end{definition}

We remark that this is the natural way to define $d(t,x;s,y)$ which is $0$ as long as $(s,y)$ can be obtained by following $B_{(t,x)}$.
In case of an exceptional starting point $(t,x)$ the distance is $1$ when following any other path starting at $(t,x)$.

\begin{proposition}\label{prop:DBraltern}
The induced directed metric $\wt D^\Br$ exists and it agrees with $D^\Br$ on $\R^4$.
\end{proposition}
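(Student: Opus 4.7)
The plan is to establish both inequalities $\wt D^\Br \ge D^\Br$ and $\wt D^\Br \le D^\Br$, preceded by a short check that the supremum in Definition~\ref{def:DBrtilde} produces a well-defined function on $\R^4$.

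For existence, I would first observe that the pointwise supremum of a family of lower semicontinuous directed metrics is again such a function: lower semicontinuity is preserved under suprema, and the triangle inequality survives via the elementary bound $\sup_\alpha(a_\alpha+b_\alpha)\le\sup_\alpha a_\alpha+\sup_\alpha b_\alpha$. The inequality $\wt D^\Br\ge D^\Br$ is then immediate, since $D^\Br$ itself is an admissible competitor: it is a directed metric by \eqref{DBtriangle}, lower semicontinuous by Theorem~\ref{thm:DBrLSC}, and by the definition of $d$ it coincides with $d$ on the skeleton.

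For the reverse inequality $\wt D^\Br\le D^\Br$, it suffices to show $f\le D^\Br$ pointwise for every admissible competitor $f$, and the plan is to reduce this to the following approximation statement: for any $(t,x;s,y)\in\R^4$ with $D^\Br(t,x;s,y)=k<\infty$, there exists a sequence $(t'_n,x'_n;s'_n,y'_n)\to(t,x;s,y)$ with both $(t'_n,x'_n)$ and $(s'_n,y'_n)$ on the skeleton of the Brownian web and $d(t'_n,x'_n;s'_n,y'_n)\le k$ for every $n$. Once this is available, lower semicontinuity of $f$ combined with $f\le d$ on the skeleton yields
\[
f(t,x;s,y)\le\liminf_{n\to\infty}f(t'_n,x'_n;s'_n,y'_n)\le\liminf_{n\to\infty}d(t'_n,x'_n;s'_n,y'_n)\le k,
\]
which is exactly $f(t,x;s,y)\le D^\Br(t,x;s,y)$. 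The case $D^\Br(t,x;s,y)=\infty$ is vacuous.

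The main obstacle is the approximation statement. Starting from a realizing continuous path $\pi$ on $[t,s]$ with intermediate jump times $t<t_1<\dots<t_k<s$, the plan is to shift the endpoints into the skeleton while reusing the intermediate times and the intermediate Brownian web trajectories. Concretely, I would pick a skeleton point $(t'_n,x'_n)$ slightly after $(t,x)$ along the first trajectory of $\pi$ and strictly before $t_1$, and a skeleton point $(s'_n,y'_n)$ slightly before $(s,y)$ along the final trajectory of $\pi$ and strictly after $t_k$. Existence of such approximations follows from the density of the countable starting set $D$ used to build the skeleton, together with Proposition~\ref{prop:DBrfiniteness} which guarantees that $(s,y)$ lies on some forward trajectory whenever $D^\Br$ is finite. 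The delicate point is ensuring that the chosen nearby skeleton paths actually coalesce with the correct branch of $\pi$ strictly before $t_1$ (respectively, strictly after $t_k$); this is controlled using the type classification from Section~\ref{s:BW}, with special attention at $(1,2)$ points where a newly born trajectory could cause an ambiguity at an endpoint.
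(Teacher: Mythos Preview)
Your proposal is correct and follows essentially the same route as the paper: show that $D^\Br$ is itself an admissible competitor to get $\wt D^\Br\ge D^\Br$, and for the reverse inequality approximate by skeleton points and invoke lower semicontinuity. The paper streamlines one step you did not: since $D^\Br(t,x;s,y)<\infty$ already forces $(s,y)$ onto the skeleton (Proposition~\ref{prop:DBrfiniteness} gives an incoming path, hence at least two dual outgoing paths whose wedge contains points of $D$), only the first pair $(t,x)$ needs approximation, which the paper carries out by choosing skeleton points in the wedge between the two backward paths emanating from the first jump point $(t_1,x_1)$; your alternative of sliding along the first segment of $\pi$ works as well and makes your ``delicate coalescence'' concern moot.
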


We mention two further remarkable properties of the Brownian web distance
which are formulated in Subsection~\ref{ss:BWdistcont}.
By Proposition~\ref{prop:DBrfiniteness}, the Brownian web distance is finite if and only if
the target point is on the skeleton of the Brownian web.
Furthermore, by Proposition~\ref{prop:DBrrational}, the Brownian web distance is determined by its values on a countable set.

\subsection{KPZ universality}

Even though the Brownian web distance is scale-invariant, in certain directions it still exhibits KPZ universality.
This can be described in terms of the directed landscape.
First, consider independent two-sided standard Brownian motions $W_i$ for $i \in \mathbb Z$, and the directed metric on $ \mathbb R\times \mathbb Z$ given by Brownian last passage percolation
\begin{equation}\label{defBLPP}
L(s,m;t,n)=\max_{s=t_{m-1}\le t_m\le\dots\le t_n=t}\sum_{i=m}^n(W_i(t_i)-W_i(t_{i-1}))
\end{equation} whenever $s\le t, m\le n$, and $L=-\infty$ elsewhere.
It was shown as Theorem 1.5 in~\cite{DOV18} that Brownian last passage percolation has a distributional scaling limit,
which can be taken to be the definition of the directed landscape
\begin{equation}
n^{1/6}\left(
L(s+2xn^{1/3},\lfloor sn \rfloor;t+2yn^{1/3},\lfloor tn \rfloor)
-2(t-s)n^{1/2}-2(y-x)n^{1/6}\right)
\rightarrow \mathcal L(x,s;y,t)
\end{equation}
as a function of $(x,s,y,t)\in \mathbb R^4$, in the topology generated by uniform convergence on compact subsets with $s<t$.  The directed landscape has been shown to be the scaling limit of several last passage percolation models, as well as the KPZ equation, see \cite{DV21,QS23,V20,W23}.

The function $x\mapsto \mathcal L(0,0;x,1)+x^2$ is called the stationary Airy process, first introduced in~\cite{PS02} as the scaling limit of the interface in the polynuclear growth model. For $s<t$ the real random variable  $\mathcal L(x,s;y,t)+(x-y)^2/(t-s)$ has GUE Tracy--Widom law scaled by $(t-s)^{1/3}$.

We prove that the shear limit of the Brownian web distance to a half-line in the first pair of variables is given in terms of the directed landscape.
We use the notation $D^\Br(t,x;s,A)=\inf_{y\in A}D^\Br(t,x;s,y)$ for all $A\subseteq\R$ and the same for $D^\Be$ in what follows.

\begin{theorem}\label{thm:shearlimit}
As $m\to\infty$, the Brownian web distance after a shear mapping satisfies
\begin{equation}\label{shearlimit}
\frac{tm^2+2zm^{4/3}-D^\Br(-t,2tm+2zm^{1/3};0,\R_-)}{m^{2/3}}\to\mathcal L(0,0;z,t)
\end{equation}
in law with respect to the topology of uniform convergence on compact sets for $(z,t)\in\R\times(0,\infty)$.
\end{theorem}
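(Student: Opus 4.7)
The plan is to reduce the shear limit to the convergence of Brownian last passage percolation (BLPP) to the directed landscape established in~\cite{DOV18}, via an LPP-type representation of the Brownian web distance to a half-line.

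\emph{Step 1 (LPP representation).} A continuous path that concatenates Brownian web trajectories must switch branches at $(1,2)$-branching points of $B$, and each such switch is dually equivalent to crossing one dual BW trajectory. Using this, I would argue that $D^\Br(-t,2tm+2zm^{1/3};0,\R_-)$ equals, up to boundary corrections of lower order, the number of dual BW trajectories starting at some $(0,y)$ with $y\le 0$ whose backward continuation reaches height $\ge 2tm+2zm^{1/3}$ at time $-t$. Brownian scale invariance with exponents $0:1:2$ then reduces to canonical scales so that only the dependence on $m$ remains.

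\emph{Step 2 (Decoupling and comparison with BLPP).} In the shear regime the dual paths relevant for the count must travel atypically at speed $\sim 2m$ over time $t$. Because two coalescing BMs at spatial separation $\delta$ merge in time $\sim\delta^2$, coalescence corrections to independence are subleading against the large-deviation cost of the rapid travel. Consequently, the dual-BW counting factorizes asymptotically into a BLPP maximum over independent Brownian motions; the shear velocity $2m$ plays the role of a large column parameter and the shift $2zm^{1/3}$ corresponds to the spatial perturbation in the BLPP-to-$\mathcal L$ scaling. Step~3 then invokes the convergence of~\cite{DOV18} to identify $\mathcal L(0,0;z,t)$: the leading term $tm^2$ arises from the BLPP shape function at the relevant direction, the correction $2zm^{4/3}$ from its Taylor expansion, and the $m^{2/3}$ fluctuation scale matches the KPZ scale of BLPP. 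Joint convergence in $(z,t)$ uniformly on compact sets is inherited from the joint BLPP-$\mathcal L$ convergence together with standard tightness.

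The main obstacle is Step~1: turning the heuristic dual-BW count into a rigorous asymptotic identity with $o(m^{2/3})$ error requires detailed control of the $(1,2)$-branching structure of $B$ and of the behavior near the boundary half-line $\R_-$. A possibly more tractable alternative is to establish the LPP representation first at the discrete level for $D^\Be$, where the connection to last passage percolation is explicit via the sign field $\xi$ (a path from $(0,0)$ to $(N,M)$ has discrete web distance equal to $N/2$ minus half the maximum of $\sum_i\sgn(\pi(i{+}1)-\pi(i))\xi_{(i,\pi(i))}$), apply the analogous RW~LPP $\to$ BLPP $\to\mathcal L$ chain at the discrete level, and transfer the resulting shear limit to $D^\Br$ via the epigraph convergence of Theorem~\ref{thm:DBetoDBr}. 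This route trades geometric analysis of the continuum web for a careful joint-limit argument in $m$ and $n$.
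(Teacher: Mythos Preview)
Your proposal misses the key structural fact that makes the proof work cleanly, and the decoupling heuristic you offer in Step~2 is not needed and would be very hard to make rigorous.

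The paper does not approximate the Brownian web distance by an LPP quantity; it shows an \emph{exact} identity in law. Specialising Proposition~\ref{prop:regions} to the half-line $I=\R_-$, the upper boundary
\[
\rho_k^+(t)=\sup\{x:D^\Br(t,x;0,\R_-)\le k\}
\]
is, by Proposition~\ref{prop:Skorokhod}, a backward Brownian motion Skorokhod-reflected off $\rho_{k-1}^+$, with $\rho_0^+$ a backward Brownian motion. Lemma~\ref{lemma:BLPPSkorokhod} says iterated Skorokhod reflection \emph{is} Brownian last passage percolation, so $(\rho_k^+(-t))_{k\ge0,\,t\ge0}$ has exactly the law of BLPP. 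One then applies the BLPP~$\to\mathcal L$ convergence of~\cite{DOV18,DV21} to $\rho_{tn+2zn^{2/3}}^+(-t)$ and inverts the relation $x\le\rho_k^+(-t)\iff D^\Br(-t,x;0,\R_-)\le k$ by a short Taylor expansion in $m=\sqrt n$. No counting of dual trajectories, no coalescence error terms, no large-deviation decoupling.

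Your Step~1 heuristic---counting dual BW paths that reach a given height---is not the right object: the dual paths coalesce rather than being independent, and the integer $D^\Br$ is not a count of dual paths crossed in any simple sense. The ``coalescence corrections are subleading'' claim in Step~2 is precisely the hard part you would have to prove, and there is no obvious mechanism for it; in fact the exact Skorokhod structure is what replaces any such estimate. Your alternative discrete route via Theorem~\ref{thm:DBetoDBr} also has a gap: epigraph convergence of $D^\Be_n$ to $D^\Br$ is for fixed arguments, whereas here the spatial argument $2tm+2zm^{1/3}$ escapes to infinity with $m$, so a joint limit in $m$ and $n$ would require uniformity that is not provided. (The paper \emph{does} use the discrete Skorokhod/LPP identity, Proposition~\ref{prop:discreteSkorokhod}, but for Theorem~\ref{thm:DBetoTW}, again as an exact representation rather than via epigraph transfer.)
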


Note that by the $0:1:2$ scale invariance, as a process in $z,t$,
\begin{equation}
D^\Br(-t,2tm+2zm^{1/3};0,\R_-) \;\stackrel{\d}{=} \;D^\Br(-tm^2,2tm^2+2zm^{4/3};0,\R_-).
\end{equation}
Thus, Theorem~\ref{thm:shearlimit} also gives the more customary scaling limit in the direction $(-1,2)$ with the usual scaling exponents.
Note also that all directions $(-1,\eta)$ when $\eta\not=0$ are equivalent by Brownian scaling.

The choice of the target $\{0\}\times\R_-$ in Theorem~\ref{thm:shearlimit} corresponds
to the narrow wedge initial condition for the KPZ fixed point.
It is a natural question to consider other initial conditions, but we do not pursue this direction.

In the $t=1$ case the limit in \eqref{shearlimit} is the parabolic Airy process, $\mathfrak A(y)=\mathcal L(0,0;y,1)$.
We expect that the Airy scaling limit holds in the second space variable as well and the joint limit should be the Airy sheet, $\mathcal S(x,y)=\mathcal L(x,0;y,1)$.
\begin{conjecture}
The rescaled Brownian web distance has Airy fluctuations in the other space variable as well, that is,
\begin{equation}
\frac{m^2-2ym^{4/3}-D^\Br(-1,2m;0,(-\infty,2ym^{1/3}])}{m^{2/3}}\to \mathfrak A(y)
\end{equation}
as $m\to\infty$.
\end{conjecture}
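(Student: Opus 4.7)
The strategy is to deduce the conjecture from Theorem~\ref{thm:shearlimit} by exploiting the spatial translation invariance of the Brownian web, and then to upgrade the resulting one-point marginal convergence to joint process convergence in $y$.

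For the one-point reduction, observe that translating $B$ by $-2ym^{1/3}$ in space preserves its distribution, carries the target half-line $(-\infty,2ym^{1/3}]\times\{0\}$ to $\R_-\times\{0\}$, and carries $(-1,2m)$ to $(-1,2m-2ym^{1/3})$. Hence
\begin{equation*}
D^\Br\bigl(-1,2m;0,(-\infty,2ym^{1/3}]\bigr)\stackrel{\d}{=}D^\Br\bigl(-1,2m-2ym^{1/3};0,\R_-\bigr).
\end{equation*}
Applying Theorem~\ref{thm:shearlimit} with $t=1$ and $z=-y$ (so that $2tm+2zm^{1/3}=2m-2ym^{1/3}$), the right-hand side, centered by $m^2-2ym^{4/3}$ and scaled by $m^{-2/3}$, converges in law to $\mathcal L(0,0;-y,1)$, which by the spatial reflection symmetry of the directed landscape has the same law as $\mathcal L(0,0;y,1)$, and by the identification in the paper as $\mathfrak A(y)$.

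The one-point argument does not yield joint convergence of
\begin{equation*}
g_m(y):=\frac{m^2-2ym^{4/3}-D^\Br(-1,2m;0,(-\infty,2ym^{1/3}])}{m^{2/3}}
\end{equation*}
as a process in $y$, since the translation above depends on $y$. I would handle process convergence by splitting into tightness and finite-dimensional convergence. For tightness, the map $y\mapsto D^\Br(-1,2m;0,(-\infty,2ym^{1/3}])$ is non-increasing because the target set is nested, so $y\mapsto g_m(y)+2ym^{2/3}$ is non-decreasing; combining this monotonicity with one-point tail bounds coming from Theorem~\ref{thm:shearlimit} and the regularity inherited from the epigraph convergence in Theorem~\ref{thm:DBetoDBr} should yield a modulus of continuity matching the H\"older-$1/3^-$ regularity of the Airy process.

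For finite-dimensional convergence, the natural route is the stronger two-parameter Airy-sheet limit
\begin{equation*}
\frac{m^2-2(y-x)m^{4/3}-D^\Br\bigl(-1,2m+2xm^{1/3};0,(-\infty,2ym^{1/3}]\bigr)}{m^{2/3}}\;\longrightarrow\;\mathcal S(x,y),
\end{equation*}
from which the conjecture follows by specializing at $x=0$. This is in line with the authors' expectation of a full directed-landscape limit for the Brownian web distance, and would follow either from a coupling of $B$ with Brownian last passage percolation in the spirit of~\cite{DV21}, or from an adaptation of the Dauvergne--Ortmann--Vir\'ag~\cite{DOV18} construction to the $0:1:2$ scaling of the web distance. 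The principal obstacle is exactly this Airy-sheet strengthening: the spatial-translation trick gives only one-point convergence because each $y$ requires a different shift of $B$, so a genuine coupling argument, or the stronger directed-landscape statement, is needed to access the joint distributions.
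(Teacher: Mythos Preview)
The statement you are attempting is a \emph{conjecture} in the paper; the authors do not offer a proof. So there is no paper proof to compare against, and your task is really to assess whether your argument closes an open problem.

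Your one-point reduction is correct: for each fixed $y$, spatial translation invariance of the Brownian web gives the distributional identity you wrote, and Theorem~\ref{thm:shearlimit} at $t=1$, $z=-y$ then yields convergence of the marginal to $\mathcal L(0,0;-y,1)\stackrel{\d}{=}\mathfrak A(y)$. This much is straightforward and presumably known to the authors; the conjecture is meant as a statement about the process in $y$, in parallel with the uniform-in-$z$ convergence of Theorem~\ref{thm:shearlimit}.

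For the process-level statement your proposal has genuine gaps, which you partly acknowledge. First, your route to finite-dimensional distributions is to invoke the Airy-sheet limit, which is precisely the next, stronger, open conjecture in the paper; reducing one open problem to a harder open problem is not a proof. Second, your tightness sketch does not work as stated. The monotone quantity $y\mapsto g_m(y)+2ym^{2/3}$ carries a drift of order $m^{2/3}$ between any two fixed $y$-values, so monotonicity only gives $g_m(y_2)-g_m(y_1)\ge -2(y_2-y_1)m^{2/3}$, which is useless for controlling $O(1)$ fluctuations; the classical ``monotone plus pointwise limit implies uniform'' argument fails because the monotone functions themselves diverge. Epigraph convergence from Theorem~\ref{thm:DBetoDBr} also does not help here, since it concerns the discrete-to-continuous limit, not the $m\to\infty$ shear limit. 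In short, you have verified the marginal law but the conjecture remains open, and neither half of your process-level outline supplies the missing argument.
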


\begin{conjecture}\label{conj:Airysheet}
The fluctuations of the rescaled Brownian web distance are given by the Airy sheet $\mathcal S$, that is
\begin{equation}\label{conjectureAirysheet}
\frac{m^2+2(z-y)m^{4/3}-D^\Br(-1,2m+2zm^{1/3};0,(-\infty,2ym^{1/3}])}{m^{2/3}}\to\mathcal S(y,z)
\end{equation}
as $m\to\infty$.
\end{conjecture}

We expect that if we replace the half-line $(-\infty,2ym^{1/3}]$ by its endpoint $2ym^{1/3}$ in \eqref{conjectureAirysheet}, the conjecture holds in the sense of hypograph convergence.
In our scaling, the half-line corresponds to an increasingly steeper half-wedge,  and so it converges to a narrow wedge initial condition.

Theorem 6.5 in the recent preprint \cite{DZ24} establishes the full convergence of the rescaled Brownian web distance
as well as of the random walk web distance to the directed landscape, in particular, it proves Conjecture~\ref{conj:Airysheet}.

The next results give information about the limiting fluctuations of the random walk web distance.
In any direction different from horizontal, the random walk web distance to a vertical half-line has Airy fluctuations.

\begin{theorem}\label{thm:DBetoTW}
For any $\eta\in(0,1)$,
\begin{multline}
\frac{(1-\eta^2)^{1/6}}{(\eta/2)^{2/3}}n^{-1/3}\Bigg(\frac{1-\sqrt{1-\eta^2}}2n-\frac{\eta^{1/3}(1-\eta^2)^{1/6}}{2^{1/3}}zn^{2/3}\\
-D^\Be(0,0;n,(-\infty,-\eta n-2^{2/3}\eta^{1/3}(1-\eta^2)^{2/3}zn^{2/3}))\Bigg)\to\A(z)
\end{multline}
as $n\to\infty$ uniformly in $z$ on compact intervals.
\end{theorem}

The main term $(1-\sqrt{1-\eta^2})n/2$ in Theorem \ref{thm:DBetoTW} gives a description of the asymptotic shape of disks in $D^\Be$, see Figure \ref{fig:desmos}.
There is no conceptual difficulty in extending Theorem \ref{thm:DBetoTW} to two-parameter convergence as in Theorem \ref{thm:shearlimit}.
However, in \cite{DV21} only hypograph convergence was shown for the Sepp\"al\"ainen--Johansson model.
The required uniform convergence extension is straightforward but too technical for the present paper.

\begin{figure}[ht!]
\centering
\includegraphics[width=50mm]{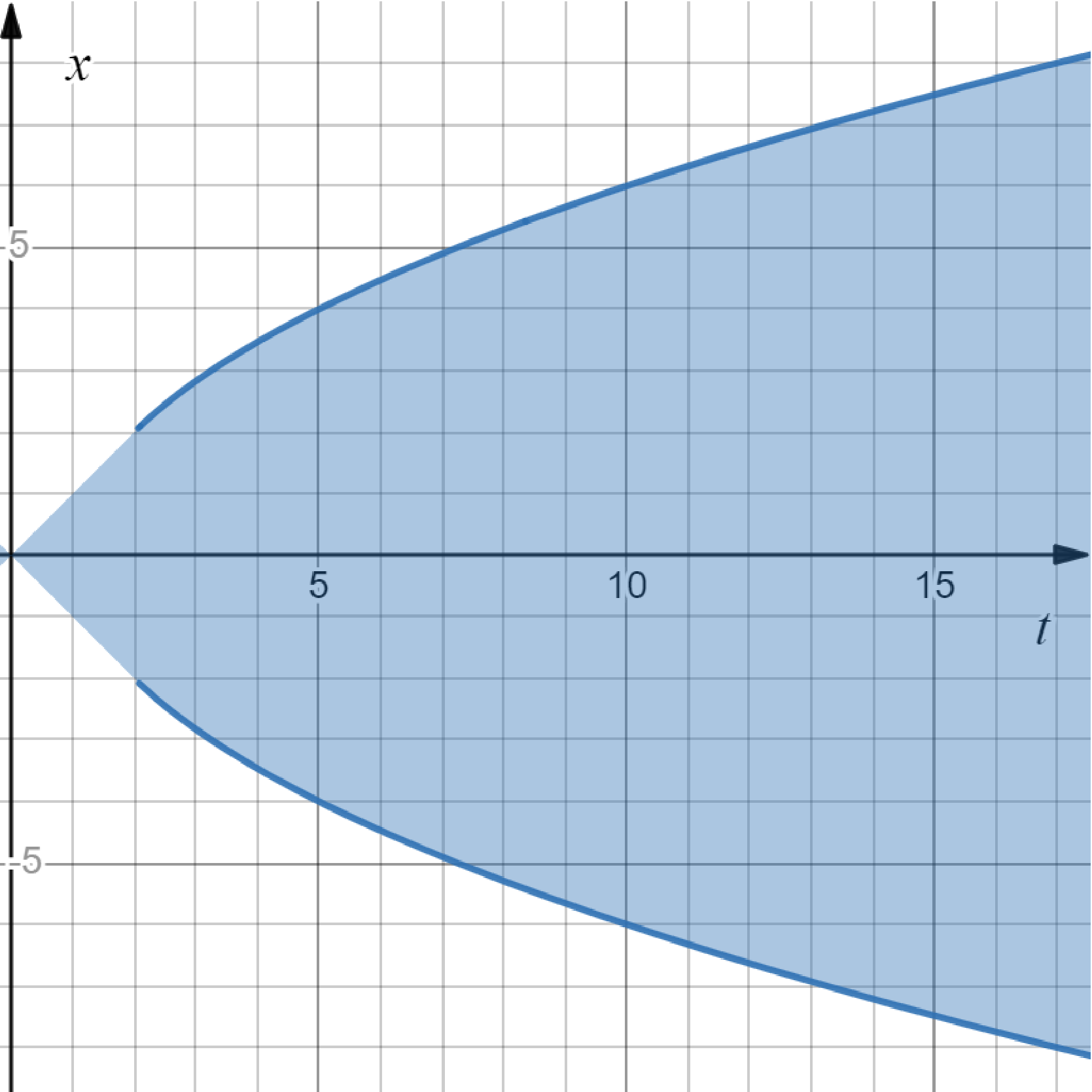}
\caption{The asymptotic disk $t-\sqrt{t^2-x^2}\le 2$ for the shift term in Theorem \ref{thm:DBetoTW}}
\label{fig:desmos}
\end{figure}

The directions different from horizontal are covered by Theorem~\ref{thm:DBetoTW}.
The random walk web distance of two points along the same horizontal line is surprisingly completely different.
We prove logarithmic upper and lower bounds on the horizontal distance and we expect it to satisfy a central limit theorem with normal fluctuations.
The logarithmic scaling of the Brownian web distance in the horizontal direction can be proved similarly.

\begin{theorem}\label{thm:DBehorizontal}
For some $c>1$,
\begin{equation}\label{DBehorizontal}
\lim_{n\to\infty}\P\left(1/c\le\frac{D^\Be(0,0;n,0)}{\log n}\le c\right)=1.
\end{equation}
\end{theorem}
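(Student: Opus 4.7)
The plan is to prove the two bounds separately. For the upper bound $D^\Be(0,0;n,0)\le c\log n$, I would construct an admissible path from $(0,0)$ to $(n,0)$ with $O(\log n)$ jumps via a dyadic multi-scale scheme. Set $t_k = n(1-2^{-k})$ for $k=0,\dots,K=\lceil\log_2 n\rceil$ and maintain the invariant that the constructed path is at a position $x_k$ at time $t_k$ with $|x_k|\le C\sqrt{n-t_k}$. Between $t_{k-1}$ and $t_k$ one follows the single coalescing random walk issuing from the current point and uses at most a bounded number of jumps to land in the target window at time $t_k$. The key probabilistic input is that from a starting position inside the current window, the set of time-$t_k$ positions reachable with $O(1)$ jumps is, by a standard Brownian-web density calculation, a subset of density $\Theta(1)$ in a spatial range of size $\Theta(\sqrt{n/2^k})$; hence it intersects the next target window with probability bounded away from $0$, and this probability can be driven to $1-O(1/K^2)$ after paying $O(\log K)$ additional jumps. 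Summing the failure probabilities over the $K=O(\log n)$ stages yields an admissible path with $O(\log n)$ jumps with probability tending to $1$.

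For the lower bound $D^\Be(0,0;n,0)\ge c^{-1}\log n$, I would bound the family $\mathcal W_k$ of distinct coalescing walks in $Y$ that can be reached from $(0,0)$ after at most $k$ jumps. A single jump from a given walk produces only a bounded number of new coalescence classes at each dyadic scale of remaining time, giving the a priori branching bound $|\mathcal W_k|\le C^k$ with overwhelming probability, via Brownian-web density estimates. The target $(n,0)$ lies on a specific coalescing walk $W^\ast$, which is one of $\Theta(\sqrt n)$ coalescence classes passing through a $\Theta(\sqrt n)$-neighborhood of $(n,0)$ at time $n$. Since the generation of $\mathcal W_k$ can be decoupled from the position of $W^\ast$, one expects $\P(W^\ast \in \mathcal W_k)\le O(C^k/\sqrt n)$, forcing $k\ge(\log n)/(2\log C)$ for the probability to remain close to $1$.

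The main obstacle is the lower bound. A direct first-moment argument over the $\binom{n}{k}$ choices of jump-time sequences is useless: each sequence yields a terminal position equal to $0$ with probability $\sim 1/\sqrt n$, so the union bound is already of order $n^{k-1/2}/k!$ and exceeds $1$ as soon as $k\ge 1$. The required improvement comes from the coalescence structure of $Y$: nearby jump-time sequences produce the same terminal walk, so the effective number of distinct trials is vastly smaller than the naive $\binom{n}{k}$. Making the branching bound $|\mathcal W_k|\le C^k$ rigorous requires a careful multi-scale analysis of the number of coalescence classes generated by one jump across each dyadic time-scale, which can likely be carried out using the Brownian-web coupling of Theorem~\ref{thm:DBetoDBr} and quantitative estimates on the density of branching points of the coalescing web at each spatial scale.
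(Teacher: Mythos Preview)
Your lower-bound argument rests on the branching bound $|\mathcal W_k|\le C^k$, which is false. Already for $k=1$ the set of positions at time $n$ reachable from $(0,0)$ with one jump has cardinality of order $\sqrt n$, not $O(1)$: the upper envelope of the walks obtained by a single jump off $Y_{(0,0)}$ is itself a random walk Skorokhod-reflected off $Y_{(0,0)}$ (this is the forward-time analogue of the representation in Proposition~\ref{prop:discreteSkorokhod}), and its displacement from $Y_{(0,0)}(n)$ at time $n$ is of order $\sqrt n$. More generally $|\mathcal W_k|$ grows with $n$ for every fixed $k$, so no bound independent of $n$ is available. The decoupling heuristic is equally unfounded: the reachable set at time $n$ is a contiguous interval whose location is tied to $Y_{(0,0)}(n)$, not a random sample from the $\Theta(\sqrt n)$ walks near the origin, so the estimate $\P(W^*\in\mathcal W_k)=O(C^k/\sqrt n)$ has no basis.

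The paper's proof uses a different geometric quantity. Passing by shift invariance to $D^\Be(-n,0;0,0)$, one studies the backward region $R_k=\{(i,m):D^\Be(i,m;0,0)\le k\}$, whose boundary curves $r_k^\pm$ are reflected backward walks that coalesce at a finite time $T_k$; the region is empty for $i<T_k$. The lower bound then amounts to showing that $|T_k|$ grows only geometrically in $k$: the paper dominates $R_k$ by a rectangle of dimensions $E_k\times\sqrt{E_k}$ and proves that $\log E_{k+1}-\log E_k$ has uniformly exponential tails given the past, so $\P(|T_{a\log n}|\ge n)\to 0$ for small $a$. For the upper bound the paper shows $|T_{k+1}|/|T_k|>1+\varepsilon$ with uniformly positive probability, so $|T_{c\log n}|\ge n$ with high probability, and then invokes tightness of $D^\Be(-n,0;0,\Z_+)$ to connect $(-n,0)$ to $R_{c\log n}$. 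Your dyadic scheme for the upper bound is a genuinely different idea and may be salvageable, but note that as written it spends $O(\log K)$ jumps per stage over $K=\Theta(\log n)$ stages, yielding $O(\log n\log\log n)$ rather than $O(\log n)$.
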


The rest of the paper is organized as follows.
We define the Brownian web in the appropriate metric space with the corresponding topology in Section~\ref{s:BW}
and we describe some properties of the Brownian web in particular the convergence of the discrete web to the Brownian web in Theorem~\ref{thm:YntoB}.
Section~\ref{s:BWdist} contains the most important properties we prove about the Brownian web distance,
which are used for the proof of our main results.
We prove Theorem~\ref{thm:DBetoDBr} about the convergence of discrete web distance to Brownian web distance in Section~\ref{s:convergence}.
Theorem~\ref{thm:shearlimit} about the shear limit of Brownian distance and Theorem~\ref{thm:DBetoTW} on the Tracy--Widom fluctuations of the random walk web distance are proved in Section~\ref{s:shear},
Theorem~\ref{thm:DBehorizontal} about the fluctuations in the horizontal direction is shown in Section~\ref{s:DBehorizontal}.
The proofs of continuity properties of Brownian web distance are postponed to Section~\ref{s:proofs}.

\section{The Brownian web and its dual}
\label{s:BW}

The Brownian web distance is constructed based on the Brownian web and its dual, which we introduce next.
The most natural construction of the Brownian web is based on Theorem~\ref{thm:Brweb} below which can be found as Theorem 2.1 in~\cite{FINR04} explicitly
and it follows from the proof of Theorem 2.1 in~\cite{TW98}.

We first introduce the metric spaces which are used in the definition of the Brownian web according to~\cite{FINR04}.
By writing $\ol\R=\R\cup\{-\infty,\infty\}$, let $\Phi:\ol\R^2\to[-1,1]^2$ be defined as
\begin{equation}
\Phi(t,x)=\left(\tanh(t),\frac{\tanh(x)}{1+|t|}\right)
\end{equation}
and let $\rho$ be a metric which naturally compactifies $\ol\R^2$ given by
\begin{equation}
\rho((t_1,x_1),(t_2,x_2))=\|\Phi(t_1,x_1)-\Phi(t_2,x_2)\|_1.
\end{equation}
Then $(\ol\R^2,\rho)$ is a compact metric space.

For any $t_0\in\ol\R$, let $C[t_0]$ be the set of functions $f:[t_0,\infty]\to\ol\R$ for which $\Phi_2(t,f(t))$ is continuous.
We let
\begin{equation}\label{defPi}
\Pi=\bigcup_{t_0\in\ol\R}\{t_0\}\times C[t_0]
\end{equation}
denote the set of paths along with their starting points.
For a $(t_0,f)\in\Pi$ let $\wh f$ be the extension of $f$ to $\ol\R$ which is equal to the constant $f(t_0)$ on $[-\infty,t_0)$ and agrees with $f$ on $[t_0,\infty]$.
We define the distance $d$ on $\Pi$ by
\begin{equation}
d((t_1,f_1),(t_2,f_2))=\left|\Phi_1(t_1,f_1(t_1))-\Phi_1(t_2,f_2(t_2))\right|\vee\sup_{t\in\ol\R}\left|\Phi_2(t,\wh f_1(t))-\Phi_2(t,\wh f_2(t))\right|.
\end{equation}
With this metric, $(\Pi,d)$ is a complete separable metric space.

Further let $(H,d_H)$ denote the metric space which consists of compact collections of paths in $(\Pi,d)$ with the Hausdorff metric
\begin{equation}\label{defdH}
d_H(K_1,K_2)=\sup_{k_1\in K_1}\inf_{k_2\in K_2}d(k_1,k_2)\vee\sup_{k_2\in K_2}\inf_{k_1\in K_1}d(k_1,k_2).
\end{equation}
The space $(H,d_H)$ is also a complete separable metric space.
Let $\F_H$ be the Borel $\sigma$-algebra generated by the metric $d_H$.

The Brownian web is defined as a random variable taking values in $(H,\F_H)$.

\begin{theorem}\label{thm:Brweb}
The Brownian web $B$ is an $(H,\F_H)$-valued random variable whose distribution is uniquely determined by the properties below.
\begin{enumerate}
\item For any deterministic point $(t,x)\in\R^2$, there is almost surely a unique path $B_{(t,x)}$ starting at $(t,x)$.
\item For any deterministic $n$ and $(t_1,x_1),\dots,(t_n,x_n)\in\R^2$, the joint distribution of $B_{(t_1,x_1)},\dots,B_{(t_n,x_n)}$
is the same as that of coalescing Brownian motions starting at $(t_1,x_1),\dots,(t_n,x_n)$.
\item For any deterministic countable dense set $D$ in $\R^2$, the closure of $\{B_{(t,x)},(t,x)\in D\}$ in $(H,d_H)$ is equal to $B$ almost surely.
\end{enumerate}
\end{theorem}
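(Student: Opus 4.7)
The plan is to follow the classical Fontes--Isopi--Newman--Ravishankar construction: first build the web on a countable dense skeleton via a consistent family of coalescing Brownian motions, then take the closure to obtain $B$, and finally verify the three defining properties together with uniqueness in law.

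For the construction, I would fix a countable dense set $D \subset \R^2$ (e.g.\ $\Q^2$) and, using Kolmogorov extension, produce a family $\{W_{(t,x)}\}_{(t,x) \in D}$ whose finite-dimensional marginals are coalescing Brownian motions; here coalescence is defined by running independent Brownian motions started at the listed points and gluing any two of them together at the first time they meet (the order of listing does not matter by the strong Markov property, so the finite-dimensional laws are consistent). The natural candidate for the Brownian web is then
\begin{equation}
B = \overline{\{(t,W_{(t,x)}):(t,x)\in D\}} \subset (\Pi,d),
\end{equation}
and the first main task is to show this closure is almost surely compact, hence an element of $(H,d_H)$. This is where I expect the real work. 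I would derive a deterministic criterion for precompactness in $(\Pi,d)$ in terms of a uniform modulus of continuity (after transporting time by $\Phi_1$), and then control this modulus using the following two ingredients: standard Brownian estimates for each individual path, and a coalescence lemma showing that, for fixed $\delta$, only finitely many of the paths in $\{W_{(t,x)}:(t,x)\in D\}$ remain pairwise non-coalesced by time $\delta$ in any compact time-space window. Combining these gives a uniform modulus bound on the closure, which yields precompactness.

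Once $B$ is defined, property (3) holds by construction. For property (1), given a deterministic $(t,x)\in\R^2$, I would approximate it by a sequence $(t_n,x_n) \to (t,x)$ in $D$; the limit points in $B$ of $W_{(t_n,x_n)}$ give paths starting at $(t,x)$, and existence of \emph{some} path follows from compactness. Uniqueness almost surely reduces to the statement that two independent Brownian motions started at $(t,x)$ coalesce instantly, which is a classical one-dimensional hitting argument. Property (2) then follows because, by (1), the paths from any deterministic finite set are almost surely limits along the skeleton, and the coalescing Brownian structure is preserved in the limit by the finite-dimensional consistency already built in.

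For uniqueness in distribution, I would argue that any $(H,\F_H)$-valued random element $B'$ satisfying (1)--(3) has the same law as $B$: by (1) and (2), the restriction of $B'$ to any countable dense $D$ has the law of the coalescing Brownian family on $D$, and by (3) the whole of $B'$ is determined by this restriction as a measurable function into $(H,d_H)$. Since $B$ is determined the same way from the same law on the skeleton, the two distributions coincide. The subtle point, and the one I would emphasize, is the measurability of the map ``skeleton $\mapsto$ closure'' from a suitable product space to $(H,\F_H)$; this follows from the separability of $(H,d_H)$ together with the deterministic precompactness criterion established earlier.
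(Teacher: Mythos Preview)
The paper does not prove this theorem: it is quoted as a known result, attributed explicitly to Theorem~2.1 of Fontes--Isopi--Newman--Ravishankar and to T\'oth--Werner, and is followed only by an informal description of the skeleton construction (sampling coalescing Brownian motions from a countable dense set and taking the closure in $(H,d_H)$). So there is no ``paper's own proof'' to compare against.

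Your outline is precisely the FINR construction that the paper cites, and the steps you list (consistent family on a countable dense $D$, compactness of the closure via a uniform modulus of continuity plus a finiteness-after-coalescence estimate, properties (1)--(3), and uniqueness from (3)) are the standard ones. The sketch is sound at the level of a plan; the genuinely hard technical point you correctly flag is compactness of the closure, which in FINR is handled through careful tightness estimates. If you intend to write this out in full you should be aware that the actual proof is substantial, but as a roadmap your proposal matches the referenced source.
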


Then the Brownian web is constructed as follows.
We fix a countable dense subset $D$ of $\R^2$ as in the third property in Theorem~\ref{thm:Brweb}
and we enumerate the points of $D$ as $z_i=(t_i,x_i)$ for $i=1,2,\dots$.
We sample Brownian motions $B_{z_i}$ starting from the points $z_i$ for each $i$ inductively
so that they are independent until they hit one of the trajectories sampled so far and they merge with the trajectory they first hit.
More precisely with an i.i.d.\ sequence of Brownian motions $(\wt B_i(t),t\ge0)_{i=1}^\infty$ and for all $i=1,2,\dots$
we let $B_{z_i}(t)=x_i+\wt B_i(t-t_i)$ for $t\in[t_i,\tau_i)$ where
\begin{equation}\label{deftaui}
\tau_i=\inf\{t\ge t_i:\exists j\in\{1,2,\dots,i-1\}:x_i+\wt B_i(t-t_i)=B_{z_j}(t)\}.
\end{equation}
If $\iota(i)=j\in\{1,2,\dots,i-1\}$ as above in \eqref{deftaui}, that is, for which $x_i+\wt B_i(t-t_i)=B_{z_j}(t)$,
then we define $B_{z_i}(t)=B_{z_{\iota(i)}}(t)$ for $t\ge\tau_i$.
Note that $\tau_1=\infty$ but $\tau_i$ is finite almost surely for $i\ge2$.
The union of the paths $(B_{z_i})_i$ for $z_i\in D$ excluding the starting points $z_i$ is the skeleton of the Brownian web.
Once the skeleton is sampled, the Brownian web $B$ is determined uniquely by Theorem~\ref{thm:Brweb}.
Equivalently, the Brownian web $B$ is the closure of its skeleton in the metric space $(H,d_H)$.

The dual of the Brownian web consists of coalescing Brownian paths running backwards in time and it is called the backward (dual) Brownian web.
It can be constructed based on the same countable dense subset $D$ of $\R^2$ as follows.
The skeleton of the backward Brownian web from each point of $D$ is the almost surely unique continuous curve going backwards in time
which does not cross the forward lines.
The backward Brownian web $\wh B$ is the closure of this set of paths in the metric space of backward paths $(\wh H,d_H)$ with the same $d_H$ as in \eqref{defdH}.
By Theorem 2.3 in~\cite{TW98}, the backward Brownian web has the same distribution as the time-reversed trajectories of the forward web.

Due to Theorem~\ref{thm:Brweb} for any deterministic $(t,x)\in\R^2$, there is almost surely a unique forward path denoted by $B_{(t,x)}$ in $B$ starting at $(t,x)$
and a unique backward path denoted by $\wh B_{(t,x)}$ in $\wh B$.
However, the Brownian web and its dual have random exceptional points with incoming paths or multiple outgoing paths.
They are characterized by their types as follows.

Any two paths $b,b'\in B$ are said to be equivalent paths entering the point $(t,x)$ if $b|_{[t-\varepsilon,t]}=b'|_{[t-\varepsilon,t]}$ for some $\varepsilon>0$.
The number of equivalence classes defines $m_{\rm in}(t,x)$.
Also $m_{\rm out}(t,x)$ can be defined similarly as the number of equivalence classes of outgoing paths.
Then the pair $(m_{\rm in}(t,x),m_{\rm out}(t,x))$ is the type of the point $(t,x)$.
By Proposition 2.4 in~\cite{TW98}, almost surely all points of $\R^2$ have one of the following six types: $(0,1)$, $(0,2)$, $(0,3)$, $(1,1)$, $(1,2)$, $(2,1)$.
For topological reasons, if a point has type $(m_{\rm in},m_{\rm out})$ in the forward Brownian web $B$,
then its type in the backward web $\wh B$ is $(\wh m_{\rm in},\wh m_{\rm out})=(m_{\rm out}-1,m_{\rm in}+1)$.

The $(1,2)$ points have special importance in the construction of the Brownian web distance.
These points can be characterized as follows.
A point is of type $(1,2)$ if and only if both a forward and a backward path pass through it.
The unique incoming path in a $(1,2)$ point $(t,x)$ continues along exactly one of the two outgoing paths which is $B_{(t,x)}$.
Let us introduce the notation $B'_{(t,x)}$ for the other outgoing path starting at the $(1,2)$ point $(t,x)$.
It is also referred to as the newly born path at $(t,x)$ in the literature, see e.g.~\cite{NRS10}.
For other types of exceptional points with multiple outgoing paths a unique $B_{(t,x)}$ is not defined.
Let $B_{(t,x)+}$ and $B_{(t,x)-}$ denote the highest and lowest outgoing paths starting at $(t,x)$
which coincide with $B_{(t,x)}$ for almost all points $(t,x)$.
The backward paths $\wh B_{(t,x)\pm}$ are defined similarly.

Donsker's invariance principle implies the convergence of the random walk web to the Brownian web for any finite collection of paths.
The convergence extends to the full web and its dual as follows.
First we introduce the dual backward system of coalescing random walks $\wh Y$ on the dual lattice
\begin{equation}
\Z^2_\o=\{(i,n)\in\Z^2:i+n\mbox{ is odd}\}.
\end{equation}
For any $(i,n)\in\Z^2_\o$ a path starts at $\wh Y_{(i,n)}(i)=n$ and the paths evolve as
\begin{equation}\label{defYhat}
\wh Y_{(i,n)}(j-1)=\wh Y_{(i,n)}(j)-\xi_{(j-1,\wh Y_{(i,n)}(j))}
\end{equation}
for all $j=i,i-1,\dots$ using the same random variables $\xi_{(i,n)}$ as in \eqref{defY}.
Let $Y^{(n)}$ denote the rescaled random walk web which is defined as follows.
For all $t,x$ with $(nt,n^{1/2}x)\in\Z^2_\e$ we let $Y^{(n)}_{(t,x)}(s)=n^{-1/2}Y_{(nt,n^{1/2}x)}(ns)$ if $ns\in\Z$
and we define $Y^{(n)}_{(t,x)}(s)$ by linear interpolation between these values which yields a continuous function for all $s\ge t$.
Let $\wh Y^{(n)}$ be the rescaled backward random walk web defined similarly.

\begin{theorem}\label{thm:YntoB}
The pair $(B,\wh B)$ of forward and backward Brownian webs together with the rescaled forward and backward random walk webs $(Y^{(n)},\wh Y^{(n)})$
can be realized on the same probability space in a way that
\begin{equation}\label{YntoB}
(Y^{(n)},\wh Y^{(n)})\to(B,\wh B)
\end{equation}
almost surely in $(H,d_H)\times(\wh H,d_H)$.
\end{theorem}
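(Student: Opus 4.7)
My plan is to reduce Theorem~\ref{thm:YntoB} to a distributional convergence statement in the Polish product space $(H,d_H)\times(\wh H,d_H)$ and then invoke Skorokhod's representation theorem to realize the convergence almost surely on a common probability space. The substantive content is therefore to establish
\begin{equation*}
(Y^{(n)},\wh Y^{(n)})\Longrightarrow(B,\wh B)
\end{equation*}
in distribution, and the coupling in~\eqref{YntoB} follows for free because the product of two complete separable metric spaces is again Polish.

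For the forward marginal $Y^{(n)}\Rightarrow B$ I would follow the Fontes--Isopi--Newman--Ravishankar framework from~\cite{FINR04}. This reduces convergence in $(H,d_H)$ to two ingredients: first, joint convergence of $\{Y^{(n)}_{(t,x)}\}_{(t,x)\in D'}$ to coalescing Brownian motions for every finite subset $D'$ of a countable dense set $D\subset\R^2$; second, a tightness criterion ruling out the appearance of spurious limit paths. The finite-dimensional statement is a direct application of Donsker's invariance principle, together with the observation that once two rescaled walks come within $O(n^{-1/2})$ of each other they coalesce, which matches the coalescence rule of the limit. The tightness criterion requires a bound on the number of genuinely distinct coalescing walks in $Y^{(n)}$ that can cross a macroscopic space-time rectangle; this follows from classical hitting-probability estimates for simple random walks combined with the coalescence mechanism, which on diffusive scales leaves only $O(n^{1/2})$ surviving walks out of the $\Theta(n)$ starting points in any bounded region.

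For the joint convergence with the dual I would use that forward edges in $\Z^2_\e$ and dual backward edges in $\Z^2_\o$ are constructed deterministically from the same noise $(\xi_{(i,n)})$ and, by the parity convention in~\eqref{defY} and~\eqref{defYhat}, are non-crossing: the forward and backward edges at a common vertex use opposite signs of the same $\xi$ and therefore go to opposite diagonals. This gives joint finite-dimensional convergence of any fixed collection of forward and backward walks to coalescing forward and backward Brownian motions with the correct non-crossing interaction, once again by Donsker. The backward web satisfies the FINR criteria by the analogous estimates on the dual lattice, so each marginal is tight; joint tightness is automatic on the product of Polish spaces. Hence the joint weak convergence holds and Skorokhod representation yields the almost sure coupling.

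The principal obstacle is the tightness condition at the level of $(H,d_H)$ rather than at the level of individual paths: one must show that paths from the $\Theta(n)$ unsampled microscopic starting points stay uniformly close in the metric $d$ to paths starting from the countable skeleton, so that no additional limit paths appear outside the closure of the skeleton. Controlling this uniformly on compact sets is the core quantitative input; everything else in the plan is either a direct invariance-principle argument or a soft topological step, and the non-crossing of forward and backward edges makes the joint extension a modest addition once the marginal tightness is established.
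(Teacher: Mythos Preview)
Your strategy---establish weak convergence of $(Y^{(n)},\wh Y^{(n)})$ to $(B,\wh B)$ in the Polish space $(H,d_H)\times(\wh H,d_H)$ and then invoke Skorokhod's representation theorem---is exactly the paper's approach. The only difference is that the paper does not re-derive the joint weak convergence via the FINR tightness criteria as you outline; it simply cites it as the $b=0$ special case of Theorem~5.4 in~\cite{SS08}, and then applies Skorokhod representation on the separable metric spaces $(H,d_H)$ and $(\wh H,d_H)$.
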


The convergence in distribution in \eqref{YntoB} is the $b=0$ special case of Theorem 5.4 in~\cite{SS08}.
Since $(H,d_H)$ and $(\wh H,d_H)$ are separable metric spaces Skorokhod's representation theorem implies the existence of the coupling
and the almost sure convergence in Theorem~\ref{thm:YntoB}.

\section{Properties of the Brownian web distance}
\label{s:BWdist}

\subsection{Continuity properties of the Brownian web distance}
\label{ss:BWdistcont}

First we give the definition of the lower semicontinuous Brownian web distance below.

\begin{definition}\label{def:DBrLSC}
For any $(t,x,s,y)\in\R^4$ let $D^{\Br,\LSC}(t,x;s,y)$ denote the smallest $k\ge 0$
for which there exist points $(t,x)=(t_0,x_0),(t_1,x_1),\dots,(t_{k+1},x_{k+1})=(s,y)\in\R^2$ with $t_0<t_1<t_2<\dots<t_{k+1}$ and a continuous path $\pi:[t,s]\to\R$
so that for each $i=0,\dots,k$ there is a path $\gamma_i$ in the Brownian web so that $\pi(r)=\gamma_i(r)$ for all $r\in [t_i,t_{i+1}]$.
We set $D^{\Br,\LSC}(t,x;s,y)=+\infty$ if there is no such $k\ge 0$.
\end{definition}

The two versions of the Brownian web distance $D^\Br(t,x;s,y)$ and $D^{\Br,\LSC}(t,x;s,y)$ only differ in the case
when $(t,x)$ is an exceptional point in the Brownian web and the optimal path $\pi$ from $(t,x)$ to $(s,y)$ does not start with $B_{(t,x)}$ or if $B_{(t,x)}$ is not even defined uniquely.
In these cases $D^\Br(t,x;s,y)=D^{\Br,\LSC}(t,x;s,y)+1$.
Otherwise, the two functions are the same.
The function $D^{\Br,\LSC}$ is indeed lower semicontinuous.

\begin{proposition}\label{prop:DBrLSC}
On an event of probability one it holds for all $(t,x,s,y)\in\R^4$ that the mapping $(t,x,s,y)\mapsto D^{\Br,\LSC}(t,x;s,y)$ is lower semicontinuous.
\end{proposition}

The Brownian web distance $D^\Br$ clearly satisfies the triangle inequality but it fails to hold for $D^{\Br,\LSC}$.
For this reason $D^\Br$ is the natural notion of directed metric.
The version $D^{\Br,\LSC}$ has better continuity properties (lower semicontinuous, see Proposition~\ref{prop:DBrLSC})
and it arises as the natural limit of the discrete web distance (see Theorem~\ref{thm:DBetoDBr}).

Next, we show two further remarkable properties of the Brownian web distance.
First, we characterize the finiteness of the Brownian web distance between two space-time points by the fact
that the target is on the skeleton of the Brownian web.
Then we point out that the Brownian web distance on $\R^4$ is determined by its values on a countably infinite subset.
The proof of these statements are postponed to Subsection~\ref{ss:furtherprop}.

\begin{proposition}\label{prop:DBrfiniteness}
Almost surely for all $(t,x,s,y)\in\R^4$ with $t<s$, the Brownian web distance $D^\Br(t,x;s,y)$ is finite if and only if
$(s,y)$ is an interior point of the Brownian web path starting at some $(u,z)$ with $u<s$ and $z\in\R$,
that is, $B_{(u,z)}(s)=y$.
\end{proposition}

\begin{proposition}\label{prop:DBrrational}
The Brownian web distance $D^\Br(t,x;s,y)$ for all $(t,x;s,y)\in\R^4$ is determined by
the values $D^\Br(t,x;s,I)$ where $I=[u,v]$ and $t,x,s,u,v$ are rational.
\end{proposition}

\subsection{Left neighbourhoods of an interval}
\label{ss:BWdistregions}

Proposition~\ref{prop:DBrfiniteness} shows that $D^\Br(t,x;s,y)=\infty$ for any $(t,x,s,y)\in\R^4$
for which $(s,y)$ is not hit by a Brownian path in the Brownian web $B$, that is, for almost all $(s,y)$.
The same applies for the lower semicontinuous Brownian web distance $D^{\Br,\LSC}$ as well.
Hence, we define distances to intervals as follows.
Let $I\subset\R$ be a possibly infinite interval. For $s\in \mathbb R$ let
\begin{equation}\label{defDBrI}
D^{\Br,\LSC}(t,x;s,I)=\inf_{y\in I}D^{\Br,\LSC}(t,x;s,y),\qquad
Q_k(s,I)=\{(t,x):D^{\Br,\LSC}(t,x;s,I)\le k\}.
\end{equation}
These regions are connected subsets of $\R^2$ satisfying $Q_k\subseteq Q_{k+1}$.
Proposition~\ref{prop:regions} below  describes the boundaries of the regions $Q_k$.

We define below the backward paths $\rho_k^\pm$ which according to Proposition~\ref{prop:regions} turn out to be the boundary curves for the regions $Q_k$.
Fix an interval $I=[u,v]$ and let $\rho_0^+(t)=\wh B_{(s,v)}(t)$ and $\rho_0^-(t)=\wh B_{(s,u)}(t)$ for $t\le s$,
that is, the backward Brownian web trajectories starting at time $s$ at the two endpoints of the interval $I=[u,v]$.
Let $\tau_0=\sup\{t\le s:\rho_0^+(t)=\rho_0^-(t)\}$ denote the time when the two backward trajectories meet.
Given the paths $\rho_k^\pm(t)$ for $t\le s$ and their time of collision $\tau_k$, we define
\begin{equation}\label{defrho}
\rho_{k+1}^+(t)=\sup_{r\in[\max(t,\tau_k),s]}\wh B_{(r,\rho_k^+(r))+}(t),\qquad
\rho_{k+1}^-(t)=\inf_{r\in[\max(t,\tau_k),s]}\wh B_{(r,\rho_k^-(r))-}(t)
\end{equation}
for all $t\le s$ and we let $\tau_{k+1}=\sup\{t\le s:\rho_{k+1}^+(t)=\rho_{k+1}^-(t)\}$ be their time of collision.

The next results show that the curves $\rho_k^\pm$ are the boundaries for $Q_k$
and that they arise as Brownian paths reflected off one another in the Skorokhod sense.
The proof of these results are postponed to Subsection~\ref{ss:properties}.

\begin{proposition}\label{prop:Skorokhod}
Let $I=[u,v]\subset\R$ and $s\in\R$ be fixed.
Assume that for all $j=0,1,\dots,k$ the curves $\rho_j^\pm(t)$ are given on $t\in[\tau_j,s]$.
Conditionally given these curves the distributions of $\rho_{k+1}^\pm(t)$
are reflected backward Brownian paths off $\rho_k^\pm(t)$ in the Skorokhod sense on $t\in[\tau_k,s]$
and independent Brownian motions until collision at $\tau_{k+1}$ on $t\in[\tau_{k+1},\tau_k]$.
In particular, $\rho_{k+1}^\pm(t)$ are continuous.
\end{proposition}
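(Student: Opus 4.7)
My plan is to argue by induction on $k$, combining two standard ingredients: (i) the reflection--independence property of the backward Brownian web along a web path (in the spirit of Soucaliuc--T\'oth--Werner), which asserts that conditionally on a backward path $\rho$, the restrictions of $\wh B$ to the open half-planes $\{x>\rho(t)\}$ and $\{x<\rho(t)\}$ are independent backward Brownian webs, each reflected off $\rho$ in the Skorokhod sense; and (ii) the Skorokhod representation formula, which says that if $W$ is a backward Brownian motion with $W(s)=\rho(s)$ and $\rho$ is continuous, then the Skorokhod reflection of $W$ off $\rho$ from above is
\begin{equation}
Y(t)=\sup_{r\in[t,s]}\bigl(\rho(r)+W(t)-W(r)\bigr).
\end{equation}

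For the base case $k=0$, condition on $\rho_0^+=\wh B_{(s,v)}$ and apply (i) to obtain an independent backward Brownian web in the half-plane $\{x>\rho_0^+(t)\}$. The rightmost outgoing paths $\wh B_{(r,\rho_0^+(r))+}$ with $r\in[\tau_0,s]$ take values in this outer web and are consistently coupled through coalescence, so their pointwise supremum $\rho_1^+(t)$ matches the right-hand side of (ii) with driving Brownian motion $W$ given by the outer-web path started at $(s,\rho_0^+(s))$. This identifies $\rho_1^+$ as the Skorokhod reflection of $W$ off $\rho_0^+$ on $[\tau_0,s]$, and continuity is inherited from continuity of the Skorokhod map on continuous drivers. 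The argument for $\rho_1^-$ is symmetric, using the independent outer web below $\rho_0^-$. The inductive step is identical with $\rho_k^\pm$ in place of $\rho_0^\pm$: condition on $(\rho_j^\pm)_{j\le k}$, apply (i) along $\rho_k^+$ and $\rho_k^-$ separately to decouple the two outer regions, and invoke (ii) on each side. For $t\in[\tau_{k+1},\tau_k]$ the obstacles $\rho_k^\pm$ have already merged, so the sup and inf below $\tau_k$ evolve as free backward Brownian motions; these two motions are independent because they propagate down from the two outer webs (one above and one below $\rho_k^\pm$) that were independent by (i), continuing through the $(1,2)$ branching structure at $(\tau_k,\rho_k^\pm(\tau_k))$, and they coalesce at the first meeting time $\tau_{k+1}$.

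The main technical obstacle is applying reflection--independence (i) along the \emph{random} curves $\rho_k^\pm$, which are not measurable with respect to any deterministic $\sigma$-field but are constructed from $\wh B$ via $k$ nested sup/inf operations. One would handle this either by conditioning inductively on the entire backward web inside the strip bounded by $\rho_{k-1}^\pm$, so that $\rho_k^\pm$ become measurable with respect to an augmented $\sigma$-field against which the outer webs still enjoy reflection--independence; or by approximating $\rho_k^\pm$ uniformly by piecewise deterministic curves, applying (i) along each approximation, and passing to the limit using the Hausdorff continuity of the Brownian web guaranteed by Theorem~\ref{thm:Brweb} and the continuity of the Skorokhod map.
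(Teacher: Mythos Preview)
Your overall strategy—invoke a Soucaliuc--T\'oth--Werner reflection/independence property along $\rho_k^\pm$ and read off the Skorokhod formula—is reasonable and different from the paper's route, but there are two genuine gaps.

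First, the passage from the supremum in \eqref{defrho} to the Skorokhod formula is not just ``ingredient (ii)''. You assert that $\sup_{r\in[t,s]}\wh B_{(r,\rho_k^+(r))+}(t)$ equals $\sup_{r\in[t,s]}\bigl(\rho_k^+(r)+W(t)-W(r)\bigr)$ for a \emph{single} driving Brownian motion $W$, namely ``the outer-web path started at $(s,\rho_0^+(s))$''. But the paths $\wh B_{(r,\rho_k^+(r))+}$ for different $r$ are not increments of one Brownian motion; they are distinct coalescing paths. One can argue, using coalescence and the fact that the path from $(s,\rho_k^+(s))$ touches $\rho_k^+$ on its local-time set, that all other boundary paths stay weakly below it, but you do not supply this argument, and it is exactly the nontrivial identification the paper devotes Proposition~\ref{prop:rhochar} and Proposition~\ref{prop:WfstarSkorokhod} to.

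Second, you correctly flag that STW-type reflection--independence is stated for genuine dual web paths, whereas $\rho_k^\pm$ for $k\ge1$ are not web paths. Your proposed fix (a), conditioning on the entire web inside the strip bounded by $\rho_{k-1}^\pm$, does not obviously yield that the outside is a web Skorokhod-reflected off $\rho_k^\pm$: the spatial Markov property of the Brownian web gives independence across a dual path, not across an envelope of dual paths, and making this precise is the heart of the matter. Fix (b), approximation by piecewise deterministic curves, runs into the problem that reflection--independence is not known to be stable under such limits.

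By contrast, the paper avoids invoking any black-box reflection result along $\rho_k^\pm$. It constructs explicit approximants $\wt\rho_{k+1}^{+,n}$ that follow backward web paths started at height $\rho_k^++2^{-n+1}$ and are reset upward whenever they come within $2^{-n}$ of $\rho_k^+$; these are monotone in $n$ (Lemma~\ref{lemma:rhodecreasing}) and their limit is identified with $\rho_{k+1}^+$ (Proposition~\ref{prop:rhochar}). The law of the limit is then pinned down by a two-coupling trick (Proposition~\ref{prop:WfstarSkorokhod}): under a ``Brownian web coupling'' the approximants decrease to a limit $W_{f*}\ge W_{f\uparrow}$, while under a ``common driver'' coupling the same marginals satisfy $\wt W_{f,n}\le W_{f\uparrow}+2^{-n+1}$, forcing $W_{f*}=W_{f\uparrow}$. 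This bypasses the need to condition the web on $\rho_k^\pm$ altogether. Your plan could likely be completed, but as written it defers precisely the step the paper works hardest on.
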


\begin{proposition}\label{prop:regions}
Let $I=[u,v]\subset\R$ and $s\in\R$ be fixed.
The union of the interval $s\times I$ and the curves $\rho_k^\pm(t)$ on $t\in[\tau_k,s]$ is the boundary of the region $Q_k$ given in \eqref{defDBrI}.
\end{proposition}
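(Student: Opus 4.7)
I would prove the statement by induction on $k\ge 0$, using throughout two classical properties of the Brownian web and its dual: the forward--backward path duality ($B_{(t,x)}(s)\le y$ iff $x\le \wh B_{(s,y)}(t)$ at deterministic points, with $\pm$ refinements elsewhere) and the forward--backward non-crossing property. The continuity of $\rho_k^\pm$ furnished by Proposition~\ref{prop:Skorokhod} is used throughout.

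The base case $k=0$ follows directly from duality: $Q_0$ is the set of $(t,x)$ whose forward web path lands in $\{s\}\times I$ at time $s$, which is exactly the region bounded above by $\rho_0^+=\wh B_{(s,v)}$, below by $\rho_0^-=\wh B_{(s,u)}$, and closed by $\{s\}\times I$ on the right until the coalescence time $\tau_0$.

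For the inductive step, by symmetry it suffices to identify the upper boundary of $Q_{k+1}$ with $\rho_{k+1}^+$. For the forward containment, i.e.\ $(t,x)\in Q_{k+1}\Rightarrow x\le \rho_{k+1}^+(t)$, I would take a witnessing continuous path with first jump at $t_1$; the after-jump endpoint $(t_1,\pi(t_1))$ lies in $Q_k$ so $\pi(t_1)\le \rho_k^+(t_1)$ by induction, while the pre-jump segment coincides with $B_{(t,x)}|_{[t,t_1]}$. Assuming the nontrivial case $x>\rho_k^+(t)$, continuity yields a first meeting time $r^\ast\in(t,t_1]$ with $B_{(t,x)}(r^\ast)=\rho_k^+(r^\ast)$. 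At $(r^\ast,\rho_k^+(r^\ast))$ the incoming forward path $B_{(t,x)}$ is sandwiched locally by the two outgoing backward paths $\wh B_{(r^\ast,\rho_k^+(r^\ast))\pm}$, and pushing this inequality back to time $t$ using coalescence of the backward web gives $x\le \wh B_{(r^\ast,\rho_k^+(r^\ast))+}(t)\le \rho_{k+1}^+(t)$ by the supremum in \eqref{defrho}. For the reverse containment, starting with $\rho_k^+(t)<x\le \rho_{k+1}^+(t)$, the supremum definition yields some $r\in[\max(t,\tau_k),s]$ with $x\le \wh B_{(r,\rho_k^+(r))+}(t)$; the forward path $B_{(t,x)}$ is trapped between $\rho_k^+$ below and this backward path above, so it meets $\rho_k^+$ at a $(1,2)$ point, where a jump to the adjacent outgoing branch (which lies in $Q_k$) followed by at most $k$ additional jumps supplied by the inductive hypothesis produces a path with at most $k+1$ jumps.

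The hardest step is justifying the local sandwich at the meeting point $(r^\ast,\rho_k^+(r^\ast))$ in both directions of the induction: one must verify that this is a $(1,2)$ point (so that branching is available and two distinct outgoing backward paths exist), that one of the outgoing forward branches indeed enters $Q_k$, and that the sandwich inequality by $\wh B_{(r^\ast,\rho_k^+(r^\ast))\pm}$ propagates all the way back to time $t$ rather than only locally near $r^\ast$. These meeting points are precisely the exceptional times at which $\rho_k^+$ switches between underlying backward paths in the envelope construction, i.e.\ the reflection times identified in the Skorokhod picture of Proposition~\ref{prop:Skorokhod}; coalescence of the backward web is what extends the sandwich globally. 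A secondary subtlety is that $(t,x)$ can itself be an atypical point where the forward path is not unique, and one must work with the $B_{(t,x)\pm}$ refinements to avoid edge cases.
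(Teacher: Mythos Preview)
Your approach is genuinely different from the paper's. You work directly with the envelope formula \eqref{defrho} via first-meeting-time and sandwich arguments, whereas the paper never manipulates that supremum directly in this proof. Instead it builds (in Section~\ref{ss:properties}) an explicit monotone approximation: curves $\wt\rho_{k+1}^{+,n}$ that follow backward web trajectories started at height $\rho_k^++2^{-n+1}$ and are reset to that level whenever they drop to within $2^{-n}$ of $\rho_k^+$. Proposition~\ref{prop:rhochar} shows these decrease to $\rho_{k+1}^+$. The proof of Proposition~\ref{prop:regions} is then a clean dichotomy on whether $B_{(t,x)}$ hits $Q_k$: if it does, $B_{(t,x)}$ cannot cross any $\wt\rho_{k+1}^{+,n}$ (each is a concatenation of backward web segments with \emph{upward} jumps, hence impenetrable from below by a forward path), giving $x\le\wt\rho_{k+1}^{+,n}(t)$ for every $n$; if instead $B_{(t,x)}$ stays a uniform distance $\varepsilon>0$ above $\rho_k^+$ (continuity of $\rho_k^+$ from Proposition~\ref{prop:Skorokhod} is what produces $\varepsilon$), then $B_{(t,x)}$ lies above $\wt\rho_{k+1}^{+,n}$ once $2^{-n+1}\le\varepsilon$, giving $x\ge\wt\rho_{k+1}^+(t)$. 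No $(1,2)$-point analysis enters.

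Your direct route is appealing, but the difficulties you flag as the ``hardest step'' are genuine gaps rather than routine technicalities, and the paper's approximation is precisely the device that fills them. In your reverse containment, when $x=\rho_{k+1}^+(t)$ the supremum in \eqref{defrho} need not be attained at any single $r$, so there is no dominating backward path under which to trap $B_{(t,x)}$; the monotone approximations substitute for this missing compactness. In your forward containment the sandwich is essentially right, but the bound $\wh B_{(r^\ast,\rho_k^+(r^\ast))+}(t)\le\rho_{k+1}^+(t)$ requires $r^\ast\in[\max(t,\tau_k),s]$, which you have not secured when $t<\tau_k$. What your approach would buy, if completed, is bypassing Proposition~\ref{prop:rhochar} altogether; what the paper's approach buys is that the non-crossing step becomes trivial and the boundary case $x=\rho_{k+1}^+(t)$ needs no separate treatment.
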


\subsection{Left neighbourhoods in the discrete web}

Similarly to \eqref{defDBrI}, we introduce
\begin{equation}\label{defRk}
D^\Be(i,n;j,I)=\inf_{m\in I}D^\Be(i,n;j,m),\qquad R_k=\{(i,n):D^\Be(i,n;j,I)\le k\}
\end{equation}
for the discrete web distance for any interval $I\subset\Z$.
$R_k$ is the discrete analogue of the region $Q_k$ in \eqref{defDBrI} which depends on the choice of $j$ and $I$.
The regions are connected subsets of $\Z^2_e$ for which $R_k\subseteq R_{k+1}$ clearly holds.
Next we define the backward paths $r_k^\pm$ which according to Proposition~\ref{prop:discreteregions} turn out to serve as boundary curves for the regions $R_k$.

Let $j\in\Z$ and $I=[u,v]\subset\Z$ be fixed such that $(j,u),(j,v)\in\Z^2_\e$.
We let $r_0^+(i)=\wh Y_{(j,v+1)}(i)$ and $r_0^-(i)=\wh Y_{(j,u-1)}(i)$ for $i=j,j-1,\dots$
which are the backward discrete Brownian web trajectories starting from the points $(j,v+1)$ and $(j,u-1)$ on the dual lattice $\Z^2_\o$ at the endpoints of $j\times I$.
Then we let $T_0=\max\{i\le j:r_0^+(i)=r_0^-(i)\}$ be the time when the two backward trajectories meet.

Next we define the two sequences of paths $r_k^\pm$ starting at $(j,v+1)$ and $(j,u-1)$ by induction on $k$ as follows.
We use the convention $r_k^+(j+1)=v$ and $r_k^-(j+1)=u$ for all $k$.
Given $r_k^\pm(i)$ for $i\in[T_k,j]$ we define
\begin{equation}\label{defr}
r_{k+1}^+(i)=\max_{l\in\{i,\dots,j\}}\wh Y_{(l,r_k^+(l+1)+1)}(i),\qquad
r_{k+1}^-(i)=\min_{l\in\{i,\dots,j\}}\wh Y_{(l,r_k^-(l+1)-1)}(i)
\end{equation}
for all $i\le j$ and we let $T_{k+1}=\max\{i\le T_k:r_{k+1}^+(i)=r_{k+1}^-(i)\}$.
Note that $r_{k+1}^+(j)=v+1$ and $r_{k+1}^-(j)=u-1$ holds automatically by \eqref{defr}.
The definition means that $r_{k+1}^+$ is the maximum of all backward discrete Brownian web paths
starting at the points of $r_k^+$ and $r_{k+1}^-$ is the minimum of the paths starting along $r_k^-$.

The next results are the discrete analogues of Propositions~\ref{prop:Skorokhod} and \ref{prop:regions}.
The statements are proved in Subsection~\ref{ss:discreteproperties}.

\begin{proposition}\label{prop:discreteSkorokhod}
Conditionally given the curves $r_j^\pm$ for all $j=0,1,\dots,k$,
the path $r_{k+1}^+$ evolves as the discrete upward Skorokhod reflection of a backward random walk off $r_k^+$ until time $T_k$.
More precisely there exists a backward random walk $s_{k+1}$ such that
\begin{equation}\label{rkSkorokhod}
r_{k+1}^+(i)=s_{k+1}(i)-\min_{l\in\{i,\dots,j\}}\left(s_{k+1}(l)-r_k^+(l+1)-1\right)
\end{equation}
holds for all $i\in[T_k,j]$.
Furthermore, there are backward random walks $s_0,s_1,\dots,s_k$ such that $r_k^+$ can be represented as a discrete last passage percolation as
\begin{equation}\label{rkwithLPP}
r_k^+(i)=\max_{i=l_k\le l_{k-1}\le\dots\le l_{-1}=j-k}\sum_{m=0}^k\left(s_m(l_m)-s_m(l_{m-1})\right)+k+v+1.
\end{equation}
Similarly for $r_{k+1}^-$ which evolves as the discrete downward Skorokhod reflection of a backward random walk off $r_k^-$ until $T_k$ conditionally given the curves $r_k^\pm$.
\end{proposition}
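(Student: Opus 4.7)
The strategy is to establish the Skorokhod identity \eqref{rkSkorokhod} first and then derive the last passage percolation formula \eqref{rkwithLPP} by iterating it.

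For Part~1, I would construct $s_{k+1}$ inductively backward in time from $(j,v+1)$. The topmost envelope $r_{k+1}^+$ of the coalescing backward walks $\wh Y_{(l,r_k^+(l+1)+1)}$ evolves as follows: when $r_{k+1}^+(l)$ is strictly above the boundary $r_k^+(l+1)+1$, the envelope advances by $\pm 1$ according to the step of the single backward walk currently on top; at times where $r_{k+1}^+$ touches the boundary and a newly born walk takes over as top, the envelope may be pushed upward. I would set $s_{k+1}(l)=r_{k+1}^+(l)$ whenever no reflection has occurred, and extend $s_{k+1}$ through each reflection epoch by a fresh $\pm 1$ step coming from a $\xi$-variable at a site that has not been queried in the construction of $r_0^+,\dots,r_k^+$ or of $s_{k+1}$ up to time $l+1$. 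Letting $L(i)=r_{k+1}^+(i)-s_{k+1}(i)$, one checks directly that $L(j)=0$, that $L$ is non-negative and non-decreasing going backward, and that $L$ increases only at reflection epochs. The Lindley identity
\begin{equation}
L(i)=-\min_{l\in[i,j]}\bigl(s_{k+1}(l)-r_k^+(l+1)-1\bigr)
\end{equation}
then gives \eqref{rkSkorokhod}. The remaining point is that $s_{k+1}$ has iid $\pm 1$ increments and is conditionally independent of $r_0^+,\dots,r_k^+$, which follows because its construction only reads $\xi$-variables at sites that remain unexposed after revealing the earlier curves.

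For Part~2, I would rewrite \eqref{rkSkorokhod} as
\begin{equation}
r_{k+1}^+(i)=\max_{l\in[i,j]}\bigl(s_{k+1}(i)-s_{k+1}(l)+r_k^+(l+1)+1\bigr)
\end{equation}
and iterate with base case $r_0^+(\cdot)=s_0(\cdot)=\wh Y_{(j,v+1)}(\cdot)$ and $r_0^+(j+1)=v$. The nested maxima collapse into a single maximum over chains $i=l_k\leq l_{k-1}\leq\dots\leq l_{-1}$ of time indices, the $+1$ from each reflection level contributes $+k$, and $s_0(l_{-1})$ together with the initial condition accounts for $v+1$. Tracking how the admissible range of each iterated maximum shrinks by one time step per iteration yields the boundary condition $l_{-1}=j-k$, and collecting constants produces \eqref{rkwithLPP}.

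The main obstacle is Part~1: arranging the construction so that at every reflection event a fresh, previously untouched $\xi$-variable can be assigned to the next step of $s_{k+1}$, and then rigorously verifying that the resulting $s_{k+1}$ is a genuine backward simple random walk conditionally independent of $r_0^+,\dots,r_k^+$. This requires careful bookkeeping of which $\xi$-variables each curve has queried, exploiting that the sites visited by $r_{k+1}^+$ and the sites used for the fresh steps lie strictly above the union of the previously built paths. Once the independence is in place, the remainder of the argument is a routine algebraic verification.
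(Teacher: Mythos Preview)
Your approach is essentially the same as the paper's. The paper also first establishes the local evolution rule for $r_{k+1}^+$ (isolated as a separate Proposition~\ref{prop:rchar}): it equals a free backward web step $-\xi_{(i-1,r_{k+1}^+(i))}$ when $r_{k+1}^+(i)>r_k^+(i)$ and is forced to $+1$ when $r_{k+1}^+(i)=r_k^+(i)$. From there the paper verifies \eqref{rkSkorokhod} by a direct backward induction on $i$, separating the $l=i$ term in the minimum and checking that both sides obey the same recursion; this is algebraically equivalent to your Lindley decomposition. For the second part the paper iterates \eqref{rkSkorokhod} exactly as you propose; the endpoint $l_{-1}=j-k$ arises because each application of the induction hypothesis evaluates $r_{k-1}^+$ at the shifted argument $l+1$, and the paper notes that after relabelling indices the identity holds in distribution (so the $s_m$ in \eqref{rkwithLPP} need not literally coincide with those of \eqref{rkSkorokhod}).

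One small remark: the paper is rather terse about the conditional independence of $s_{k+1}$ from $r_0^+,\dots,r_k^+$, essentially leaving it implicit in the observation that the free steps use $\xi$-variables at heights strictly above $r_k^+$. Your plan to manufacture the reflection steps of $s_{k+1}$ from genuinely unqueried $\xi$'s (or, equivalently, from auxiliary coins on an enlarged space) is a cleaner way to make this point explicit, and is the only place where your write-up would add detail beyond the paper's.
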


\begin{proposition}\label{prop:discreteregions}
The paths $r_k^\pm(i)$ for $i=T_k,T_k+1,\dots,j$ together with the interval $j\times I$ serve as the boundary of the region $R_k$ for all $k=0,1,2,\dots$.
\end{proposition}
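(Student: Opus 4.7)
My plan is to prove Proposition~\ref{prop:discreteregions} by induction on $k$. For the base case $k=0$, the region $R_0=\{(i,n)\in\Z^2_\e:Y_{(i,n)}(j)\in I\}$ is bounded by $r_0^\pm$ and $j\times I$ by the discrete non-crossing property of forward and dual random walks: $r_0^\pm$ are dual backward walks starting at the dual-lattice points immediately outside the endpoints of $j\times I$, so $Y_{(i,n)}$ lands in $I$ at time $j$ iff it lies strictly between them at time $i$. The region ``pinches'' to a point at $T_0$: for $i\le T_0$ the range is empty and any forward walk starting there stays on one side of the coalesced dual path and so exits $I$ (above $v$ or below $u$) at time $j$, so $R_0$ at time $i$ is also empty.

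For the inductive step the key recursion is that $(i,n)\in R_{k+1}$ iff either $(i,n)\in R_k$, or there exists a first jump time $l\in\{i,\dots,j-1\}$ such that, writing $m=Y_{(i,n)}(l)$, the destination $(l+1,m-\xi_{(l,m)})$ lies in $R_k$. By the inductive hypothesis, the destination condition is equivalent to $r_k^-(l+1)<m-\xi_{(l,m)}<r_k^+(l+1)$, which for the upper boundary forces $Y_{(i,n)}(l)\le r_k^+(l+1)$. I would then argue the two inclusions separately. For \emph{containment}: the dual path $\wh Y_{(l,r_k^+(l+1)+1)}$ is strictly above $Y_{(i,n)}(l)$ at time $l$, so by backward non-crossing it is strictly above $n$ at time $i$; maximizing over $l$ gives $n<r_{k+1}^+(i)$. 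For \emph{reachability}: given $n<r_{k+1}^+(i)$ with $(i,n)\notin R_k$, pick $l^*$ attaining the maximum in \eqref{defr}; non-crossing yields $Y_{(i,n)}(l^*)\le r_k^+(l^*+1)$, and a case analysis on $\xi_{(l^*,Y_{(i,n)}(l^*))}$ shows either the walk already enters $R_k$ at time $l^*+1$ (in which case $(i,n)\in R_k$ by coalescence of forward walks with $Y_{(l^*+1,Y_{(i,n)}(l^*+1))}$), or a jump at time $l^*$ lands at $r_k^+(l^*+1)-1\in R_k$. The lower boundary is handled symmetrically using $\wh Y_{(l,r_k^-(l+1)-1)}$.

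The main obstacle I anticipate is the careful handling of edge cases, especially when the extremal $l^*$ has $l^*+1$ equal to or near the coalescence time $T_k$ of $r_k^\pm$. At exactly $l^*+1=T_k$ the prescribed destination $r_k^+(l^*+1)-1$ coincides with $r_k^-(l^*+1)-1$, which does not strictly lie inside $R_k$; one must argue that this degenerate case either does not occur as the extremum (using the constraint $i\ge T_{k+1}$ forced by the coalescence of $r_{k+1}^\pm$) or is resolved by choosing an alternative $l$ that still certifies reachability. A related subtlety is verifying that the forward walk from $(i,n)$ does not cross below $r_k^-$ before time $l^*+1$, which would send the jump destination past the lower boundary of $R_k$; this should follow from the extremality of $l^*$ together with the monotonicity $r_{k+1}^-\le r_k^-$ and a symmetric application of non-crossing for the lower dual path. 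Finally, the parity bookkeeping between $\Z^2_\e$ and $\Z^2_\o$, together with the endpoint convention $r_k^+(j+1)=v$, $r_k^-(j+1)=u$ at $l=j$, must be tracked carefully throughout.
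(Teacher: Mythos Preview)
Your approach differs from the paper's. The paper argues by a \emph{double} induction: first on $k$, and then, for fixed $k+1$, backwards on the time coordinate $i$. At each step $i\to i-1$ it checks that the upper boundary of $R_{k+1}$ moves according to the local evolution rule \eqref{rk+1evolution}--\eqref{rk+1evolution2} established in Proposition~\ref{prop:rchar}: if $r_{k+1}^+(i)>r_k^+(i)$ the boundary follows the backward web step $-\xi_{(i-1,r_{k+1}^+(i))}$, and if $r_{k+1}^+(i)=r_k^+(i)$ it is forced up by one. This local comparison avoids tracking the forward walk $Y_{(i,n)}$ over a long stretch and sidesteps every edge case you flag.

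Your direct argument via the max-formula \eqref{defr} is viable but the reachability half is not yet complete. From non-crossing you only obtain $Y_{(i,n)}(l^*)\le r_k^+(l^*+1)$, which by itself does not force the jump destination to be $r_k^+(l^*+1)-1$. The missing ingredient is the \emph{lower} bound $Y_{(i,n)}(l^*)>r_k^+(l^*)$, which holds because the forward walk from $(i,n)\notin R_k$ cannot enter $R_k$ and therefore, as long as $l^*\ge T_k$, cannot cross from above $r_k^+$ to below $r_k^-$ without passing through the nonempty slice of $R_k$. Combining both bounds with parity gives $Y_{(i,n)}(l^*)=r_k^+(l^*+1)$ and $\xi_{(l^*,Y_{(i,n)}(l^*))}=+1$, so the jump indeed lands at $r_k^+(l^*+1)-1$. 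You still need $l^*+1>T_k$ to ensure this point lies strictly above $r_k^-(l^*+1)$, and the case $T_{k+1}\le i<T_k$ (where one first follows the walk up to time $T_k$ and then applies the argument) must be handled separately. These are exactly the obstacles you anticipate; the paper's step-by-step induction on $i$ is what makes them disappear.
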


\section{Convergence of discrete web distance to Brownian web distance}
\label{s:convergence}

This section is devoted to the proof of Theorem~\ref{thm:DBetoDBr}, the main result about the epigraph convergence of $D^\Be_n$ to $D^{\Br,\LSC}$.
The convergence is proved to hold in $(\mathcal E_*,d_*)$ which is a compact metric space by Lemma 7.1 of~\cite{DV21}.
We first state the lower semicontinuous analogue of Lemma 7.3 of~\cite{DV21}
which provides a sufficient condition for the convergence in $(\mathcal E_*,d_*)$ to hold.
We postpone its proof to the end of the section.

\begin{lemma}\label{lemma:epigraphconv}
Let $f,f_n:\R^4\to\ol\R$ be lower semicontinuous functions.
Assume that for any convergent sequence $x_n\to x\in\R^4$ it holds that
\begin{equation}\label{epigraphcond1}
\liminf_{n\to\infty}f_n(x_n)\ge f(x).
\end{equation}
Further assume that for any $x\in\R^4$ we can find a convergent sequence $x_n\to x$ such that
\begin{equation}\label{epigraphcond2}
\limsup_{n\to\infty}f_n(x_n)\le f(x).
\end{equation}
holds.
Then the epigraphs converge, that is, $\epi f_n\to\epi f$ in $\mathcal E_*$ as $n\to\infty$.
\end{lemma}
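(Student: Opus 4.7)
The plan is to identify the two hypotheses \eqref{epigraphcond1}--\eqref{epigraphcond2} with the standard pair of Kuratowski upper and lower inclusions for epigraphs, and to transport these inclusions to Hausdorff convergence of the $E$-images, in the same spirit as Lemma 7.3 of~\cite{DV21}. Since $(\mathcal E_*,d_*)$ is compact, it suffices to verify that every $d_*$-subsequential limit $\Gamma$ of $\epi f_n$ coincides with $\epi f$; after passing to a subsequence and relabelling, we may therefore assume $\epi f_n\to\Gamma$ in $d_*$ and prove the two inclusions $\epi f\subseteq\Gamma$ and $\Gamma\subseteq\epi f$. Note that condition \eqref{epigraphcond1} automatically propagates to subsequences by the familiar interleaving trick: if $x_{n_k}\to x$ with $\liminf_k f_{n_k}(x_{n_k})<f(x)$, one extends $x_{n_k}$ to a full sequence by setting $x_n=x$ on the complementary indices, contradicting \eqref{epigraphcond1}.

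For $\epi f\subseteq\Gamma$, fix $(x,y)\in\epi f$, so that $y\ge f(x)$. When $y\in\R$, condition \eqref{epigraphcond2} produces $x_n\to x$ with $\limsup_n f_n(x_n)\le f(x)\le y$; setting $y_n=\max(y,f_n(x_n))$ we obtain $(x_n,y_n)\in\epi f_n$ and $y_n\to y$. When $y=\infty$ we simply take $y_n=\infty$, which lies in $\epi f_n$ automatically. In both cases continuity of $E$ on $\R^4\times\ol\R$ yields $E(x_n,y_n)\to E(x,y)$, so $(x,y)$ lies in the Hausdorff limit $\Gamma$.

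For the reverse inclusion $\Gamma\subseteq\epi f$, let $(x,y)\in\Gamma$. Hausdorff convergence of the $E$-images produces points $(x_n,y_n)\in\epi f_n$ with $E(x_n,y_n)\to E(x,y)$; since the first coordinate of $E$ is the identity on $\R^4$, this forces $x_n\to x$, after which monotonicity of the second coordinate of $E$ in $s$ gives $y_n\to y$ in $\ol\R$. The case $y=\infty$ is immediate, and for $y\in\R$ the epigraph inequality $y_n\ge f_n(x_n)$ combined with the subsequence form of \eqref{epigraphcond1} gives
\begin{equation*}
y=\lim_{n\to\infty}y_n\ge\liminf_{n\to\infty}f_n(x_n)\ge f(x),
\end{equation*}
so $(x,y)\in\epi f$. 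The main technical obstacle is translating Hausdorff convergence under the compactification $E$ into genuine coordinatewise convergence: because the factor $e^{-|r|}$ in the definition of $E$ collapses fibers at spatial infinity, one has to verify that the Hausdorff pairings witnessing $\epi f_n\to\Gamma$ near a point $(x,y)$ with $x$ bounded can be chosen so that the approximating $x_n$ stay bounded rather than escape to infinity. This is handled by the condition that elements of $\mathcal E_*$ meet every vertical fiber $\{x\}\times\ol\R$, which rules out spurious matches produced by the decay of $e^{-|r|}$.
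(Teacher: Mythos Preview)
Your argument is correct and follows essentially the same route as the paper's: pass to a subsequential limit $\Gamma$ via compactness of $(\mathcal E_*,d_*)$ and identify $\Gamma$ with $\epi f$, the only difference being that the paper introduces the auxiliary function $g(x)=\inf\{z:(x,z)\in\Gamma\}$ and proves $g=f$ pointwise, which is equivalent to your two set inclusions $\epi f\subseteq\Gamma$ and $\Gamma\subseteq\epi f$. Your closing worry about spurious Hausdorff matches at spatial infinity is in fact unnecessary, since the first four coordinates of $E$ are the identity on $\R^4$ and hence any sequence with $E(x_n,y_n)\to E(x,y)$ automatically has $x_n\to x$.
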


The strategy of the proof of Theorem~\ref{thm:DBetoDBr} is
to understand the level curves of the distance functions $D^\Be_n$ and $D^{\Br,\LSC}$.
One could hope that the convergence of these curves is a deterministic consequence of the random walk web converging to the Brownian web.
We do not have a proof of this.
The main difficulty is that the random walk distance uses microscopic information (two curves being a single edge away from each other)
that does not behave well in the scaling limit (which only sees two curves being less-than-scaling away from each other).
The remedy we use is Skorokhod reflection.

We fix an $s\in\R$ and an interval $I=[u,v]$.
For all $n$, we define the regions $R_k=R_k(ns,n^{1/2}I)$ given in \eqref{defRk} with target time $ns$ and target interval $n^{1/2}I=[n^{1/2}u,n^{1/2}v]$.
The boundary curves of these regions are the curves $r_k^\pm$ with starting points $(ns,n^{1/2}v)$ and $(ns,n^{1/2}u)$ by Proposition~\ref{prop:discreteregions}.
Let the rescaled boundary curves be denoted by
\begin{equation}\label{defrkn}
r_{k,n}^\pm(t)=n^{-1/2}r_k^\pm(nt)
\end{equation}
for all $t\le s$ and with linear interpolation between integer values.
In the next proposition, we establish the uniform convergence of the rescaled curves $r_{k,n}^\pm$
to $\rho_k^\pm$ on compact intervals in probability as $n\to\infty$ under the coupling given in Theorem~\ref{thm:YntoB}.
The proof is based on Donsker's invariance principle and by the continuity of Skorokhod reflection.
Furthermore, the convergence of single curves can be improved to countable families.

\begin{proposition}\label{prop:regionconv}
Consider a countable collection of $s_i,I_i$ where $s_i\in\R$ and $I_i\subset\R$ is a closed interval for all $i$.
Then for all $i$ the rescaled boundary curves $r_{k,n}^\pm$ of the regions $R_k(ns_i,n^{1/2}I_i)$ defined in \eqref{defrkn}
converge in probability to the boundaries $\rho_k^\pm$ of $Q_k(s_i,I_i)$ uniformly on compact sets.
\end{proposition}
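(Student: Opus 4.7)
The plan is to reduce the countable-collection statement to a single $(s,I)$ with convergence for all $k\in\N$ by taking a countable intersection of full-probability events, which then covers both the countable family $(s_i,I_i)$ and the finitely-many-$k$ restriction. I would work on the almost sure event from Theorem~\ref{thm:YntoB}, on which $\wh Y^{(n)}\to\wh B$ in the Hausdorff topology, and proceed by induction on $k$. For the base case, the rescaled curves $r_{0,n}^\pm$ are single backward random walk paths whose starting points converge to the deterministic points $(s,u)$ and $(s,v)$, while $\rho_0^\pm=\wh B_{(s,u)},\wh B_{(s,v)}$. By Theorem~\ref{thm:Brweb}(i) the Brownian web paths starting at these deterministic points are almost surely unique, so Hausdorff convergence of $\wh Y^{(n)}$ yields uniform convergence of $r_{0,n}^\pm$ to $\rho_0^\pm$ on compact subsets of $(-\infty,s]$.

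For the inductive step, assume $r_{k,n}^\pm\to\rho_k^\pm$ uniformly on compacts. Proposition~\ref{prop:discreteSkorokhod} supplies a backward random walk $s_{k+1,n}$, constructed measurably from the discrete dual web, such that $r_{k+1,n}^+$ is the discrete Skorokhod reflection of $s_{k+1,n}$ off $r_{k,n}^+$ on the appropriate rescaled interval ending at time $s$. Donsker's invariance principle gives convergence of the rescaled $s_{k+1,n}$ to a standard backward Brownian motion $W_{k+1}$, and since the Skorokhod reflection map is continuous in the uniform topology, pairing this with the induction hypothesis gives uniform-on-compacts convergence of $r_{k+1,n}^+$ to the Skorokhod reflection of $W_{k+1}$ off $\rho_k^+$ on $[\tau_k,s]$. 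Proposition~\ref{prop:Skorokhod} identifies this limit with $\rho_{k+1}^+$. On the earlier interval $[\tau_{k+1},\tau_k]$ where $\rho_{k+1}^\pm$ evolve as independent backward Brownian paths until their collision, the Hausdorff convergence of $\wh Y^{(n)}\to\wh B$ transfers the analogous discrete convergence directly. The lower curve $r_{k+1,n}^-$ is handled symmetrically.

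The main obstacle I anticipate is the coupling: Donsker alone yields only distributional convergence of $s_{k+1,n}$, whereas to chain through Skorokhod-reflection continuity on a single sample we need almost sure convergence. The resolution is that $s_{k+1,n}$ is a measurable concatenation of backward walks in $\wh Y^{(n)}$ starting from the points where the discrete maximum defining $r_{k+1,n}^+$ is attained; by the inductive hypothesis and the Hausdorff convergence, these walks converge to the corresponding segments of $\wh B$ that assemble the driving Brownian motion for $\rho_{k+1}^+$ in the representation underlying Proposition~\ref{prop:Skorokhod}, so $s_{k+1,n}\to W_{k+1}$ holds almost surely under the coupling of Theorem~\ref{thm:YntoB}. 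A secondary subtlety is that the argmax defining $s_{k+1,n}$ can be non-unique when $\rho_k^+$ grazes a special point of $\wh B$; continuity of Skorokhod reflection in its driver ensures the reflected path is insensitive to the choice of maximizer, so convergence of $r_{k+1,n}^+$ persists regardless.
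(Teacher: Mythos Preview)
Your inductive architecture and base case are fine, but the resolution you propose for the ``main obstacle'' does not work, and this is precisely the difficulty the paper singles out. The driving walk $s_{k+1,n}$ is assembled from backward-web increments taken at sites \emph{one lattice unit above} $r_{k,n}^+$; after rescaling, these launching points collapse onto $\rho_k^+$ itself. But $\rho_k^+$ passes through $(1,2)$ points of $\wh B$, where two distinct outgoing backward paths coexist, and Hausdorff convergence $\wh Y^{(n)}\to\wh B$ cannot tell you which of the two the discrete walk shadows. So there is no mechanism by which the coupling of Theorem~\ref{thm:YntoB} forces $s_{k+1,n}$ to converge almost surely to the \emph{particular} Brownian motion that drives $\rho_{k+1}^+$ in Proposition~\ref{prop:Skorokhod}. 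Your secondary remark about insensitivity to the choice of maximizer does not help: the problem is not non-uniqueness of the argmax but non-uniqueness of the limiting web path at the relevant starting point.

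The paper's proof avoids this by splitting the conclusion into two halves that are separately tractable. From the Hausdorff convergence one extracts only a one-sided almost sure bound: the closure $\Gamma_n$ of backward walks launched along $r_{k,n}^+$ has a subsequential Hausdorff limit $\Gamma\subset\wh B$ whose starting points lie on $\rho_k^+$, hence $\Gamma\subset\Gamma'$, the closure of all $\wh B$-paths from $\rho_k^+$; taking upper envelopes gives $\limsup_n r_{k,n}^+\le\rho_{k}^+$ pointwise almost surely (here the index is shifted by one relative to the induction). Independently, Donsker plus the continuity of Skorokhod reflection gives $r_{k,n}^+\to\rho_k^+$ \emph{in law} uniformly on compacts --- for this only distributional convergence of the driver is needed, which is unproblematic. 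An almost sure upper bound by the distributional limit forces almost sure convergence. This two-step trick is the actual content of the proof and is what your proposal is missing.
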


\begin{proof}[Proof of Proposition~\ref{prop:regionconv}]
We fix an $s\in\R$ and an $I=[u,v]\subset\R$ first and we consider $Q_k$ defined in \eqref{defDBrI}.
By Proposition~\ref{prop:regions}, $\rho_k^\pm(r)$ for $r\le s$ are boundary curves of $Q_k$.
We study the convergence to these curves.
The definition \eqref{defr} of $r_k^+$ can be written in terms of the rescaled boundary as
\begin{equation}
r_{k,n}^+(t)=\sup_{r\in[t,s]}\wh Y^{(n)}_{(r,r_{k-1,n}^+(r+1/n)+1/\sqrt n)}(t).
\end{equation}
for $t\le s$.

We fix the natural coupling of the random walk web and the Brownian web given in Theorem~\ref{thm:YntoB}.
In particular, the rescaled backward random walk web $\wh Y^{(n)}$ converges to $\wh B$
as compact collection of paths in the Hausdorff topology almost surely.
Let $\Gamma_n$ be the closure of the set of all paths in $\wh Y^{(n)}$ started from the points
$(r,r_{k-1,n}^+(r+1/n)+1/\sqrt n)$ for $r\in[t,s]$.
Since $\Gamma_n$ is a closed subset of the compact set $\wh Y^{(n)}$ and by the convergence $\wh Y^{(n)}\to\wh B$
in $(H,d_H)$, the sets $\Gamma_n$ have a limit $\Gamma$ that is a subset of $\wh B$.

We claim that $\Gamma$ is a subset of $\Gamma'$, the closure of the  set of all paths in $\wh B$ started from $(r,\rho_{k-1}^+(r))$.
Indeed, let $\gamma_n\in\Gamma_n$ be a convergent sequence.
Then the set of starting points converges and all starting points in the limit lie on $\rho_{k-1}^+$, since $r_{k-1,n}^+\to\rho_{k-1}^+$.

This implies that
\begin{equation}
\bigcup_{\gamma \in \Gamma_n} \operatorname{graph}(\gamma)
\to\bigcup_{\gamma \in \Gamma} \operatorname{graph}(\gamma)
\subset \bigcup_{\gamma \in \Gamma'} \operatorname{graph}(\gamma)
\end{equation}
in the Hausdorff topology, and by considering the upper boundaries of these sets, we get
\begin{equation}
\limsup_{n\to\infty}r_{k,n}^+(r)\le\rho_k^+(r)
\end{equation}
for all $r$ almost surely.
But note that $r_{k,n}^+\to\rho_k^+$ in law with respect to uniform convergence on compact sets
(by the Skorokhod reflection representation and Donsker's theorem).
These two statements imply $r_{k,n}^+\to\rho_k^+$ in probability uniformly on compact sets.
The argument works simultaneously for countably many pairs $(s,I)$ and for finitely many $k$ hence the proof is complete.
\end{proof}

Note that the proof above works only for countable collections of pairs $(s,I)$ and not for all $(s,I)$ because of the convergence in law part.

The statement of Proposition~\ref{prop:regionconv} can be realized by a coupling as almost sure convergence as follows.
Under the coupling given in Theorem~\ref{thm:YntoB} the region boundaries $r_{k,n}^\pm$ converge uniformly in probability to $\rho_k^\pm$.
Since the region boundaries are continuous, their reversed versions take values in the space $\Pi$ defined in \eqref{defPi}.
We have that the random walk web and its dual along with countably many region boundaries converge jointly in probability.
The product of $H\times\wh H$ with a countable number of copies of $\Pi$ is still separable.
Hence Skorokhod's representation theorem guarantees the existence of a coupling under which
the forward and backward random walk webs and countably many region boundaries converge almost surely.
We choose the points $s_i$ as being rational and $I_i$ as being the intervals with rational endpoints and we fix the coupling described above.

\begin{proof}[Proof of Theorem~\ref{thm:DBetoDBr}]
We check the two conditions of Lemma~\ref{lemma:epigraphconv}.
To see \eqref{epigraphcond1} for any sequence $x_n\to x$ in $\R^4$ we use the coupling described above.
This coupling ensures that as $(H,F_H)$-valued random variables the rescaled discrete web $Y^{(n)}$
converges to the Brownian web $B$ almost surely in $(H,d_H)$ as $n\to\infty$.

If the left-hand side of \eqref{epigraphcond1} is equal to $k$ for $D^\Be_n$, then there is a sequence of convergent starting points $(t^{(n)},x^{(n)})\to(t,x)$
and convergent endpoints $(s^{(n)},y^{(n)})\to(s,y)$ such that $D^\Be_n(t^{(n)},x^{(n)};s^{(n)},y^{(n)})=k$.
This means that for all $n$ there are $k+1$ paths $\pi_0^{(n)},\dots,\pi_k^{(n)}$ in $Y^{(n)}$
along the geodesic between $(t^{(n)},x^{(n)})$ and $(s^{(n)},y^{(n)})$ with $k$ jumps.
The convergence of $Y^{(n)}$ to $B$ in $(H,d_H)$ implies that for all $j=0,1,\dots,k$ and for all $n$ there are paths $\wt\pi_j^{(n)}$ in $B$
so that $d(\pi_j^{(n)},\wt\pi_j^{(n)})\to0$ for all $j=0,\dots,k$ by \eqref{defdH}.
Since $B$ as an element of $H$ is a compact collection of paths in $(\Pi,d)$,
the sequence $\wt\pi_j^{(n)}$ must have a subsequential limit $\wt\pi_j^{(\infty)}$ for all $j=0,\dots,k$.
One can find the limiting paths $\wt\pi_j^{(\infty)}$ so that they realize a geodesic for $D^\Br$ between $(t,x)$ and $(s,y)$.
This shows that $D^{\Br,\LSC}(t,x;s,y)\le k$ proving \eqref{epigraphcond1}.

Next, we use the coupling described after Proposition~\ref{prop:regionconv}
where the region boundaries converge for rationals $s$ and intervals $I$ with rational endpoints.
To prove the second condition \eqref{epigraphcond2} in Lemma~\ref{lemma:epigraphconv}, assume that $k:=D^{\Br,\LSC}(t,x;s,y)<\infty$.
Then we can find intervals $I_j=[u_j,v_j]$ with rational endpoints and rationals $s_j$ so that
\begin{itemize}
\item the line segment $\{s_j\}\times I_j$ intersects the geodesic from $(t,x)$ to $(s,y)$,
\item $|s_j-s|,|u_j-y|,|v_j-y|\le 1/j$.
\end{itemize}
Then $D^{\Br,\LSC}(t,x;s_j,I_j)\le k$.
To prove \eqref{epigraphcond2}, we construct a sequence of indices $n_j$ such that there are $t^{(n_j)},x^{(n_j)},s^{(n_j)},y^{(n_j)}\in\R$ with
$(t^{(n_j)},x^{(n_j)},s^{(n_j)},y^{(n_j)})\to(t,x,s,y)$ so that
\begin{equation}
\limsup_{j\to\infty}D^\Be_{n_j}(t^{(n_j)},x^{(n_j)};s^{(n_j)},y^{(n_j)})\le k.
\end{equation}
We define $n_j$ inductively. We set $n_0=1$, and given $n_{j-1}$ we let $n_j\ge n_{j-1}$ so that
the $k$th rescaled boundaries $r_{k,n_j}^\pm=n_j^{-1/2}r_k^\pm(n_j\cdot)$ corresponding to $s_j,I_j$ are at most $1/j$ away from their limits $\rho_k^\pm$ uniformly on the interval $[t-1,s_j]$.
This can be achieved by the coupling based on Proposition~\ref{prop:regionconv}.

Next, we choose a starting point $(t^{(n_j)},x^{(n_j)})$ which is at most $1/j$ away from $(t,x)$ so that $D^\Be_{n_j}(t^{(n_j)},x^{(n_j)};s_j,y^{(n_j)})\le k$
for some $y^{(n_j)}\in[u_j,v_j]$.
Then $(s^{(n_j)},y^{(n_j)})=(s_j,y^{(n_j)})$ is also at most $2/j$ far from $(s,y)$ showing the existence of the subsequence $n_j$ with the required properties.
This proves \eqref{epigraphcond2} and the theorem.
\end{proof}

\begin{proof}[Proof of Lemma~\ref{lemma:epigraphconv}]
The space $\mathcal E_*$ consists of closed subsets of $\R^4\times\ol\R$.
By the lower semicontinuity of $f_n$ the epigraphs $\epi f_n$ are closed.
Let $\Gamma$ denote a subsequential limit of $\epi f_n$ in $\mathcal E_*$. This exists by the compactness of $\mathcal E_*$.
Define the function $g:\R^4\to\ol\R$ by
\begin{equation}
g(x)=\inf\{z\in\ol\R:(x,z)\in\Gamma\}.
\end{equation}
Since $\Gamma$ is a closed subset of $\R^4\times\ol\R$ as an element of $\mathcal E_*$,
the function $g$ is lower semicontinuous and $\epi g=\Gamma$.
Next, we show that $g=f$.

On one hand, $\Gamma$ is a subsequential limit of $\epi f_n$ in $\mathcal E_*$.
Hence for any $x\in\R^4$, the point $(x,g(x))\in\Gamma$ can be approximated by points in $\epi f_n$, that is, there is a sequence $x_n\to x$ such that
\begin{equation}
\liminf_{n\to\infty}f_n(x_n)=g(x).
\end{equation}
Comparing this with \eqref{epigraphcond1}, which holds for any convergent sequence $x_n$ yields that $g\ge f$.

On the other hand, for any $x\in\R^4$ there is a convergent sequence $x_n$
for which $\lim_{n\to\infty}f_n(x_n)=f(x)$ holds by \eqref{epigraphcond2} and \eqref{epigraphcond1}.
Hence the convergence
\begin{equation}\label{Exnfnconv}
E(x_n,f_n(x_n))\to E(x,f(x))
\end{equation}
is also satisfied.
The convergence of a subsequence of the epigraphs $\epi f_n$ to $\Gamma$ in $\mathcal E_*$ means that
a subsequence of $E(\epi f_n)$ converges to $E(\Gamma)$ in the Hausdorff distance.
This fact together with the convergence \eqref{Exnfnconv} implies for the limit on the right-hand side of \eqref{Exnfnconv}
that $E(x,f(x))\in(E(\Gamma))_{+\varepsilon}$ for all $\varepsilon>0$.
Since $E(\Gamma)$ is a closed set, this means that $E(x,f(x))\in E(\Gamma)$ and $(x,f(x))\in\Gamma$, which yields $f\ge g$.
\end{proof}

\section{The KPZ limit}
\label{s:shear}

This section contains the proofs of Theorem~\ref{thm:shearlimit} and Theorem~\ref{thm:DBetoTW}.
Both proofs are based on Propositions~\ref{prop:regions} and \ref{prop:discreteSkorokhod} describing the boundary curves of the regions with different distances.  These curves are Brownian motions and random walks reflected off each other in the Skorokhod sense.
The distribution of the reflected paths can be represented as last passage values.

In the case of Theorem~\ref{thm:shearlimit} about the Brownian web distance we specialize Proposition~\ref{prop:regions} to the semiinfinite interval $I=\R_-=(-\infty,0]$.
According to the proposition the upper boundary of the region with distance at most $k$
\begin{equation}\label{rhokshear}
\rho_k^+(t)=\sup\{x\in\R:D^{\Br,\LSC}(t,x;0,\R_-)\le k\}=\sup\{x\in\R:D^\Br(t,x;0,\R_-)\le k\}
\end{equation}
for $t\le0$ is a backward Brownian motion reflected off $\rho_{k-1}^+$ in the Skorokhod sense.
The initial $\rho_0^+$ is a backward Brownian motion.
By Lemma~\ref{lemma:BLPPSkorokhod} below the processes $\rho_k^+(-t)$ for $k=0,1,2,\dots$ have the distribution of Brownian last passage percolation.

We introduce the Brownian last passage percolation with general boundary condition below, see also \eqref{defBLPP}.
Let $f:\R_+\to\R$ be a continuous function and for a non-negative integer $n$ and for $t\ge0$ let
the Brownian last passage percolation with boundary condition $f$ be given by
\begin{equation}\label{defBLPPwithboundary}
L^f(t,n)=\sup_{0\le t_1}\left(f(t_1)+L(t_1,2;t,n)\right)
=\sup_{0=t_0\le t_1\le\dots\le t_n=t}\left(f(t_1)+\sum_{i=2}^n(W_i(t_i)-W_i(t_{i-1}))\right)
\end{equation}
where $W_2(t),W_3(t),\dots,W_n(t)$ are independent standard Brownian motions.

\begin{lemma}\label{lemma:BLPPSkorokhod}
Let $f:\R_+\to\R$ be a fixed continuous function.
For the Brownian last passage percolation with boundary condition $f$ it holds that $L^f(t,1)=f(t)$ and for $n=2,3,\dots$, $L^f(t,n)$ can be represented as a Skorokhod reflection as
\begin{equation}\label{BLPPrecursion}
L^f(t,n)=W_n(t)-\inf_{s\in[0,t]}(W_n(s)-L^f(s,n-1)).
\end{equation}
\end{lemma}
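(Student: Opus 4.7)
My plan is to prove both claims directly from the definition \eqref{defBLPPwithboundary} by peeling off the last coordinate of the maximizing sequence. The $n=1$ case is immediate: the constraints $0=t_0\le t_1\le t_n=t$ with $n=1$ force $t_1=t$, and the sum $\sum_{i=2}^n$ is empty, so $L^f(t,1)=f(t_1)=f(t)$.

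For $n\ge 2$, the key observation is that in the formula
\begin{equation}
L^f(t,n)=\sup_{0=t_0\le t_1\le\dots\le t_n=t}\left(f(t_1)+\sum_{i=2}^n(W_i(t_i)-W_i(t_{i-1}))\right),
\end{equation}
the last summand is $W_n(t)-W_n(t_{n-1})$ since $t_n=t$, and this term depends only on $t_{n-1}$ while the remaining expression $f(t_1)+\sum_{i=2}^{n-1}(W_i(t_i)-W_i(t_{i-1}))$ depends only on $t_0,\dots,t_{n-1}$. I will therefore split the supremum in two stages: first fix $t_{n-1}=s\in[0,t]$, then maximize the inner expression, and finally optimize over $s$.

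Fixing $t_{n-1}=s$, the inner maximization over $0=t_0\le t_1\le\dots\le t_{n-1}=s$ gives exactly $L^f(s,n-1)$ by the very definition \eqref{defBLPPwithboundary} applied with $t$ replaced by $s$ and $n$ replaced by $n-1$. Thus
\begin{equation}
L^f(t,n)=\sup_{s\in[0,t]}\bigl(L^f(s,n-1)+W_n(t)-W_n(s)\bigr)=W_n(t)+\sup_{s\in[0,t]}\bigl(L^f(s,n-1)-W_n(s)\bigr),
\end{equation}
and rewriting the supremum as minus an infimum yields the desired identity
\begin{equation}
L^f(t,n)=W_n(t)-\inf_{s\in[0,t]}\bigl(W_n(s)-L^f(s,n-1)\bigr).
\end{equation}

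There is no genuine obstacle in this argument; the entire content is the decomposition of the constrained supremum at the penultimate index, together with the fact that $W_n$ is independent of $W_2,\dots,W_{n-1}$ only to the extent that one needs continuity of $L^f(\cdot,n-1)$ to ensure the inner supremum is attained as a Skorokhod reflection (which follows inductively since $f$ is continuous by assumption and the Brownian motions have continuous sample paths). This continuity is what lets one interpret \eqref{BLPPrecursion} as a genuine Skorokhod reflection of $W_n$ off the graph of $L^f(\cdot,n-1)$, which is the form used when applying the lemma to the curves $\rho_k^+(-t)$ through \eqref{rhokshear}.
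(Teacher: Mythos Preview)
Your proof is correct and follows exactly the same approach as the paper: peel off the last increment $W_n(t)-W_n(t_{n-1})$, recognise the inner supremum over $0=t_0\le\dots\le t_{n-1}=s$ as $L^f(s,n-1)$, then optimise over $s\in[0,t]$ and rewrite the resulting supremum as minus an infimum. The paper's proof is slightly terser (and in fact has a minor notational slip, writing $t_n$ where $t_{n-1}$ is meant), but the argument is identical.
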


\begin{proof}
For $n=1$, \eqref{BLPPrecursion} holds by definition.
By taking out the supremum over $t_n\in[0,t]$ in \eqref{defBLPPwithboundary} one gets
\begin{equation}
L^f(t,n)=\sup_{t_n\in[0,t]}\left(L^f(t_n,n-1)+W_n(t)-W_n(t_n)\right).
\end{equation}
This is the recursion \eqref{BLPPrecursion}.
\end{proof}

The second building block in the proof of Theorem~\ref{thm:shearlimit} is the following convergence result of Brownian last passage percolation
to the directed landscape from~\cite{DOV18,DV21}, e.g.\ Theorem 1.7 in~\cite{DV21}, see also \cite{FN11,F10,S15} for single-parameter versions.

\begin{proposition}\label{prop:rhoAiry}
The convergence of the rescaled processes
\begin{equation}\label{rhoAiry}
n^{1/6}\left(\rho_{tn+2zn^{2/3}}^+(-t)-2t\sqrt n-2zn^{1/6}\right)\to \mathcal L(0,0;z,t)
\end{equation}
holds as $n\to\infty$ in distribution uniformly in $z$ and $t>0$ in any compact region.
\end{proposition}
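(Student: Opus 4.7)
The plan is to identify $\rho_k^+(-t)$ with a Brownian last passage percolation value via Proposition~\ref{prop:Skorokhod} and Lemma~\ref{lemma:BLPPSkorokhod}, and then invoke the DOV scaling convergence of BLPP to the directed landscape.

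First, apply Proposition~\ref{prop:Skorokhod} to $I=\mathbb R_-$ at $s=0$. Since the right endpoint of $I$ is $0$, by induction $\rho_k^+(0)=0$ for all $k$. Writing $\tilde\rho_k(t):=\rho_k^+(-t)$ for $t\ge 0$ we obtain a sequence of curves all starting at $0$ for which $\tilde\rho_0$ is a standard Brownian motion $W_1$ and each $\tilde\rho_{k+1}$ is the Skorokhod upward reflection of a conditionally independent standard Brownian motion $W_{k+2}$ off $\tilde\rho_k$. Iterating the Skorokhod-to-last-passage identity of Lemma~\ref{lemma:BLPPSkorokhod} with boundary $f=W_1$ then yields the pathwise identity
\begin{equation*}
\tilde\rho_k(t) = L^{W_1}(t,k+1)=L(0,1;t,k+1),
\end{equation*}
where $L$ is the Brownian last passage percolation of~\eqref{defBLPP} built from the independent Brownian motions $W_1,\ldots,W_{k+1}$.

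Second, apply the DOV scaling convergence of BLPP to the directed landscape. Choose the DOV parameters $(\tilde s,\tilde x,\tilde t,\tilde y)$ so that the BLPP starts essentially at $(0,1)$, the level endpoint is $\lfloor tn+2zn^{2/3}\rfloor$, and the time endpoint is $t$. This forces the DOV landscape-time variable $\tilde t$ to track $t$ with a correction of order $n^{-1/3}$ that absorbs the level perturbation $2zn^{2/3}$, and the DOV spatial variable $\tilde y$ to compensate for the induced time shift so that the effective BLPP time remains $t$. A direct expansion then shows that the DOV centering $2(\tilde t-\tilde s)n^{1/2}+2(\tilde y-\tilde x)n^{1/6}$ reduces to $2t\sqrt n+2zn^{1/6}$ up to $O(n^{-1/2})$, matching~\eqref{rhoAiry}. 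Because DOV converges uniformly on compact subsets of its parameter space, we conclude
\begin{equation*}
n^{1/6}\bigl(L(0,1;t,\lfloor tn+2zn^{2/3}\rfloor)-2t\sqrt n-2zn^{1/6}\bigr) \to \mathcal L(0,0;z,t)
\end{equation*}
in law, uniformly for $(z,t)$ in compact subsets of $\mathbb R\times(0,\infty)$. Depending on the sign conventions of the substitution the DOV limit may emerge as $\mathcal L(0,0;-z,t)$, which agrees in joint distribution with $\mathcal L(0,0;z,t)$ by the reflection symmetry of the directed landscape inherited from the Airy line ensemble.

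Finally, the off-by-one index between $\tilde\rho_k(t)=L(0,1;t,k+1)$ and $L(0,1;t,k)$ is controlled by $|\rho_k^+(-t)-\rho_{k-1}^+(-t)|=O(n^{-1/2})$ for $k$ of order $n$ by standard one-step BLPP estimates, and is negligible after multiplication by $n^{1/6}$. The main obstacle is the careful bookkeeping of the DOV substitution: matching both endpoints simultaneously forces the DOV parameters to depend on $n$ in a coordinated way, and one has to verify that the subtracted centering aligns precisely with~\eqref{rhoAiry} and that the landscape argument matches the desired $\mathcal L(0,0;z,t)$. The identification of $\tilde\rho_k$ with BLPP through the Skorokhod structure is immediate from the cited propositions, so once the substitution is fixed the result follows directly from~\cite{DOV18}.
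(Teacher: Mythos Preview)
Your identification of $\rho_k^+(-t)$ with Brownian last passage percolation via the Skorokhod structure is correct and matches the paper's setup. The gap is in the second step, where you try to plug directly into the DOV convergence.

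The DOV scaling (as you quote it) has both BLPP coordinates of comparable order in $n$: the level is $\lfloor \tilde t n\rfloor$ and the time is $\tilde t+2\tilde y n^{1/3}$. In our problem the BLPP time is the fixed number $t$ while the level is $tn+2zn^{2/3}$. If you force a match as you describe, you get $\tilde t=t+2zn^{-1/3}$ and $\tilde y=-zn^{-2/3}$, so $\tilde y\to 0$. By uniform convergence on compacts the limit is then $\mathcal L(0,0;0,t)$, not $\mathcal L(0,0;z,t)$, and the $z$-dependence disappears from the limit object. Moreover, your claimed expansion of the centering is arithmetically off: with these values,
\[
2(\tilde t-\tilde s)n^{1/2}+2(\tilde y-\tilde x)n^{1/6}=2t\sqrt n+4zn^{1/6}+O(n^{-1/2}),
\]
not $2t\sqrt n+2zn^{1/6}$. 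The discrepancy is $2zn^{1/6}$, which after multiplication by $n^{1/6}$ diverges, so no appeal to reflection symmetry can repair it.

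What is missing is exactly the device the paper uses: the Brownian scaling identity $L(0,0;ns,k)\stackrel{\d}{=}n^{1/2}L(0,0;s,k)$. The paper first invokes the DV21 form of the convergence in the symmetric regime,
\[
n^{-1/3}\bigl(L(0,0;tn,tn+2zn^{2/3})-2tn-2zn^{2/3}\bigr)\to\mathcal L(0,0;z,t),
\]
and then applies Brownian scaling to pull the time argument from $tn$ down to $t$, which is precisely what converts the centering $2tn+2zn^{2/3}$ at scale $n^{1/3}$ into $2t\sqrt n+2zn^{1/6}$ at scale $n^{-1/6}$. Once you insert this one-line scaling step, your argument works and coincides with the paper's proof; without it, the substitution cannot reproduce both the correct centering and the correct $z$-dependence of the limit.
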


\begin{proof}[Proof of Proposition~\ref{prop:rhoAiry}]
With the notations of Theorem 1.7 and Remark 1.10 in~\cite{DV21},
we have $n=\sigma^3$ and $v=(1,-1),e=(0,-1)$.
On the other hand since $\|(x,-y)\|_d=2\sqrt{xy}$ for the Brownian last passage percolation, the Taylor expansion
\begin{equation}
\|v+xe\|_d=\|(1,-1-x)\|_d=2\sqrt{1+x}=2+x-\frac{x^2}4+\O(x^3)
\end{equation}
holds as $x\to0$.
Remark 1.9 in~\cite{DV21} implies that $\alpha=2,\beta=1,\chi/\tau^2=1/4$ and $\chi=1$ for the Brownian last passage percolation.
Then $\tau=2$ holds as a consequence and $v=(0,-2)$, so we can choose $s=0,x=0,y=z$ and leave $t>0$ a parameter.
By (1.6) in Remark 1.10 of~\cite{DV21} with $\wt\alpha=2,\wt\eta=-1$ we have $\wt\gamma=2,\wt\chi=\wt\omega=1$.
Note that Brownian last passage percolation in~\cite{DV21} is obtained by optimizing over down-right paths
whereas our definition \eqref{defBLPP} is the supremum over up-right paths.
Hence the Brownian last passage time between $(0,0)$ and $(tn,-tn-2zn^{2/3})$ in~\cite{DV21} translates into
\begin{equation}\label{BLPPasymp}
n^{-1/3}\left(L(0,0;tn,tn+2zn^{2/3})-2tn-2zn^{2/3}\right)\to\mathcal L(0,0;z,t).
\end{equation}
The distributional identity $L(0,0;ns,k)\stackrel{\d}=n^{1/2}L(0,0;s,k)$ for the Brownian last passage percolation follows from Brownian scaling.
Using it in \eqref{BLPPasymp} and writing the convergence in terms of the boundaries $\rho_k^+$ gives \eqref{rhoAiry}.
\end{proof}

\begin{proof}[Proof of Theorem~\ref{thm:shearlimit}]
First, we fix a compact interval $K$ for $z$.
The convergence in distribution in \eqref{rhoAiry} happens in the space of continuous functions.
We consider the supremum distance of continuous functions on $K$ which generates the topology of uniform convergence on $K$.
By Skorokhod's representation theorem, the convergence in distribution in \eqref{rhoAiry} on $K$
can be realized by a coupling of the sequence $\rho_k^+$ for $k=0,1,2,\dots$ and the limit as an almost sure convergence which is uniform on $K$.

The almost sure convergence which is uniform on $K$ in \eqref{rhoAiry} means that for almost all realizations of the randomness it holds that
for every $\varepsilon>0$ there exists a random $k_0$ such that for all $k\ge k_0$
\begin{equation}\begin{aligned}
2t\sqrt n+2zn^{1/6}+(\mathcal L(0,0;z,t)-\varepsilon)n^{-1/6}
&\le\rho_{tn+2zn^{2/3}}^+(-t)\\
&\le2t\sqrt n+2zn^{1/6}+(\mathcal L(0,0;z,t)+\varepsilon)n^{-1/6}
\end{aligned}\end{equation}
holds.
These bounds on the boundary imply that the distance $D^\Br$ with $\varepsilon$ replaced by $2\varepsilon$ satisfies
\begin{equation}\label{DBrupperlower}\begin{aligned}
D^\Br(-t,2t\sqrt n+2zn^{1/6}+(\mathcal L(0,0;z,t)+2\varepsilon)n^{-1/6};0,\R_-)&\ge tn+2zn^{2/3},\\
D^\Br(-t,2t\sqrt n+2zn^{1/6}+(\mathcal L(0,0;z,t)-2\varepsilon)n^{-1/6};0,\R_-)&\le tn+2zn^{2/3}.
\end{aligned}\end{equation}
Now let $m(n)=\sqrt n+\frac L{2t}n^{-1/6}$ where $L\in\R$ is a parameter.
Taylor expansion yields
\begin{equation}\label{LkTaylor}\begin{aligned}
2tm(n)+2z(m(n))^{1/3}&=2t\sqrt n+2zn^{1/6}+Ln^{-1/6}+\O(n^{-1/2})\\
t(m(n))^2+2z(m(n))^{4/3}-L(m(n))^{2/3}&=tn+2zn^{2/3}+\O(1)
\end{aligned}\end{equation}
as $n\to\infty$.
Applying \eqref{LkTaylor} in \eqref{DBrupperlower} with $L=\mathcal L(0,0;z,t)\pm2\varepsilon$ and using the fact that the inequalities in \eqref{DBrupperlower}
hold for any $\varepsilon>0$ if $n$ is large enough implies that
\begin{equation}
D^\Br(-t,2tm+2zm^{1/3};0,\R_-)=tm^2+2zm^{4/3}-\mathcal L(0,0;z,t)m^{2/3}+\O(1)
\end{equation}
as $m\to\infty$ which proves \eqref{shearlimit}.
\end{proof}

The strategy of the proof of Theorem~\ref{thm:DBetoTW} is to rewrite the boundary curve $r^+_k$ in terms of the last passage value in the Sepp\"al\"ainen--Johansson model, Proposition~\ref{prop:discreteSkorokhod}.
Then we use the known fluctuation results of the Sepp\"al\"ainen--Johansson model to conclude Airy fluctuations of the discrete web distance.

Let $T(m,n)$ denote the last passage time from $(0,0)$ to $(m,n)$ in the Sepp\"al\"ainen--Johansson model with parameter $1/2$
where all vertical edges of $\Z^2$ have $0$ passage time and the horizontal edges have weight $0$ or $1$ with probability $1/2$ each
and all edge weights are independent random variables.
Our notation differs from \cite{Joh01} but the first passage time from $(0,0)$ to $(m,n)$ is equal in distribution to $m-T(m,n)$ where $T(m,n)$ is the last passage time.
By \cite{Joh01} for a single $z$ and by Corollary 6.11 of \cite{DNV23} for all $z$ uniformly on compact intervals it holds that if $x>y$ then
\begin{multline}\label{FPPTW}
T\left(xn+2x\frac{(x-y)^{1/3}}{(xy)^{1/3}}zn^{2/3},yn\right)\\
=\left(x-\frac{(\sqrt x-\sqrt y)^2}2\right)n
+(x+\sqrt{xy})\frac{(x-y)^{1/3}}{(xy)^{1/3}}zn^{2/3}
+\frac{(x-y)^{2/3}}{2(xy)^{1/6}}\A(z)n^{1/3}+o(n^{1/3})
\end{multline}
as $n\to\infty$.
By the same idea including Skorokhod's representation as in the proof of Theorem~\ref{thm:shearlimit} we have that for any $\varepsilon>0$ the left-hand side of \eqref{FPPTW} can be almost surely upper and lower bounded by the right-hand side of \eqref{FPPTW} with $\A(z)$ replaced by $\A(z)\pm\varepsilon$ uniformly in $z$ on a compact interval.

\begin{proof}[Proof of Theorem~\ref{thm:DBetoTW}]
Proposition~\ref{prop:discreteSkorokhod} gives a representation of the boundary curve $r_k^+$ as a last passage time which can be written in terms of $T(m,n)$ as
\begin{equation}\label{LPPFPP}\begin{aligned}
r_k^+(i)&=\max_{i=l_k\le l_{k-1}\le\dots\le l_{-1}=j-k}\sum_{m=0}^k\left(s_m(l_m)-s_m(l_{m-1})\right)+k+v+1\\
&\stackrel{\d}=2T(j-k-i,k)+v+2k+i-j+1
\end{aligned}\end{equation}
where the second identity above maps the $\pm1$ weights of horizontal edges into $0$ or $1$ weights in the last passage percolation.

We specify the target interval to be $\{0\}\times(-\infty,0]$, hence we choose $j=0$ and $v=0$.
Further, we set $i=-n$ and $k=\kappa n+czn^{2/3}+\beta n^{1/3}=\kappa\wt n$ with $\kappa$, $c$ and $\beta$ to be determined and with $\wt n=n+\frac{cz}\kappa n^{2/3}+\frac\beta\kappa n^{1/3}$.
We get from \eqref{FPPTW} and \eqref{LPPFPP} that
\begin{equation}\label{rkappabeta}\begin{aligned}
r_k^+(-n)&=2T\left((1-\kappa)n-czn^{2/3}-\beta n^{1/3},\kappa n+czn^{2/3}+\beta n^{1/3}\right)\\
&\qquad-n+2(\kappa n+czn^{2/3}+\beta n^{1/3})+1\\
&=2T\left((1-\kappa)\wt n-\frac c\kappa\wt z\wt n^{2/3},\kappa\wt n\right)-n+2\kappa\wt n
\end{aligned}\end{equation}
where $\wt z=z+(-\frac{2c}{3\kappa}z^2+\frac\beta c)n^{-1/3}+o(n^{-1/3})$.
Then the right-hand side of \eqref{rkappabeta} is of the form found in \eqref{FPPTW} with $x=1-\kappa$, $y=\kappa$, $n=\wt n$, $z=\wt z$ and $c=-2\kappa x(x-y)^{1/3}/(xy)^{1/3}$.
Then \eqref{FPPTW} yields that
\begin{multline}\label{rkappaexpansion}
r_k^+(-n)=2\sqrt{\kappa(1-\kappa)}n+2(1-2\kappa)^{4/3}(\kappa(1-\kappa))^{1/6}zn^{2/3}\\
+\left(\frac{1-2\kappa}{\sqrt{\kappa(1-\kappa)}}\beta+\frac{(1-2\kappa)^{2/3}}{(\kappa(1-\kappa))^{1/6}}\A(z)+o(1)\right)n^{1/3}
\end{multline}
where the $o(1)$ term above is almost surely uniform as $z$ varies in a compact interval.
We choose $\kappa=(1-\sqrt{1-\eta^2})/2$ so that \eqref{rkappaexpansion} simplifies to
\begin{equation}\label{retabeta}
r_k^+(-n)
=\eta n+2^{2/3}\eta^{1/3}(1-\eta^2)^{2/3}zn^{2/3}+\frac{2\sqrt{1-\eta^2}}\eta\left(\beta+\frac{\eta^{2/3}}{2^{2/3}(1-\eta^2)^{1/6}}\A(z)+o(1)\right)n^{1/3}.
\end{equation}
This means that with
\begin{equation}\label{betavalue}
\beta=-\frac{\eta^{2/3}}{2^{2/3}(1-\eta^2)^{1/6}}\A(z)-\varepsilon
\end{equation}
for any $\varepsilon>0$ fixed the right-hand side of \eqref{retabeta} is at most $\eta n+2^{2/3}\eta^{1/3}(1-\eta^2)^{2/3}zn^{2/3}$ if $n$ is large enough.
This implies that
\begin{multline}\label{DBelower}
D^\Be\left(-n,\eta n+2^{2/3}\eta^{1/3}(1-\eta^2)^{2/3}zn^{2/3};0,\R_-\right)\\
\ge\frac{1-\sqrt{1-\eta^2}}2n-\frac{\eta^{1/3}(1-\eta^2)^{1/6}}{2^{1/3}}zn^{2/3}-\frac{\eta^{2/3}}{2^{2/3}(1-\eta^2)^{1/6}}\A(z)-\varepsilon
\end{multline}
for any $\varepsilon>0$ if $n$ is large enough.
By changing the sign of $\varepsilon$ to positive in \eqref{betavalue} we see that the right-hand side of \eqref{retabeta} is at least $\eta n+2^{2/3}\eta^{1/3}(1-\eta^2)^{2/3}zn^{2/3}$, which yields the corresponding upper bound on the random walk web distance in \eqref{DBelower} with $-\varepsilon$ replaced by $+\varepsilon$.
This completes the proof since $D^\Be(-n,\eta n+2^{2/3}\eta^{1/3}(1-\eta^2)^{2/3}zn^{2/3};0,\R_-)$ and $D^\Be(0,0;n,(-\infty,-\eta n-2^{2/3}\eta^{1/3}(1-\eta^2)^{2/3}zn^{2/3}))$ have the same distribution by the shift invariance.
\end{proof}

\section{Discrete web distance in the horizontal direction}
\label{s:DBehorizontal}

This section is devoted to the proof of Theorem~\ref{thm:DBehorizontal} about the horizontal scaling of the random walk web distance.

\begin{proof}[Proof of Theorem~\ref{thm:DBehorizontal}]
First we prove the lower bound in \eqref{DBehorizontal}.
Since $D^\Be(0,0;n,0)$ and $D^\Be(-n,0;0,0)$ have the same distribution
we can consider the upper and lower boundary curves $r_k^\pm$ of the regions $R_k$ corresponding to the single point $(0,0)$ in the place of the target interval $\{j\}\times I$.
The upper and lower curves $r_k^\pm$ meet at time $T_k$ for each $k$.
We will find a dominating sequence $E_k \ge |T_k|$ so that the increments of $\log E_k$ have a uniform exponential tail decay.

The sequence $E_k$ is chosen so that the region $R_k$ is included in the rectangle $[-E_k,0]\times[-\sqrt{E_k},\sqrt{E_k}]$.
This is done by induction on $k$ as follows.
Assume that $E_k$ along with $R_k$ is given.
Then we sample two backward random walks $e_{k+1}^\pm$ starting at $e_{k+1}^\pm(0)=\pm\sqrt{E_k}$ which are reflected off the constant levels $\pm\sqrt{E_k}$ upwards respectively downwards
on the time interval $[-E_k,0]$ and the two random walks run beyond $-E_k$ until they collide.
We let the paths $e_{k+1}^\pm$ follow the steps of the backward random walk web $\wh Y$ when they are away from the barriers at $\pm\sqrt{E_k}$.

Let  $E_{k+1}$ be the maximum of the absolute value of the collision time and the square of the largest absolute value that these two random walks ever had until collision.
In other words, $E_{k+1}$ is the smallest number for which the full trajectories of the two random walks $e_{k+1}^\pm$
are included in the rectangle $[-E_{k+1},0]\times[-\sqrt{E_{k+1}},\sqrt{E_{k+1}}]$.

Let $\mathcal F_k=\sigma(E_1,\ldots E_k)$. We claim that the tail of the conditional law of $\log E_{k+1}-\log E_k$ given $\mathcal F_k$ decays exponentially uniformly in $k$.
Thus the increments are dominated by an i.i.d.\ sequence with finite mean, which then implies the lower bound of the theorem.

First we study the time $-S$ when the two processes $e^\pm_{k+1}$ meet. Until time $-E_k$ these are random walks which are reflected from the boundaries of the rectangle.
Afterwards, their difference performs a lazy random walk. For lazy walks started at $s>0$, the time $\tau$ they hit zero satisfies $\P(\tau>r)\le 3s/\sqrt{r}$, see Corollary 2.28 in~\cite{LPW06}.
So for our walks
\begin{equation}
\P\big(S>(e^x+1)E_k\,|\,\mathcal F_k, e_{k+1}^\pm(-E_k)\,\big)\le \frac{e_{k+1}^+(-E_k)-e_{k+1}^-(-E_k)}{e^{x/2}\sqrt{E_k}}
\end{equation}
if $x$ is large.
Taking conditional expectations with respect to $\mathcal F_k$ we get that
\begin{equation}
\P\big(S>(e^x+1)E_k\,|\,\mathcal F_k\,\big)\le \frac{c\sqrt{E_k}}{e^{x/2}\sqrt{E_k}}=ce^{-x/2}.
\end{equation}
The walks $e^+_{k+1}$ and $-e^-_{k+1}$ are dominated by random walks reflected off the level $\sqrt{E_k}$ upwards all the way until time $-S$ which have the same distribution as the absolute value of random walks shifted by $\sqrt{E_k}$.
This helps us to bound the tail of $M=\max_{-S\le j\le 0} \max(e^+_{k+1}(j), -e^-_{k+1}(j))$. Let $X_j$ be a simple random walk on $\mathbb Z$ independent of the rest.
Then
\begin{equation}\begin{aligned}
\P\big(M>(e^x+1)\sqrt{E_k},\, S\le(e^x+1)E_k\,|\,\mathcal F_k\,\big)
&\le2\P\left(\max_{ j\le(e^x+1)E_k} |X_j|\ge e^{x}\sqrt{E_k}\,|\,\mathcal F_k\right)\\
&\le8\P\left( X_{(e^x+1)E_k}\ge e^{x}\sqrt{E_k}\,|\,\mathcal F_k\right)\\
&\le ce^{-x/2}
\end{aligned}\end{equation}
for large $x$ where Chebyshev's inequality is used in the last line.
So we get
\begin{multline}
\P\left(E_{k+1}>e^{2x}E_k\,|\,\mathcal F_k\right)\\
\le\P\big(S>(e^x+1)E_k\,|\,\mathcal F_k\big)+\P\left(M>(e^x+1)\sqrt{E_k}, S\le (e^x+1)E_k\,|\,\mathcal F_k\right)\le ce^{-x/2}.
\end{multline}
This implies the lower bound, since
\begin{equation}
\P\left(D^\Be(-n,0;0,0)<a\log n\right)\le\P\left(|T_{a\log n}|\ge n\right)
\le\P\left(\log E_{a\log n}\ge \log n\right)\to 0
\end{equation}
by the law of large numbers, as long as $a>0$ is small enough.

For the upper bound it suffices to show  that there is an $\varepsilon>0$ so that $|T_{k+1}|/|T_k|>1+\varepsilon$ with a uniformly positive probability as $k$ varies.
Since, $r_{k+1}^\pm$ are reflected walks,  there are independent simple random walks $s_{k+1}^\pm(n)$ on $n=0,1,\dots,|T_k|$ so that the $r_{k+1}^+(-n)$ stochastically dominates $s_{k+1}^+(n)$ and $s_{k+1}^-(n)$ stochastically dominates $r_{k+1}^-(-n)$ for all $n=0,1,\dots,|T_k|$.
The rescaled distance $(r_{k+1}^+(T_k)-r_{k+1}^-(T_k))/\sqrt{|T_k|}$ therefore stochastically dominates $(s_{k+1}^+(|T_k|)-s_{k+1}^-(|T_k|))/\sqrt{|T_k|}$. The latter  converges to a normal random variable as $|T_k|\to\infty$. This holds since $|T_k|\ge2k$.

This implies that there is a $\varepsilon>0$ for which it takes at least $\varepsilon|T_k|$ steps beyond $T_k$ for $r_{k+1}^\pm$ to collide with probability $p>0$ for all $k$. So  $\log |T_k|$ dominates $\log(1+\eps)$ times a Bernoulli random walk with success probability $p$.
For any
$0<\nu<p\log(1+\eps)$ by the law of large numbers,
\begin{equation}
\P\left(|T_{(\log n)/\nu}|\le n\right)\to0
\end{equation}
as $n\to\infty$.
Assume now that $|T_{(\log n)/\nu}|\ge n$ and the point $(-n,0)$ is not in the set $R_{(\log n)/\nu}$
but $R_
{(\log n)/\nu}$ intersects the vertical line $\{-n\}\times\Z$ above $0$.
As a consequence of Proposition~\ref{prop:regionconv} the sequence $D^\Be(-n,0;0,\Z_+)$ converges in distribution to $D^{\Br,\LSC}(-1,0;0,\R_+)$
and in particular $D^\Be(-n,0;0,\Z_+)$ is tight.
Hence there is a path with a tight number of steps in the forward random walk web from $(-n,0)$ to $\{0\}\times\Z_+$ which must cross $R_
{(\log n)/\nu}$.
It proves that $D^\Be(-n,0;0,0)$ is at most a constant times $\log n$ with probability tending to $1$ as required for the upper bound in \eqref{DBehorizontal}.
\end{proof}

\section{Proofs of continuity properties}
\label{s:proofs}

\subsection{Properties of continuous region boundaries}
\label{ss:properties}

In this subsection, we prove Propositions \ref{prop:Skorokhod} and \ref{prop:regions}.
First, we give in alternative characterization of the boundary curves $\rho_k^\pm$ in Proposition~\ref{prop:rhochar} below which is used in the proofs of Propositions~\ref{prop:Skorokhod} and \ref{prop:regions}.
Proposition~\ref{prop:Skorokhod} is shown using two different couplings of the same sequence of processes
that approximate the Brownian motion reflected off a function in the Skorokhod sense.
The proof of Proposition~\ref{prop:regions} uses the continuity of the paths $\rho_k^\pm$ which follows from Proposition~\ref{prop:Skorokhod}.

Next we give an alternative description of the boundary curves $\rho_k^\pm$ which we introduce at $\wt\rho_k^\pm$ which are shown to be the same in Proposition~\ref{prop:rhochar}.
Let $\wt\rho_0^\pm(t)$ and $\wt\tau_0$ to be the same as $\rho_0^\pm(t)$ and $\tau_0$, that is, the backward Brownian web paths $\wh B_{(s,v)}(t)$ and $\wh B_{(s,u)}(t)$ and $\wt\tau_0$ their collision time.
Then we proceed by induction on $k$.
We assume that the paths $\wt\rho_k^\pm(t)$ and their collision time $\wt\tau_k$ are given and we introduce $\wt\rho_{k+1}^\pm(t)$ as follows.
We first define a decreasing sequence of trajectories $\wt\rho_{k+1}^{+,n}(t)$ which converge to $\wt\rho_{k+1}^+(t)$.
We start the trajectory $\wt\rho_{k+1}^{+,n}(t)$ at $(s,v+2^{-n+1})$ and follow the backward Brownian web path started from there
until it reaches $\wt\rho_k^+(t)+2^{-n}$ when we reset it to $\wt\rho_k^+(t)+2^{-n+1}$.
That is,  we set $\kappa_0^n=s$ and we let
\begin{equation}\label{defkappaj}
\kappa_{j+1}^n=\sup\{t\in[\tau_k,\kappa_j^n):\wh B_{(\kappa_j^n,\wt\rho_k^+(\kappa_j^n)+2^{-n+1})}(t)\le\wt\rho_k^+(t)+2^{-n}\}
\end{equation}
where the supremum is meant to be $-\infty$ if there is no such $t$.
Then we let
\begin{equation}\label{defrhokn}
\wt\rho_{k+1}^{+,n}(t)=\wh B_{(\kappa_j^n,\wt\rho_k^+(\kappa_j^n)+2^{-n+1})}(t)
\end{equation}
on the interval $t\in(\kappa_{j+1}^n,\kappa_j^n]$ for $j=0,1,2,\dots$ until $\kappa_{j+1}^n=-\infty$.

\begin{lemma}\label{lemma:rhodecreasing}
We have that
\begin{equation}\label{rhodecreasing}
\wt\rho_{k+1}^{+,n+1}(t)\le\wt\rho_{k+1}^{+,n}(t)
\end{equation}
for all $t\le s$.
\end{lemma}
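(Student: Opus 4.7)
The plan is to propagate the ordering $\wt\rho_{k+1}^{+,n+1}(t)\le\wt\rho_{k+1}^{+,n}(t)$ backward in time from $t=s$ by induction over the combined, almost surely disjoint sequence of reset instants of the two approximations. The argument rests on two ingredients. First, \emph{(i)} the construction forces $\wt\rho_{k+1}^{+,n}(t)\ge\wt\rho_k^+(t)+2^{-n}$ wherever $\wt\rho_{k+1}^{+,n}$ is defined: on each segment $(\kappa_{j+1}^n,\kappa_j^n]$ the path is a backward Brownian web trajectory started at height $\wt\rho_k^+(\kappa_j^n)+2^{-n+1}$ and, by the defining property of $\kappa_{j+1}^n$, remains above $\wt\rho_k^+(t)+2^{-n}$ on the whole segment, while at reset instants its value equals $\wt\rho_k^+(\cdot)+2^{-n+1}$. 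Second, \emph{(ii)} the non-crossing property of $\wh B$: two backward Brownian web trajectories that are ordered at some time $t$ remain ordered at every $t'\le t$ where both are defined.

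Enumerate the union of the reset times of $\wt\rho_{k+1}^{+,n}$ and $\wt\rho_{k+1}^{+,n+1}$ as $s=t_0>t_1>t_2>\cdots$. The base case is immediate: $\wt\rho_{k+1}^{+,n+1}(s)=v+2^{-n}\le v+2^{-n+1}=\wt\rho_{k+1}^{+,n}(s)$, using the fact that $\wt\rho_k^+(s)=v$, itself verified by a side induction on $k$. For the inductive step, assume $\wt\rho_{k+1}^{+,n+1}(t_i)\le\wt\rho_{k+1}^{+,n}(t_i)$. On the closed interval $[t_{i+1},t_i]$ neither approximation resets, so each one coincides with a single trajectory in $\wh B$; fact \emph{(ii)} then propagates the ordering at $t_i$ to the entire interval $[t_{i+1},t_i]$.

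It remains to cross the reset at $t=t_{i+1}$. If the reset is that of $\wt\rho_{k+1}^{+,n}$, that path jumps upward while $\wt\rho_{k+1}^{+,n+1}$ is continuous through $t_{i+1}$, so the inequality only improves. If instead the reset is that of $\wt\rho_{k+1}^{+,n+1}$, the latter jumps from $\wt\rho_k^+(t_{i+1})+2^{-n-1}$ up to $\wt\rho_k^+(t_{i+1})+2^{-n}$, while fact \emph{(i)} applied to the $n$-approximation gives $\wt\rho_{k+1}^{+,n}(t_{i+1})\ge\wt\rho_k^+(t_{i+1})+2^{-n}$, so the ordering still holds. The main obstacle is precisely this last case: one must rule out that the upward jump of the finer approximation overshoots the coarser path, and what makes the argument work is the numerical coincidence that the post-reset height $\wt\rho_k^++2^{-n}$ of $\wt\rho_{k+1}^{+,n+1}$ coincides exactly with the uniform lower bound that $\wt\rho_{k+1}^{+,n}$ is forced to respect by its own reset rule.
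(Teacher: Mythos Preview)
Your proof is correct and follows essentially the same approach as the paper's: both rest on the uniform lower bound $\wt\rho_{k+1}^{+,n}(t)\ge\wt\rho_k^+(t)+2^{-n}$ together with the non-crossing property of backward Brownian web paths between reset times, and both handle the two kinds of resets exactly as you do. The only cosmetic difference is that the paper argues slightly more informally (tracking the resets $\kappa_j^{n+1}$ and $\kappa_j^n$ separately rather than merging them into a single enumerated sequence), and it does not bother to assert that the two families of reset times are disjoint---which, as your own case analysis shows, is not actually needed.
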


\begin{proof}[Proof of Lemma~\ref{lemma:rhodecreasing}]
First observe that
\begin{equation}\label{rholowerbound}
\wt\rho_k^+(t)+2^{-n}\le\wt\rho_{k+1}^{+,n}(t)
\end{equation}
holds for any $t\le s$ by the definition of the stopping times $\kappa_j^n$ and that of $\wt\rho_{k+1}^{+,n}$ in \eqref{defkappaj}--\eqref{defrhokn}.
Then \eqref{rhodecreasing} can be shown as follows.
It clearly holds for $t=s$, then the inequality remains valid until $\kappa_1^{n+1}$ by the fact that both sides follow the evolution of the backward Brownian web.
At time $t=\kappa_j^{n+1}$ by definition $\wt\rho_{k+1}^{+,n+1}(\kappa_j^{n+1})=\wt\rho_k^+(\kappa_j^{n+1})+2^{-n}$
which is still a lower bound for $\wt\rho_{k+1}^{+,n}(\kappa_j^{n+1})$ by \eqref{rholowerbound} for any $j=1,2,\dots$.
This completes the proof because at $\kappa_j^n$ the left-hand side of \eqref{rhodecreasing} changes continuously
while the right-hand side has a jump of size $2^{-n}$
and at any other time the evolution of the backward Brownian web does not allow the two paths to cross each other.
\end{proof}

Hence the sequence of paths $\wt\rho_{k+1}^{+,n}(t)$ for $t\le s$ is non-increasing in $n$.
This allows us to define
\begin{equation}\label{defrhok+1}
\wt\rho_{k+1}^+(t)=\lim_{n\to\infty}\wt\rho_{k+1}^{+,n}(t).
\end{equation}
Very similarly given $\wt\rho_k^-(t)$ for $t\le s$ one defines $\wt\rho_{k+1}^-(t)$ as the limit of a non-decreasing sequence of paths.
By the obvious inequality $\wt\rho_{k+1}^-(t)\le\wt\rho_{k+1}^+(t)$ both curves are well-defined
and the limits in \eqref{defrhok+1} and in its analogue for $\wt\rho_{k+1}^-$ are finite.
Then we let
\begin{equation}
\wt\tau_{k+1}=\sup\{t\le s:\wt\rho_{k+1}^+(t)=\wt\rho_{k+1}^-(t)\}.
\end{equation}

\begin{proposition}\label{prop:rhochar}
Almost surely $\rho_k^\pm(t)=\wt\rho_k^\pm(t)$ holds for all $k=0,1,2,\dots$ and for all $t\le s$.
\end{proposition}

\begin{proof}[Proof of Proposition~\ref{prop:rhochar}]
All points of $\wt\rho_k^+$ for all $k=0,1,2,\dots$ have an incoming backward Brownian web path.
The statement is true for $k=0$ by definition, it holds by construction for $\wt\rho_{k+1}^+$ if it is away from $\wt\rho_k^+$
and at the points where they meet the statement follows by induction.
Hence all point of $\wt\rho_k^+$ are of type $(1,1)$, $(2,1)$ or $(1,2)$ in the backward Brownian web.
Note that all $(1,1)$ and $(2,1)$ points on $\wt\rho_k^+$ with a single outgoing backward Brownian web path $b$,
we have $b\le\wt\rho_k^+$ at all times that they are defined.
These statements imply that the supremum in \eqref{defrho} over all backward Brownian web paths starting
along the trajectory of $\wt\rho_k^+(r)$ for $r\in[t,s]$ equals to the supremum over the $(1,2)$ points along $\wt\rho_k^+$ only.

By definition, we have $\rho_0^+(t)=\wt\rho_0^+(t)$ for all $t\le s$.
Assume now that $\rho_k^+(t)=\wt\rho_k^+(t)$ for all $t\le s$ holds for $k$ and we prove it for $k+1$.
We first show that
\begin{equation}\label{rhotilderho}
\rho_{k+1}^+(t)\le\wt\rho_{k+1}^+(t)
\end{equation}
holds for all $t\in[\tau_k,s]$.
This follows from the inequalities $\rho_{k+1}^+(t)\le\wt\rho_{k+1}^{+,n}(t)$ which hold for all $n$ and for all $t\in[\tau_k,s]$.
They are indeed true because $\rho_{k+1}^+(t)>\wt\rho_{k+1}^{+,n}(t)$ for some $t\le s$ would mean the existence of an $r\in[t,s]$ such that $\wh B_{(r,\rho_k^+(r))+}(t)>\wt\rho_{k+1}^{+,n}(t)$,
that is, the backward Brownian web path $\wh B_{(r,\rho_k^+(r))+}$ would cross $\wt\rho_{k+1}^{+,n}$ where the latter consists of backward Brownian web path parts with possible upward jumps between the parts.
This is impossible, hence \eqref{rhotilderho}.

To prove the reverse inequality we assume that there is a $t\in[\tau_k,s]$ such that $\wt\rho_{k+1}^+(t)>\wt\rho_k^+(t)$
because otherwise the equality in \eqref{rhotilderho} holds automatically.
In case of $\wt\rho_{k+1}^+(t)>\wt\rho_k^+(t)$, the path $\wt\rho_{k+1}^+$ coincides with a backward Brownian web trajectory around $t$,
hence there are at least two different outgoing paths from the point $(t,\wt\rho_{k+1}^+(t))$ in the forward Brownian web.
We consider the lowest forward path $B_{(t,\wt\rho_{k+1}^+(t))-}$ starting at $(t,\wt\rho_{k+1}^+(t))$ which is below $\wt\rho_{k+1}^+$ locally.
By definition, the trajectory $\wt\rho_{k+1}^+$ must always follow a backward Brownian web path if it is away from $\wt\rho_k^+$.
For this reason $B_{(t,\wt\rho_{k+1}^+(t))-}$ cannot cross $\wt\rho_{k+1}^+$ as long as $\wt\rho_{k+1}^+$ is away from $\wt\rho_k^+$.
Take $\tau=\inf\{r\in[t,s]:\wt\rho_{k+1}^+(r)=\wt\rho_k^+(r)\}$ which has to exist because $\wt\rho_{k+1}^+(s)=\wt\rho_k^+(s)$.

Since $\wt\rho_{k+1}^+(r)>\wt\rho_k^+(r)$ for $r\in[t,\tau)$ by the definition of $\tau$,
the path $\wt\rho_{k+1}^+$ is equal to a backward Brownian web path on $[t,\tau)$.
Hence $\wt\rho_{k+1}^+$ itself on $[t,\tau)$ is one of the outgoing backward paths starting from $(\tau,\wt\rho_k^+(\tau))$
which implies
\begin{equation}
\wt\rho_{k+1}^+(r)\le\wh B_{(\tau,\rho_k^+(\tau))+}(r)\le\rho_{k+1}^+(r)
\end{equation}
for all $r\in[t,\tau)$ and it completes the proof.
\end{proof}

The proof of Proposition~\ref{prop:Skorokhod} is based on Proposition~\ref{prop:WfstarSkorokhod} below.
To state it, we introduce the following notation.
Let $f,g:\R_+\to\R$ be continuous functions with $f(0)=0$. The {\bf reflection of $g$ off $f$ in the Skorokhod sense}  is \begin{equation}\label{defSkorokhod}
g_{f\uparrow}(t)=g(t)-\inf_{s\in[0,t]}\left(g(s)-f(s)\right).
\end{equation}
We will apply this concept with $g=W$, a standard Brownian motion.

Next, we keep $f$ general, and we present a sequence of approximations to  $W_{f\uparrow}$ that is compatible with the Brownian web.
We use these approximations to understand the law of the paths $\rho_k^\pm=\wt\rho_k^\pm$.
In these approximations, we mimic the definition of $\wt\rho_{k+1}^+$ given $\wt\rho_k^+$.
That is, in the $n$th step we run $W(t)$ until it approaches $f(t)$ by $2^{-n}$ and then we reset it to $f(t)+2^{-n+1}$.

First, we fix $f$ and a positive integer $n$, and describe the marginal law of the approximation $W_{f,n}$.
We define $W_{f,n}(t)$ together with the sequence of stopping times $\iota_j^n$ for $j=0,1,2,\dots$ as follows.
We let $\iota_0^n=0$ and define $W_{f,n}$ to evolve as a Brownian motion started at $W_{f,n}(0)=2^{-n+1}$ until time $\iota_1^n$ defined by
\begin{equation}\label{defiota}
\iota_{j+1}^n=\inf\left\{t>\iota_j^n:W_{f,n}(t)\le f(t)+2^{-n}\right\}.
\end{equation}
For any $j=1,2,\dots$, we set $W_{f,n}(\iota_j^n)=f(\iota_j^n)+2^{-n+1}$ and we let $W_{f,n}$ evolve further
as a Brownian motion independent of the past of $W_{f,n}$.
In this way $W_{f,n}(t)$ is defined on the intervals $t\in[\iota_j^n,\iota_{j+1}^n)$ to run as Brownian motion
and with jumps at $\iota_j^n$ for $j=1,2,\dots$ from $f(\iota_j^n)+2^{-n}$ to $f(\iota_j^n)+2^{-n+1}$.

Next, we describe a specific coupling of $W$ and the $W_{f,n}$, $n=1,2,\ldots $ which is compatible with the Brownian web.
We refer to this as the {\bf Brownian web coupling}.
Under this coupling, Brownian motion paths follow a coalescing rule.
Let us sample $W$ first, which gives us $W_{f\uparrow}$.
Then we add the paths $W_{f,n}$ one by one as follows.
We start with the path $W_{f,1}$ which is distributed as a Brownian motion started at $1$ and if it reaches $f+1/2$ then it jumps to $f+1$.
We set $W_{f,1}$ to be a Brownian motion started at $1$ which is independent of $W_{f\uparrow}$ (with $0=W_{f\uparrow}(0)\le W_{f,1}(0)=1$)
until $W_{f,1}$ hits $W_{f\uparrow}$.
Then $W_{f,1}$ follows $W_{f\uparrow}$ until the time when $W_{f,1}$ jumps up from $f+1/2$ to $f+1$.
After its jump, $W_{f,1}$ evolves independently of $W_{f\uparrow}$ again until they collide.
In this way, $W_{f\uparrow}\le W_{f,1}$ holds for all times.

Suppose that $W_{f\uparrow}$ and $W_{f,1},\dots,W_{f,n}$ are already sampled and that the inequality
\begin{equation}
W_{f\uparrow}(t)\le W_{f,n}(t)\le\dots\le W_{f,1}(t)
\end{equation}
holds for all $t\ge0$.
Then we sample the trajectory $W_{f,n+1}$ which starts at $W_{f,n+1}(0)=2^{-n+2}$ above $W_{f\uparrow}(0)=0$ and below
$W_{f,1}(0)=1,\dots,W_{f,n}(0)=2^{-n+1}$.
It will be clear from the construction that this relation is kept for all time under the Brownian web coupling.
The path $W_{f,n+1}$ runs independently of those sampled so far ($W_{f\uparrow}$ and $W_{f,1},\dots,W_{f,n}$) as long as it is away from them.
Note that there are two possible collisions of $W_{f,n+1}$ with previously sampled paths.
First, if $W_{f,n+1}$ hits $W_{f,n}$ from below, then $W_{f,n+1}$ coalesces with $W_{f,n}$ and they run together
until the next jump of $W_{f,n}$ at $\iota_j^n$ for some $j$ when $W_{f,n}$ jumps up from $f(\iota_j^n)+2^{-n}$ to $f(\iota_j^n)+2^{-n+1}$
and $W_{f,n+1}$ runs further as an independent Brownian motion.
Second, if $W_{f,n+1}$ hits $W_{f\uparrow}$ from above, then $W_{f,n+1}$ coalesces with $W_{f\uparrow}$ until the next jump of $W_{f,n+1}$
at $\iota_j^{n+1}$ for some $j$ when $W_{f,n+1}$ jumps up from $f(\iota_j^{n+1})+2^{-n+1}$ to $f(\iota_j^{n+1})+2^{-n+2}$.
After $\iota_j^{n+1}$, $W_{f,n+1}$ evolves further as an independent Brownian motion again until the next collision.
In this way, $W_{f,n+1}$ can get neither below the path $W_{f\uparrow}$ nor above $W_{f,n}$.

By construction, we have
\begin{equation}
W_{f\uparrow}(t)\le W_{f,n+1}(t)\le W_{f,n}(t)
\end{equation}
for all $t\ge0$.
By monotonicity, the  limit of $W_{f,n}$ exists and it satisfies
\begin{equation}\label{Wfstarlower}
W_{f*}(t)=\lim_{n\to\infty}W_{f,n}(t)\ge W_{f\uparrow}(t)
\end{equation}
for all $t\ge0$.
The inequality in \eqref{Wfstarlower} is an equality according to the next result.

\begin{proposition}\label{prop:WfstarSkorokhod}
Let $f:\R_+\to\R$ be a continuous function with $f(0)=0$.
Then
\begin{equation}
W_{f*}(t)=W_{f\uparrow}(t)
\end{equation}
holds almost surely for all $t$.
\end{proposition}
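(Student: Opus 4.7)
The task is to establish $W_{f*}(t)\le W_{f\uparrow}(t)$ almost surely, since the reverse inequality is recorded in \eqref{Wfstarlower}. My plan is to combine the monotone pathwise convergence $W_{f,n}(t)\downarrow W_{f*}(t)$ available under the Brownian web coupling with a distributional Skorokhod-penalization statement identifying the limit of the marginal laws of $W_{f,n}$ as that of $W_{f\uparrow}$, and then to reconcile them via integrability.

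The argument would proceed in three steps. First, I would verify that under the Brownian web coupling the process $W_{f,n}$ retains its autonomous marginal description: a Brownian motion restarted from $f+2^{-n+1}$ at each first passage to $f+2^{-n}$. The point is that whenever $W_{f,n}$ coalesces with $W_{f\uparrow}$ and follows it, the process $W_{f\uparrow}$ is strictly above $f$ on that stretch (the coalesced $W_{f,n}$ has not yet reached the reset level $f+2^{-n}>f$), hence $W_{f\uparrow}$ evolves there as a plain Brownian motion; the strong Markov property then reassembles the segment into a single Brownian motion marginally, as in the autonomous construction. Second, I would prove the weak convergence $W_{f,n}\Rightarrow W_{f\uparrow}$ in $C[0,T]$ for every $T>0$ by a Skorokhod-type penalization argument: each reset contributes an overshoot of exactly $2^{-n}$, the number of resets up to time $T$ scales with the local time of $W_{f\uparrow}$ at $f$, and the cumulative upward push converges in law to the Skorokhod reflector $-\inf_{s\le\cdot}(W(s)-f(s))$. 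Tightness would follow from Brownian moment estimates on the individual segments, and any subsequential limit is identified with $W_{f\uparrow}$ by uniqueness of the solution to the Skorokhod reflection problem.

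Third, I would reconcile the two modes of convergence via integrability. The sandwich $W_{f\uparrow}(t)\le W_{f,n}(t)\le W_{f,1}(t)$ under the coupling, together with $\E|W_{f,1}(t)|<\infty$, yields uniform integrability of $W_{f,n}(t)$. Weak convergence from Step~2 then upgrades to $\E W_{f,n}(t)\to \E W_{f\uparrow}(t)$, while monotone convergence gives $\E W_{f,n}(t)\downarrow \E W_{f*}(t)$. Hence $\E[W_{f*}(t)-W_{f\uparrow}(t)]=0$; since the integrand is nonnegative almost surely by \eqref{Wfstarlower}, it vanishes almost surely. The pointwise identity extends to all $t\ge 0$ simultaneously by applying it at rationals and using monotonicity of $n\mapsto W_{f,n}(t)$ together with continuity of $W_{f\uparrow}$ in $t$.

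The main obstacle I anticipate is the identification in Step~2. The complication specific to this coupling is that $W_{f,n}$ is not driven by a single Brownian path: after each reset the continuation is an independent Brownian segment (or, under the coupling, a coalesced stretch of $W_{f\uparrow}$), so one cannot directly invoke continuity of the Skorokhod reflection map applied to a fixed driving path. The remedy is an indirect tightness-and-uniqueness argument: produce uniform modulus-of-continuity estimates for tightness in $C[0,T]$, then show that any subsequential limit $g$ is continuous with $g\ge f$, $g(0)=W(0)$, and $g-W$ non-decreasing and supported on $\{g=f\}$. Uniqueness of the solution to Skorokhod's reflection problem then forces $g=W_{f\uparrow}$, closing the argument.
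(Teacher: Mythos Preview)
Your overall architecture matches the paper's: establish that $W_{f,n}\Rightarrow W_{f\uparrow}$ in law, and then combine this with the pathwise ordering $W_{f*}\ge W_{f\uparrow}$ to force equality. The paper's Step~3 is essentially yours, except it uses the cleaner fact that $X\stackrel{\d}{=}Y$ and $X\ge Y$ a.s.\ imply $X=Y$ a.s.\ (apply $\arctan$ and take expectations), so no integrability check is needed.

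Where you diverge substantially is Step~2, and this is where the paper's idea is much more efficient. The paper introduces a \emph{second} coupling $\wt W_{f,n}$ in which every $W_{f,n}$ is driven by the \emph{same} Brownian motion $W$ via \eqref{defWfn}. Under that coupling one gets the deterministic sandwich
\[
W_{f\uparrow}(t)\le \wt W_{f,n}(t)\le W_{f\uparrow}(t)+2^{-n+1}
\]
for all $t$, using only two elementary monotonicity properties of the Skorokhod map. This gives uniform a.s.\ convergence $\wt W_{f,n}\to W_{f\uparrow}$ immediately, hence convergence in law of $W_{f,n}$ (same marginals), and tightness and continuity of $W_{f*}$ for free. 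No local-time heuristics, no modulus-of-continuity estimates, no identification of subsequential limits.

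Your proposed route through tightness and Skorokhod uniqueness has a genuine gap in the identification step. You write that a subsequential limit $g$ should satisfy ``$g-W$ non-decreasing and supported on $\{g=f\}$'', but under the autonomous marginal description each $W_{f,n}$ is driven by its \emph{own} Brownian motion $W_n$; there is no single $W$ across $n$ to pass to the limit. To make the Skorokhod-uniqueness argument work you would have to prove \emph{joint} tightness and convergence of the pairs $(W_{f,n},W_n)$, identify the limiting driver as a Brownian motion, and then show that the increasing part of the limit is supported on $\{g=f\}$ (this last requires controlling where the push $L_n$ grows, i.e.\ that it concentrates near $f$ as $n\to\infty$). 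None of this is set up in your sketch. The same-driver coupling $\wt W_{f,n}$ is precisely the device that makes the driving $W$ common to all $n$ and dissolves all of these difficulties in one line; once you see it, your Step~2 becomes a two-inequality calculation rather than a penalization programme.
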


\begin{proof}[Proof of Proposition~\ref{prop:WfstarSkorokhod}]
We define another coupling of the sequence of processes $W_{f,n}(t)$ where each of them are driven by the same Brownian motion $W(t)$.
We denote these processes by $\wt W_{f,n}(t)$ under this coupling which we define by
\begin{equation}\label{defWfn}
\wt W_{f,n}(t)=f(\iota_j^n)+2^{-n+1}+W(t)-W(\iota_j^n)
\end{equation}
for $t\in[\iota_j^n,\iota_{j+1}^n)$ where $\iota_j^n$ is given by \eqref{defiota}.
Note that for every $n$, $\wt W_{f,n}$ defined in \eqref{defWfn} has the same law as  $W_{f,n}$.

For a given integer $n$ and given $t\ge0$,
let $j$ be such that $\iota_j^n\le t<\iota_{j+1}^n$, that is $\iota_j^n$ is the last time in $[0,t]$ where $W_{f,n}$ jumps or time $0$
if there is no jump in $[0,t]$.
Next we use the property of Skorokhod reflection that for any continuous $f,g:\R_+\to\R$ and $0\le s\le r$ the inequality
$g(r)-g(s)\le g_{f\uparrow}(r)-g_{f\uparrow}(s)$ holds.
For this reason, the increment of $W$ on the right-hand side of \eqref{defWfn} can be upper bounded almost surely as
\begin{equation}\label{Wfnbound}
\wt W_{f,n}(t)\le f(\iota_j^n)+2^{-n+1}+W_{f\uparrow}(t)-W_{f\uparrow}(\iota_j^n).
\end{equation}
Another property of Skorokhod reflection is that $W_{f\uparrow}(\iota_j^n)\ge f(\iota_j^n)$,
hence \eqref{Wfnbound} can be further upper bounded as
\begin{equation}
\wt W_{f,n}(t)\le2^{-n+1}+W_{f\uparrow}(t)
\end{equation}
which holds almost surely for all $t$ uniformly.
Also, by construction $W_{f\uparrow}\le \wt W_{f,n}$. This implies that $\sup_t|\wt W_{f,n}(t)-W_{f\uparrow}(t)|\to 0$,
and hence the sequence $\wt W_{f,n}$ is tight in the sup-norm topology.
So $W_{f,n}$ is also tight since it has the same distribution.
In particular, its pointwise limit $W_{f*}$ is continuous.
So for every $t\ge 0$, almost surely
\begin{equation}
W_{f\uparrow}(t) \stackrel{\d}{=} W_{f*}(t)\ge W_{f\uparrow}(t),
\end{equation}
and so $W_{f\uparrow}(t)=W_{f*}(t)$.
The continuity of $W_{f*}$ and $W_{f\uparrow}$ now implies the claim.
\end{proof}

\begin{proof}[Proof of Proposition~\ref{prop:Skorokhod}]
By definition \eqref{defrho} the curve $\rho_{k+1}^+$ is independent of the randomness above $\rho_{k+1}^+$
which implies that the sequence $\rho_k^\pm(t)$ on $[\tau_k,s]$ is Markovian in $k$.
Hence the distribution of $\rho_{k+1}^+$ only depends on $\rho_k^+$.
It remains to show that $\rho_{k+1}^+$ is a Skorokhod reflected Brownian motion off $\rho_k^+$ on $[\tau_k,s]$, the evolution on $[\tau_{k+1},\tau_k]$ is clear.

By Proposition~\ref{prop:rhochar} it holds that $\rho_{k+1}^+=\wt\rho_{k+1}^+$.
We show next that $\wt\rho_{k+1}^+$ is a Brownian motion reflected off $\wt\rho_k^+$ in the Skorokhod sense.
Instead of specifying the backward Brownian motion which is reflected off $\wt\rho_k^+(t)$ to construct $\wt\rho_{k+1}^+(t)$
we use Proposition~\ref{prop:WfstarSkorokhod} backward in time with the continuous curve $f(t)=\wt\rho_k^+(s-t)$ for $t\ge0$.
Then $W_{f,n}(t)=\wt\rho_{k+1}^{+,n}(s-t)$ holds for all integer $n$ and $t\ge0$ under the Brownian web coupling of $W_{f,n}$.
Proposition~\ref{prop:WfstarSkorokhod} yields that the almost sure decreasing limit $\wt\rho_{k+1}^+$
is a Brownian path reflected off $\wt\rho_k^+$ in the Skorokhod sense which completes the proof.
\end{proof}

\begin{proof}[Proof of Proposition~\ref{prop:regions}]
By Proposition~\ref{prop:rhochar}, we have $\rho_k^\pm=\wt\rho_k^\pm$ for all $k$, hence it is enough to show that the curves $\wt\rho_k^\pm$ serve as the boundary of the region $Q_k$.
We proceed by induction on $k$.
The statement clearly holds for $k=0$ because the points $\{(t,x):D^{\Br,\LSC}(t,x;s,I)=0\}$ are exactly those for which $B_{(t,x)}(s)\in I$
and these points are the ones between $\wt\rho_0^-(t)=\wh B_{(s,u)}(t)$ and $\wt\rho_0^+(t)=\wh B_{(s,v)}(t)$ for $t\in[\tau_0,s]$.

Next we assume that
\begin{equation}\label{Qkwithrho}
Q_k=\left\{(t,x):t\in[\tau_k,s],x\in[\wt\rho_k^-(t),\wt\rho_k^+(t)]\right\}
\end{equation}
holds for some $k$ and we prove it for $k+1$ below.
We choose an arbitrary $(t,x)\not\in Q_k$ and we prove that if the distance $D^{\Br,\LSC}(t,x;s,I)$ is $k+1$,
then the point $(t,x)$ belongs to the right-hand side of \eqref{Qkwithrho}
and if the distance is larger than $k+1$, then $(t,x)$ is not in the right-hand side of \eqref{Qkwithrho}.

If the forward Brownian web path $B_{(t,x)}$ hits the boundary of the set $Q_k$, then by definition $D^{\Br,\LSC}(t,x;s,I)=k+1$ and $(t,x)\in Q_{k+1}$.
Without loss of generality we can assume that $B_{(t,x)}$ hits the curve $\wt\rho_k^+$ at some $(t^*,\wt\rho_k^+(t^*))$.
We show that the point $(t,x)$ is below the curve $\wt\rho_{k+1}^+$, that is, $x\le\wt\rho_{k+1}^+(t)$
which by definition \eqref{defrhok+1} means that $x\le\wt\rho_{k+1}^{+,n}(t)$ for all $n$.
This is however true because $B_{(t,x)}(r)$ is a continuous forward trajectory for $r\in[t,t^*]$
starting at $B_{(t,x)}(t)=x$ and ending at $B_{(t,x)}(t^*)=\wt\rho_k^+(t^*)$.
The backward trajectory $\wt\rho_{k+1}^{+,n}(r)$ for $r\in[t,t^*]$ starts at $\wt\rho_{k+1}^{+,n}(t^*)\ge\wt\rho_k^+(t^*)+2^{-n}$, that is, above the endpoint of $B_{(t,x)}$.
Furthermore $\wt\rho_{k+1}^{+,n}(r)$ is defined to follow a backward Brownian web trajectory on the intervals $(\kappa_{j+1}^n,\kappa_j^n]$
and to jump up by $2^{-n}$ at every $\kappa_j^n$.
For this reason $\wt\rho_{k+1}^{+,n}(r)$ can never cross $B_{(t,x)}(r)$ and it remains above $B_{(t,x)}(r)$ for $r\in[t,t^*]$.
In particular $x=B_{(t,x)}(t)\le\wt\rho_{k+1}^{+,n}(t)$ for all $n$ which proves that $x\le\wt\rho_{k+1}^+(t)$.

If the path $B_{(t,x)}$ avoids the set $Q_k$, then $D^{\Br,\LSC}(t,x;s,I)>k+1$ and $(t,x)\not\in Q_{k+1}$.
In this case we may assume that $B_{(t,x)}$ passes above the continuous upper boundary $\wt\rho_k^+$ (continuity follows from Proposition~\ref{prop:Skorokhod}).
Then $B_{(t,x)}$ has a positive distance from $\wt\rho_k^+$, that is, there is an $\varepsilon>0$ such that $B_{(t,x)}(r)\ge\wt\rho_k^+(r)+\varepsilon$ for all $r\in[t,s]$.
Let $n\ge1-\log_2\varepsilon$ which means that $2^{-n+1}\le\varepsilon$.
We show that the point $(t,x)$ is above $\wt\rho_{k+1}^{+,n}$, that is, $x\ge\wt\rho_{k+1}^{+,n}(t)\ge\wt\rho_{k+1}^+(t)$ where the second inequality holds due to the fact that $\wt\rho_{k+1}^+$ is the decreasing limit of $\wt\rho_{k+1}^{+,n}$.
The inequality $x\ge\wt\rho_{k+1}^{+,n}(t)$ is true because $B_{(t,x)}(r)$ is a continuous forward trajectory for $r\in[t,s]$ starting at $B_{(t,x)}(t)=x$ and ending at $B_{(t,x)}(s)\ge v+\varepsilon$.
The backward trajectory $\wt\rho_{k+1}^{+,n}(r)$ for $r\in[t,s]$ starts at $v+2^{-n+1}$ which is below the endpoint of $B_{(t,x)}$.
As long as $\wt\rho_{k+1}^{+,n}$ follows a backward Brownian web path it cannot cross $B_{(t,x)}$.
In the points $\kappa_j^n$ the path $\wt\rho_{k+1}^{+,n}$ jumps from $\wt\rho_k^++2^{-n}$ to $\wt\rho_k^++2^{-n+1}$ which is still below $\wt\rho_k^++\varepsilon\le B_{(t,x)}$.
Therefore $\wt\rho_{k+1}^{+,n}$ remains below $B_{(t,x)}$, hence $x\ge\wt\rho_{k+1}^{+,n}(t)\ge\wt\rho_{k+1}^+(t)$.
\end{proof}

\subsection{Properties of discrete region boundaries}
\label{ss:discreteproperties}

This subsection contains the proofs of Propositions~\ref{prop:discreteSkorokhod} and \ref{prop:discreteregions}.
about the boundaries of regions with different discrete web distances.
First we prove the following discrete analogue of Proposition~\ref{prop:rhochar} which describes the evolution of the discrete region boundaries.
Heuristically $r_{k+1}^+$ follows the evolution of a backward discrete Brownian web trajectory of $\wh Y$ as long as it is away from $r_k^+$, cf.~\eqref{defYhat},
and $r_{k+1}^+$ is forced to jump up if it is equal to $r_k^+$ until time $T_k$.
Beyond $T_k$, $r_{k+1}^+$ follows a backward discrete Brownian web trajectory.
The evolution of $r_{k+1}^-$ given $r_k^-$ can be given similarly.

\begin{proposition}\label{prop:rchar}
For any $k=0,1,2,\dots$, conditionally given the trajectory of $r_k^+$ and the time $T_k$ when $r_k^\pm$ meet,
the evolution of $r_{k+1}^+$ is given by
\begin{equation}\label{rk+1evolution}
r_{k+1}^+(i-1)=\left\{\begin{array}{ll}r_{k+1}^+(i)-\xi_{(i-1,r_{k+1}^+(i))} & \mbox{if}\quad r_{k+1}^+(i)>r_k^+(i)\\
r_{k+1}^+(i)+1 & \mbox{if}\quad r_{k+1}^+(i)=r_k^+(i)\end{array}\right.
\end{equation}
for all $i=j,j-1,\dots,T_k+1$.
For times $i=T_k,T_k-1,\dots$ we have that
\begin{equation}\label{rk+1evolution2}
r_{k+1}^+(i-1)=r_{k+1}^+(i)-\xi_{(i-1,r_{k+1}^+(i))}.
\end{equation}
\end{proposition}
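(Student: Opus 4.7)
I will argue by backward induction on $i$, tracking how the maximum in \eqref{defr} defining $r_{k+1}^+(i-1)$ differs from the one defining $r_{k+1}^+(i)$: stepping from time $i$ to $i-1$ adds at most one new walk (the one with starting index $l=i-1$) to the family of competitors. Two structural facts will be used throughout. First, the walks $\wh Y_{(l,r_k^+(l+1)+1)}$ all live in the coalescing backward web $\wh Y$, so any two of them at the same height at the same time agree at all earlier times. Second, both $r_k^+(i)+i$ and $r_{k+1}^+(i)+i$ are odd for every $i$ (an easy induction using that each walk starts in $\Z^2_\o$), so the strict inequality $r_{k+1}^+(i)>r_k^+(i)$ automatically upgrades to $r_{k+1}^+(i)\ge r_k^+(i)+2$.

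For $T_k+1\le i\le j$ I distinguish two cases, matching the two lines of \eqref{rk+1evolution}. If $r_{k+1}^+(i)>r_k^+(i)$, the walk attaining the maximum at time $i$ sits at height at least $r_k^+(i)+2$, so after one backward step it is at height at least $r_k^+(i)+1$, which equals the height of the freshly added walk at $(i-1,r_k^+(i)+1)$; by coalescence all other competing walks remain at or below the stepped maximiser, and the first line follows. If instead $r_{k+1}^+(i)=r_k^+(i)$, the maximising walk must already have coalesced with $r_k^+$ itself (both being $\wh Y$-walks of matching parity meeting at the same height), so it steps with $r_k^+$ to $r_k^+(i-1)\in\{r_k^+(i)\pm 1\}\le r_k^+(i)+1$; the freshly added walk, sitting exactly at $r_k^+(i)+1$, then matches or beats the stepped maximiser and becomes the new maximum, producing the jump $r_{k+1}^+(i-1)=r_{k+1}^+(i)+1$.

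For $i\le T_k$ I use that $r_k^+$ is prescribed only on $[T_k,j]$ (with the boundary value $r_k^+(j+1)=v$), so the quantity $r_k^+(l+1)$ entering \eqref{defr} is only meaningful for $l\ge T_k-1$, and the maximum is implicitly restricted to that range of $l$. Consequently, decreasing $i$ below $T_k$ introduces no new walks at all, so by the coalescence property the unique surviving maximiser evolves by a single backward random walk step, yielding \eqref{rk+1evolution2}. The borderline case $i=T_k$ is handled exactly as the first case above: since $r_{k+1}^+(T_k)\ge r_k^+(T_k)+2$, the final new entrant at $l=T_k-1$, which sits at height $r_k^+(T_k)+1$, cannot overtake the stepped maximum.

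The main delicate point is the coalescence claim used in the second case: one must verify that as soon as $r_{k+1}^+$ touches $r_k^+$, the walk realising the maximum really merges with $r_k^+$ itself. This follows from the first case, which forbids $r_{k+1}^+$ from crossing $r_k^+$ strictly downward in a single backward step, so the only way for $r_{k+1}^+$ to reach $r_k^+$ is to land exactly on it; coalescence of $\wh Y$-walks of the same parity at the same space--time point then forces the merger, and this is what permits the clean dichotomy in \eqref{rk+1evolution}.
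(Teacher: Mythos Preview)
Your approach is essentially the paper's: backward induction on $i$, peeling off the $l=i$ walk from the maximum in \eqref{defr}, and the two-case analysis. The paper packages this as the single identity
\[
r_{k+1}^+(i)=\max\bigl(r_k^+(i+1)+1,\;r_{k+1}^+(i+1)-\xi_{(i,r_{k+1}^+(i+1))}\bigr),
\]
which already encodes the coalescence and parity facts you spell out.

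Two points to tighten. First, in your Case~2 you argue the maximiser ``coalesces with $r_k^+$'' because both are $\wh Y$-walks; for $k\ge1$ the curve $r_k^+$ is not globally a $\wh Y$-walk (it is forced up whenever it touches $r_{k-1}^+$), so ``steps with $r_k^+$ to $r_k^+(i-1)$'' is not quite right. This is harmless: all you actually need is that the maximiser, sitting at height $r_k^+(i)$, makes a $\pm1$ web step and hence lands at most at $r_k^+(i)+1$, which the fresh walk then matches.

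Second, your borderline claim $r_{k+1}^+(T_k)\ge r_k^+(T_k)+2$ is correct but deserves a line of justification. At the collision time the gap $r_k^+-r_k^-$ drops from at least $2$ to $0$ in one step, which forces $r_k^+(T_k)=r_k^+(T_k+1)-1$. Feeding this into either branch of \eqref{rk+1evolution} at $i=T_k+1$ gives $r_{k+1}^+(T_k)\ge r_k^+(T_k+1)+1=r_k^+(T_k)+2$, so the last entrant at $l=T_k-1$ indeed cannot overtake.
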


\begin{proof}[Proof of Proposition~\ref{prop:rchar}]
Given the path $r_k^+$ up to time $T_k$ let $\wt r_{k+1}^+$ denote the trajectory given by the evolution rules \eqref{rk+1evolution}--\eqref{rk+1evolution2}.
We prove that for any $k=0,1,2,\dots$ given $r_k^+$ it holds that $r_{k+1}^+(i)=\wt r_{k+1}^+(i)$ for all $i\le j$ by induction backwards on $i$.
The case $i=j$ is true by definition.
We assume that $r_{k+1}^+$ and $\wt r_{k+1}^+$ agree for $i+1,i+2,\dots,j$.
To see the equality $r_{k+1}^+(i)=\wt r_{k+1}^+(i)$ we rewrite \eqref{defr} by separating the $l=i$ term as
\begin{equation}\label{rk+1max}\begin{aligned}
r_{k+1}^+(i)&=\max\left(\wh Y_{(i,r_k^+(i+1)+1)}(i),\max_{l\in\{i+1,\dots,j\}}\wh Y_{(l,r_k^+(l+1)+1)}(i)\right)\\
&=\max\left(r_k^+(i+1)+1,r_{k+1}^+(i+1)-\xi_{(i,r_{k+1}^+(i+1))}\right).
\end{aligned}\end{equation}

Since $r_{k+1}^+\ge r_k^+$ always holds there are two possibilities: either $r_{k+1}^+(i+1)=r_k^+(i+1)$ or $r_{k+1}^+(i+1)>r_k^+(i+1)$.
If $r_{k+1}^+(i+1)=r_k^+(i+1)$, then $r_{k+1}^+(i)=r_k^+(i+1)+1$ since the first term in the maximum on the right-hand side of \eqref{rk+1max} cannot be smaller than the other one.
Also $\wt r_{k+1}^+(i)=r_{k+1}^+(i+1)+1=r_k^+(i+1)+1$ by \eqref{rk+1evolution} hence $r_{k+1}^+(i)=\wt r_{k+1}^+(i)$.
If $r_{k+1}^+(i+1)>r_k^+(i+1)$, then $r_{k+1}^+(i+1)\ge r_k^+(i+1)+2$, hence $r_{k+1}^+(i)=r_{k+1}^+(i+1)-\xi_{(i,r_{k+1}^+(i+1))}$ because the second term in the maximum on the right-hand side of \eqref{rk+1max} cannot be smaller than the first one.
On the other hand by \eqref{rk+1evolution} we have $\wt r_{k+1}^+(i)=r_{k+1}^+(i+1)-\xi_{(i,r_{k+1}^+(i+1))}$
which means that $r_{k+1}^+(i)=\wt r_{k+1}^+(i)$ as required for $i\ge T_k$.
The evolution beyond $T_k$ clearly follows that of the backward Brownian web given in \eqref{rk+1evolution2}.
\end{proof}

\begin{proof}[Proof of Proposition~\ref{prop:discreteSkorokhod}]
We prove the statement for the upper boundary curves $r_k^+$, the proof for $r_k^-$ is identical.
To see \eqref{rkSkorokhod} we proceed by induction on $k$.
Given $r_k^+$, the path $r_{k+1}^+$ clearly starts at $(j,v+1)$ as well as the right-hand side of \eqref{rkSkorokhod}.
Next we check that the right-hand side of \eqref{rkSkorokhod} satisfies the evolution rule \eqref{rk+1evolution} by induction on $i$ backwards.
For this we rewrite the right-hand side of \eqref{rkSkorokhod} by separating the $l=i$ case in the minimum to get that
\begin{equation}\label{sk+1computation}\begin{aligned}
&s_{k+1}(i)-\min_{l\in\{i,\dots,j\}}\left(s_{k+1}(l)-r_k^+(l+1)-1\right)\\
&\quad=s_{k+1}(i)-\min\left(s_{k+1}(i)-r_k^+(i+1)-1,\min_{l\in\{i+1,\dots,j\}}\left(s_{k+1}(l)-r_k^+(l+1)-1\right)\right)\\
&\quad=s_{k+1}(i)-\min\left(s_{k+1}(i)-r_k^+(i+1)-1,s_{k+1}(i+1)-r_{k+1}^+(i+1)\right)\\
&\quad=\max\left(r_k^+(i+1)+1,s_{k+1}(i)-s_{k+1}(i+1)+r_{k+1}^+(i+1)\right)
\end{aligned}\end{equation}
where the second equality above follows by the induction hypothesis \eqref{rkSkorokhod} with $i$ replaced by $i+1$.
The right-hand side of \eqref{sk+1computation} means an independent random walk step compared to $r_{k+1}^+(i+1)$ except for the case when $r_{k+1}^+(i+1)=r_k^+(i+1)$ in which case it is forced to jump up by one.
This corresponds to the evolution of $r_{k+1}^+(i)$ described in \eqref{rk+1evolution} hence it proves \eqref{rkSkorokhod}.

The proof of \eqref{rkwithLPP} follows an induction on $k$.
It clearly holds for $k=0$ and the induction step is based on \eqref{rkSkorokhod} which one can write as
\begin{equation}\begin{aligned}
r_k^+(i)&=\max_{l\in\{i,\dots,j\}}\left(s_k(i)-s_k(l)+r_{k-1}^+(l+1)+1\right)\\
&=\max_{l\in\{i,\dots,j\}}\Big(s_k(i)-s_k(l)\\
&\qquad+\max_{l+1\le l_{k-1}\le\dots\le l_{-1}=j-k+1}\sum_{m=0}^{k-1}\left(s_m(l_m)-s_m(l_{m-1})\right)+k+v+1\Big)
\end{aligned}\end{equation}
where we used the induction hypothesis \eqref{rkwithLPP} for $r_{k-1}^+(l+1)$ in the last equality above.
Then the right-hand side above is equal in distribution to that of \eqref{rkwithLPP} which can be seen by shifting all indices $l_{k-1},\dots,l_{-1}$ by $1$.
\end{proof}

\begin{proof}[Proof of Proposition~\ref{prop:discreteregions}]
We proceed by induction on $k$.
First we prove the statement for $k=0$.
Take any $i\in[T_0,j]$ and $m\in\Z$ so that $(i,m)\in\Z^2_\e$.
The forward random walk web path $Y_{(i,m)}$ cannot cross either of the backward paths
$r_0^+=\wh Y_{(j,v+1)}$ and $r_0^-=\wh Y_{(j,u-1)}$ on the interval $[i,j]$.
Hence if $r_0^-(i)<m<r_0^+(i)$, then $Y_{(i,m)}(j)\in I$ and $D^\Be(i,m;j,I)=0$.
If $m<r_0^-(i)$ or $m>r_0^+(i)$, then $Y_{(i,m)}(j)\not\in I$ and $D^\Be(i,m;j,I)>0$.
Also if $i<T_0$, then the forward random walk web path $Y_{(i,m)}$ avoids $j\times I$ and $D^\Be(i,m;j,I)>0$.

Assume that the statement is true for $k$, that is,
$D^\Be(i,m;j,I)\le k$ for points $(i,m)\in\Z^2_\e$ between $r_k^+$ and $r_k^-$ on $(T_k,j]$ and $D^\Be(i,m;j,I)>k$ for all other points $(i,m)\in\Z^2_\e$.
Now we prove it for $k+1$ by another induction on $i$ backwards in time as follows.
The statement of the proposition clearly holds for $k+1$ and $i=j$.
Indeed we have that $D^\Be(j,m;j,I)=0\le k+1$ exactly if $r_{k+1}^-(j)=u-1<m<r_{k+1}^+(j)=v+1$, otherwise $D^\Be(j,m;j,I)=\infty>k+1$.

Next we assume that the statement holds for $k+1$ and $i\in[T_k,j]$ and we prove it for $k+1$ and $i-1$ in what follows.
By assumption around the upper boundary the points $(i,m)\in\Z^2_\e$ with $r_k^+(i)<m<r_{k+1}^+(i)$ have $D^\Be(i,m;j,I)=k+1$.
We have two cases depending on weather there are such $m$s or not, that is $r_k^+(i)<r_{k+1}^+(i)$ or $r_k^+(i)=r_{k+1}^+(i)$.

We deal with the case $r_k^+(i)<r_{k+1}^+(i)$ first.
Then $D^\Be(i-1,m;j,I)\le k+1$ certainly holds for all points $(i-1,m)\in\Z^2_\e$ with $r_k^+(i-1)<m\le r_{k+1}^+(i)-2$
because these points $(i-1,m)$ are connected to either of $(i,m\pm1)$ by an edge in the random walk web $Y$ and $D^\Be(i,m\pm1;j,I)\le k+1$ by assumption.
If $\xi_{(i-1,r_{k+1}^+(i))}=-1$, then there is an edge in $Y$ from $(i-1,r_{k+1}^+(i))$ to $(i,r_{k+1}^+(i)-1)$,
hence $D^\Be(i-1,r_{k+1}^+(i);j,I)=D^\Be(i,r_{k+1}^+(i)-1)=k+1$ and $(i-1,r_{k+1}^+(i))\in R_{k+1}$.
If $\xi_{(i-1,r_{k+1}^+(i))}=1$, then there is an edge in $Y$ from $(i-1,r_{k+1}^+(i))$ to $(i,r_{k+1}^+(i)+1)$,
hence $D^\Be(i-1,r_{k+1}^+(i);j,I)=D^\Be(i,r_{k+1}^+(i)-1)+1=k+2$ and $(i-1,r_{k+1}^+(i))\not\in R_{k+1}$.
This means that the boundary of $R_{k+1}$ changes by $-\xi_{(i-1,r_{k+1}^+(i))}$ in the step $i\to i-1$
which is the same as the evolution of $r_{k+1}^+$ in the first case in \eqref{rk+1evolution}.

In the case $r_k^+(i)=r_{k+1}^+(i)$ regardless of the value of $\xi_{(i-1,r_k^+(i))}$ and whether the edge from $(i-1,r_k^+(i))$ to $(i,r_k^+(i)-1)$ is in $Y$ or not,
we can bound $D^\Be(i-1,r_k^+(i);j,I)\le D^\Be(i,r_k^+(i)-1;j,I)+1\le k+1$, hence $(i-1,r_k^+(i))\not\in R_{k+1}$.
This means that the boundary of $R_{k+1}$ increases by $1$ in this step
and it corresponds to the second case in \eqref{rk+1evolution} where $r_{k+1}^+(i-1)=r_{k+1}^+(i)+1$.

For $i\le T_k$, we know by assumption that $D^\Be(i,m;j,I)=k+1$ if $(i,m)\in\Z^2_\e$ with $r_{k+1}^+(i)<m<r_{k+1}^-(i)$
and $D^\Be(i,m;j,I)>k+1$ for all other values of $m$.
Hence the boundary of $R_{k+1}$ changes by $-\xi_{(i-1,r_{k+1}^\pm(i))}$
which means that the boundary follows the same backward random walk web trajectory as $r_{k+1}^\pm$, cf.~\eqref{rk+1evolution2}.
\end{proof}

\subsection{Further properties of the Brownian web distance}
\label{ss:furtherprop}

We prove Propositions~\ref{prop:DBraltern}, \ref{prop:DBrLSC}, \ref{prop:DBrfiniteness} and \ref{prop:DBrrational}
about further properties of the Brownian web distance in this subsection.

\begin{proof}[Proof of Proposition~\ref{prop:DBraltern}]
The function $D^\Br$ satisfies the triangle inequality and the distance of any point from itself is $0$, hence $D^\Br$ is a directed metric.
Since $D^\Br$ agrees with $d$ on the the set where $d$ is defined, the induced directed metric $\wt D^\Br$ exists and the inequality
\begin{equation}\label{DDtildeineq}
D^\Br(t,x;s,y)\le\wt D^\Br(t,x;s,y)
\end{equation}
holds for any $(t,x,s,y)\in\R^4$ by Definition~\ref{def:DBrtilde}.
Furthermore, if $(s,y)$ is on one of the outgoing paths started at $(t,x)$, then we also have by definition that
\begin{equation}\label{Ddineq}
\wt D^\Br(t,x;s,y)\le d(t,x;s,y).
\end{equation}

In order to show that \eqref{DDtildeineq} holds with an equality, we choose $(t,x,s,y)\in\R^4$ such that $D^\Br(t,x;s,y)=k$ for some finite integer $k$.
By Definition~\ref{def:DBr}, there are points $(t_1,x_1),\dots,(t_k,x_k)$
at which the optimal path from $(t,x)$ to $(s,y)$ switches between different Brownian web trajectories.
If $t_1>t_0=t$ then $(t_1,x_1)$ is on $B_{(t,x)}$ hence $d(t_0,x_0;t_1,x_1)=0$.
The distances between further points along the optimal path are
\begin{equation}\label{d=1}
d(t_1,x_1;t_2,x_2)=\dots=d(t_k,x_k;t_{k+1},x_{k+1})=1.
\end{equation}
By using the triangle inequality of $\wt D^\Br$ along the optimal path and \eqref{Ddineq}
we get that $\wt D^\Br(t,x;s,y)=\wt D^\Br(t_0,x_0;t_{k+1},x_{k+1})\le k$.
If $t_1=t_0=t$ then by the same argument we can conclude that $\wt D^\Br(t,x;s,y)=\wt D^\Br(t_1,x_1;t_{k+1},x_{k+1})\le k$
Hence in both cases $\wt D^\Br(t,x;s,y)\le k$ holds proving equality in \eqref{DDtildeineq}.
\end{proof}

\begin{proof}[Proof of Proposition~\ref{prop:DBrLSC}]
By definition of lower semicontinuity we have to show that for any convergent sequence $(t^{(n)},x^{(n)},s^{(n)},y^{(n)})\to(t,x,s,y)\in\R^4$
it holds that
\begin{equation}\label{DBrLSC}
\liminf_{n\to\infty}D^{\Br,\LSC}(t^{(n)},x^{(n)};s^{(n)},y^{(n)})\ge D^{\Br,\LSC}(t,x;s,y).
\end{equation}
We can assume that the left-hand side of \eqref{DBrLSC} is finite and let its value be equal to $k$.
By taking a subsequence we may assume that the limit on the left-hand side exists and it is equal to $k$.
This means that for all $n$ large enough there are $k+1$ paths $\pi_0^{(n)},\dots,\pi_k^{(n)}$ in the Brownian web $B$
along the geodesic between $(t^{(n)},x^{(n)})$ and $(s^{(n)},y^{(n)})$ with $k$ jumps.
Since $B$ as an element of $H$ is a compact collection of paths in $(\Pi,d)$,
the sequence $\pi_j^{(n)}$ must have a subsequential limit $\pi_j^{(\infty)}$ for all $j=0,\dots,k$.
One can find the limiting paths $\pi_j^{(\infty)}$ so that they realize a geodesic for $D^{\Br,\LSC}$ between $(t,x)$ and $(s,y)$,
that is, $\pi_j^{(\infty)}$ starts from a point on $\pi_{j-1}^{(\infty)}$ for all $j=1,\dots,k$.
This shows that $D^{\Br,\LSC}(t,x;s,y)\le k$ proving the lower semicontinuity \eqref{DBrLSC}.
\end{proof}

Next we prove Proposition~\ref{prop:DBrfiniteness} about the finiteness of the Brownian web distance.
We recall the Brownian last passage percolation with general boundary condition $f$ defined in \eqref{defBLPPwithboundary}.
By Lemma~\ref{lemma:BLPPSkorokhod}, the Brownian last passage percolation with boundary $f$ can be obtained as an iterated Skorokhod reflection of Brownian paths started from $f$.
Next we show that the Brownian last passage percolation diverges everywhere.

\begin{lemma}\label{lemma:BLPPtoinfty}
Let $f:\R_+\to\R$ be a fixed continuous function.
For the Brownian last passage percolation with boundary condition $f$ it holds that for any $t>0$ fixed
\begin{equation}\label{BLPPtoinfty}
\P\left(\lim_{n\to\infty}L^f(t,n)=+\infty\right)=1.
\end{equation}
\end{lemma}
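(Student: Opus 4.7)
The plan is to reduce the claim to a lower bound that completely bypasses the boundary function $f$, using the variational formula~\eqref{defBLPPwithboundary} with a deliberately suboptimal choice of path. Taking the feasible choice $t_1=0$ (so that $f(t_1)=f(0)$) and then maximizing over the remaining $t_2,\dots,t_{n-1}$ gives
\begin{equation}
L^f(t,n)\ge f(0)+L(0,2;t,n),
\end{equation}
so it suffices to show that $L(0,2;t,n)\to+\infty$ almost surely as $n\to\infty$.

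To estimate $L(0,2;t,n)$ from below I would pick, for each index $i^*\in\{2,\dots,n\}$, the degenerate path $t_j=0$ for $2\le j<i^*$ and $t_j=t$ for $j\ge i^*$. Along this path every increment $W_i(t_i)-W_i(t_{i-1})$ vanishes except the $i^*$-th, which equals $W_{i^*}(t)$. Optimizing over $i^*$ yields
\begin{equation}
L(0,2;t,n)\ge\max_{2\le i\le n}W_i(t).
\end{equation}

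The right-hand side is the maximum of $n-1$ independent $N(0,t)$ random variables. It is non-decreasing in $n$, and since $\P(W_i(t)>M)>0$ for every $M>0$, the second Borel--Cantelli lemma applied to the independent events $\{W_i(t)>M\}$ forces $\sup_i W_i(t)=+\infty$ almost surely, so the finite maxima diverge almost surely. Combining with the first display gives $\liminf_{n\to\infty}L^f(t,n)=+\infty$ almost surely, which is~\eqref{BLPPtoinfty}.

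There is essentially no real obstacle here; the argument is soft and avoids any appeal to the BLPP shape theorem or to the Skorokhod-reflection representation of Lemma~\ref{lemma:BLPPSkorokhod}. The only point worth flagging is that one must notice that a single Brownian coordinate $W_{i^*}(t)$ is already extractable as a lower bound, so the boundary condition $f$ enters only through the harmless additive constant $f(0)$.
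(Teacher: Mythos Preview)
Your proof is correct and follows essentially the same route as the paper's: both lower-bound $L^f(t,n)$ by (a constant plus) $\max_{2\le i\le n}W_i(t)$ via the same degenerate path choice, and then argue that this maximum of i.i.d.\ Gaussians diverges almost surely. The only cosmetic difference is that you invoke the second Borel--Cantelli lemma while the paper uses a truncation-plus-monotonicity argument; your explicit retention of the additive constant $f(0)$ is in fact slightly cleaner than the paper's displayed inequality, which silently drops it.
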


\begin{proof}
The right-hand side in the definition \eqref{defBLPPwithboundary} is lower bounded by
\begin{equation}
\sup_{0=t_0\le t_1\le\dots\le t_n=t}\left(f(t_1)+\sum_{i=2}^n(W_i(t_i)-W_i(t_{i-1}))\right)\ge\max_{i=2,\dots,n}(W_i(t))\ge\max_{i=2,\dots,n}(W_i(t)\wedge M)
\end{equation}
for any $M\in\R$.
Let $\varepsilon>0$ be arbitrary and note that
\begin{equation}
\P(\max_{i=2,\dots,n}(W_i(t)\wedge M)<M-\varepsilon)=\P(W_2(t)<M-\varepsilon)^n\to0
\end{equation}
as $n\to\infty$, that is $\max_{i=2,\dots,n}(W_i(t)\wedge M)$ converges to $M$ in distribution and in probability because the limit is a constant.
Since the sequence $\max_{i=2,\dots,n}(W_i(t)\wedge M)$ is non-decreasing in $n$, there is an almost sure convergence.
This implies by \eqref{defBLPPwithboundary} that $\liminf_{n\to\infty}L^f(t,n)\ge M$ almost surely for any $M\in\R$ which completes the proof.
\end{proof}

\begin{proof}[Proof of Proposition~\ref{prop:DBrfiniteness}]
Assume that $(s,y)$ is an interior point of the path $B_{(t,x)}$.
Then the point $(s,y)$ can be of type $(1,1)$ or $(2,1)$ in the forward Brownian web $B$ and of type $(0,2)$ or $(0,3)$ in the dual $\wh B$,
that is, there are two or three dual paths starting at $(s,y)$.

We consider the regions $\wt Q_k=\{(u,z):D^{\Br,\LSC}(t,x;s,y)\le k\}$ for $k=0,1,2,\dots$ which are the analogues of $Q_k$ defined in \eqref{defDBrI}
but with the interval $s\times I$ replaced by the single point $(s,y)$.
Let $\wt\rho_k^\pm$ denote the upper and lower boundaries of the regions $\wt Q_k$ which can be obtained similarly to $\rho_k^\pm$ that are the boundaries for $Q_k$,
see Proposition~\ref{prop:regions}.
The boundaries $\wt\rho_0^\pm$ on the interval $[t,s]$ are $\wh B_{(s,y)\pm}$, that is, the highest and lowest dual Brownian web paths starting at $(s,y)$.
The boundaries $\wt\rho_k^\pm$ for $k=1,2,\dots$ on $[t,s]$ can be given inductively and they are Brownian motions reflected off in the Skorokhod sense
upwards and downwards from $\wt\rho_{k-1}^\pm$.

By letting $f(v)=\wt\rho_0^+(s-v)$ for $v\in[0,s-t]$, Lemma~\ref{lemma:BLPPSkorokhod} implies that the distribution of $(\wt\rho_k^+(s-v),v\in[0,s-t])$
is equal to that of Brownian last passage percolation $(L^f(v,k),v\in[0,s-t])$ with boundary condition $f$.
By \eqref{BLPPtoinfty} in Lemma~\ref{lemma:BLPPtoinfty}, for any $u\in[t,s)$, $\wt\rho_k^+(u)\to\infty$ as $k\to\infty$ almost surely
and very similarly $\wt\rho_k^-(u)\to-\infty$.
Hence almost surely for any $(u,z)$ with $u\in[t,s)$ and $z\in\R$, we have $D^{\Br,\LSC}(u,z;s,y)<\infty$.
If $u<t$, one simply follows the Brownian web path $B_{(u,z)}$ until time $t$ to reach a point with finite Brownian distance to $(s,y)$.

In the case when $(s,y)$ is not the interior point of any Brownian web path, that is, it is not on the skeleton of the Brownian web $B$,
then there is no incoming trajectory of $B$ in this point, hence by Definition~\ref{def:DBr}, $D^{\Br,\LSC}(t,x;s,y)$ cannot be finite for any $(t,x)\in\R^2$.
\end{proof}

\begin{lemma}\label{lemma:LSCinterval}
Let $f:\R\to\R$ be a lower semicontinuous function.
We extend it to intervals $I\subset\R$ as $f(I)=\inf_{y\in I}f(y)$.
Then it holds for any $y\in\R$ that
\begin{equation}\label{LSCinterval}
f(y)=\sup_{I\subset\R:y\in I}f(I).
\end{equation}
\end{lemma}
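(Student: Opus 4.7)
The plan is to prove the two inequalities of \eqref{LSCinterval} separately, both being immediate once lower semicontinuity is invoked.

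For the easy direction, observe that for any interval $I\subset\R$ with $y\in I$ we have $f(I)=\inf_{z\in I}f(z)\le f(y)$, and taking the supremum over all such $I$ yields $\sup_{I\ni y}f(I)\le f(y)$.

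For the reverse inequality I would use the definition of lower semicontinuity at $y$: for any $\varepsilon>0$ there exists $\delta>0$ such that $f(z)>f(y)-\varepsilon$ for every $z\in(y-\delta,y+\delta)$. Taking $I=(y-\delta,y+\delta)$, which is an interval containing $y$, gives $f(I)=\inf_{z\in I}f(z)\ge f(y)-\varepsilon$, hence $\sup_{I\ni y}f(I)\ge f(y)-\varepsilon$. Letting $\varepsilon\downarrow 0$ completes the proof.

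There is no real obstacle here — the statement is essentially a reformulation of lower semicontinuity in terms of infima over intervals, and the two one-line arguments above suffice. The only thing worth a remark is that if one wished to restrict to non-degenerate intervals the same argument still works, since the shrinking family $(y-\delta,y+\delta)$ already produces the desired bound.
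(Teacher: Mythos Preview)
Your proof is correct and follows essentially the same approach as the paper: the trivial inequality $f(I)\le f(y)$ for $y\in I$ gives one direction, and lower semicontinuity at $y$ gives the other. The only cosmetic difference is that the paper phrases the second direction as a proof by contradiction (assuming strict inequality and producing a sequence $y_n\to y$ with $\liminf f(y_n)<f(y)$), whereas you invoke the $\varepsilon$--$\delta$ formulation of lower semicontinuity directly; your version is arguably a bit cleaner.
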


\begin{proof}[Proof of Lemma~\ref{lemma:LSCinterval}]
Fix $y\in\R$.
Then $f(y)\ge\sup_{I\subset\R:y\in I}f(I)$ clearly holds because $f(y)\ge f(I)$ if $y\in I$.
The strict inequality $f(y)>\sup_{I\subset\R:y\in I}f(I)$ would mean that there is an $\varepsilon>0$
such that $f(I)\le f(y)-\varepsilon$ for all $I$ with $y\in I$.
By letting $y_n$ be an approximate minimizer of $f$ in the interval $I_n$ so that $y\in I_n$ and $|I_n|\to0$,
one would get a sequence for which $\liminf_{y_n\to y}f(y_n)\le f(y)-\varepsilon/2<f(y)$ which contradicts the lower semicontinuity of $f$.
\end{proof}

\begin{proof}[Proof of Proposition~\ref{prop:DBrrational}]
We assume that the values of $D^\Br(t,x;s,I)$ are given for all rational $t,x,s,u,v$ where $I=[u,v]$.
We show in several steps that it uniquely extends to $\R^4$.
We proceed using the lower semicontinuous version $D^{\Br,\LSC}(t,x;s,y)$ which agrees with $D^\Br(t,x;s,y)$ for almost all starting points $(t,x)\in\R^2$.

First we prove that the values of $D^{\Br,\LSC}(t,x;s,I)$ are determined for all $t,x\in\R$ and for all rational $s,u,v$.
We fix a rational $s$ and an interval $I=[u,v]$ with rational endpoints.
By the characterization of regions with distance $k$ from a fixed $s$ and $I$ given in Proposition~\ref{prop:regions},
it holds that for any $t\le s$ the set
\begin{equation}\label{rhointerval}
\{x\in\R:D^{\Br,\LSC}(t,x;s,I)\le k\}=[\rho_k^-(t),\rho_k^+(t)]
\end{equation}
is an interval.
Since $D^\Br(t,x;s,y)=D^{\Br,\LSC}(t,x;s,y)$ for almost all $(t,x)$ we have that
\begin{equation}
\sup\{x\in\R:D^\Br(t,x;s,I)\le k\}=\sup\{x\in\R:D^{\Br,\LSC}(t,x;s,I)\le k\}=\rho_k^+(t)
\end{equation}
and a similar equation holds for $\rho_k^-(t)$.
This implies that knowing the values $D^\Br(t,x;s,I)$ for fixed $s$ and $I$ and for $(t,x)$ from a dense subset of $\R^2$
settles the continuous curves $\rho_k^\pm$ for all $k=0,1,\dots$ by \eqref{rhointerval}.
Hence the values of $D^{\Br,\LSC}(t,x;s,I)$ are determined for all $(t,x)\in\R^2$.

Second we show that the values of $D^{\Br,\LSC}(t,x;s,y)$ are determined for all $t,x,y\in\R$ and for all rational $s$.
It is a consequence of Proposition~\ref{prop:DBrLSC} that $y\mapsto D^{\Br,\LSC}(t,x;s,y)$ is lower semicontinuous for any $t,x,s$.
By applying Lemma~\ref{lemma:LSCinterval} it holds that
\begin{equation}\label{DBrassup}
D^{\Br,\LSC}(t,x;s,y)=\sup_{I\subset\R:y\in I}D^{\Br,\LSC}(t,x;s,I).
\end{equation}
The supremum on the right-hand side of \eqref{DBrassup} does not change
if we replace it with the supremum over all intervals $I$ with rational endpoints which contain $y$.
This proves that $D^{\Br,\LSC}(t,x;s,y)$ for $t,x,y\in\R$ and $s$ rational are determined.

Next we prove that $D^{\Br,\LSC}(t,x;s,y)$ are determined for all $(t,x;s,y)\in\R^4$.
We claim that for any $(t,x;s,y)\in\R^4$, $D^{\Br,\LSC}(t,x;s,y)\le k$ holds if and only if
there is an increasing sequence of rationals $s_n$ converging to $s$ and a sequence $y_n\to y$ such that $D^{\Br,\LSC}(t,x;s_n,y_n)\le k$.
The claim is seen as follows.

If $D^{\Br,\LSC}(t,x;s,y)\le k$, then there is a last jump between different Brownian web paths along the geodesic from $(t,x)$ to $(s,y)$
which we call $(t_k,x_k)$ for simplicity.
We choose any increasing sequence of rationals $s_n$ from the interval $(t_k,s)$ so that $s_n\to s$
and we let $y_n=B_{(t_k,x_k)}(s_n)$ to be the value of the Brownian web path along the geodesic at $s_n$.
Then $y_n\to y$ and $D^{\Br,\LSC}(t,x;s_n,y_n)\le k$ holds.
Conversely, if a sequence $(s_n,y_n)\to(s,y)$ exists with the desired properties,
then lower semicontinuity implies that
\begin{equation}
D^{\Br,\LSC}(t,x;s,y)\le\liminf_{n\to\infty}D^{\Br,\LSC}(t,x;s_n,y_n)\le k
\end{equation}
which proves the claim.
The claim implies that the values of $D^{\Br,\LSC}(t,x;s,y)$ for all $(t,x;s,y)\in\R^4$ can be determined.

Finally we argue that all values of $D^{\Br,\LSC}(t,x;s,y)$ determine those of $D^\Br(t,x;s,y)$.
The type of the point $(t,x)$ in the Brownian web can be obtained as follows.
The point $(t,x)$ has an incoming path if there is an $\varepsilon>0$ and $w\in\R$ such that $D^{\Br,\LSC}(t-\varepsilon,w;t,x)=0$.
The number of outgoing paths starting at $(t,x)$ is $k=1,2,3$
if there are $k$ different $z_1,\dots,z_k$ so that for any $\varepsilon>0$ small enough $D^{\Br,\LSC}(t,x;t+\varepsilon,z_i)=0$ holds for all $i=1,\dots,k$.
If $(t,x)$ is a type $(1,2)$ point then $B_{(t,x)}$ can be determined by the property that for some small $\varepsilon>0$ it holds that
$D^{\Br,\LSC}(t-\varepsilon,w;t+\varepsilon,z)=0$ for a point $(t-\varepsilon,w)$ on the incoming path to $(t,x)$ and with $(t+\varepsilon,z)$ on $B_{(t,x)}$.
This information is enough to determine $D^\Br(t,x;s,y)$, see Definitions~\ref{def:DBr} and \ref{def:DBrtilde}.
The proof is complete.
\end{proof}

\paragraph{Acknowledgments.}
We thank Dor Elboim and Ron Peled for a question leading to Theorem~\ref{thm:DBehorizontal}
and Julian Ransford for pointing out the short proof of Proposition~\ref{prop:DBrLSC}.
The work of B.\ Vet\H o was supported by the NKFI (National Research, Development and Innovation Office)
grants FK142124 and KKP144059 ``Fractal geometry and applications'',
by the Bolyai Research Scholarship of the Hungarian Academy of Sciences
and by the \'UNKP--22--5--BME--250 New National Excellence Program of the Ministry for Innovation and Technology
from the source of the NKFI.
The work of B.~Vir\'ag was supported by the NSERC Discovery Grant.

\bibliography{bernoulli_exp}
\bibliographystyle{alpha}
\end{document}